\newtheorem{theorem}{Theorem}[section]
\newtheorem{lemma}{Lemma}[section]
\newtheorem{prop}{Proposition}[section]
\newtheorem{coro}{Corollary}[section]
\numberwithin{equation}{section}
\newcommand{\spa}[1]{\mathcal{#1}}
\newcommand{\op}{\pi}
\newcommand{\cov}{\Sigma}
\newcommand{\m}{\mu}
\newcommand{\n}{\nu}
\newcommand{\reel}{\mathbb{R}}
\newcommand{\expe}{\mathbb{E}}
\newcommand{\mroot}[1]{#1^{\frac{1}{2}}}
\newcommand{\imroot}[1]{#1^{-\frac{1}{2}}}
\title{Gromov-Wasserstein Distances between Gaussian Distributions}
\providecommand{\keywords}[1]
{
  \small	
  \textbf{\textit{Keywords---}} #1
}
\author{Antoine Salmona$^1$,  Julie Delon$^{2}$, Agnès Desolneux$^1$ \thanks{The authors acknowledge support from the French Research Agency through the PostProdLEAP project (ANR-19-CE23-0027-01) and the MISTIC project (ANR-19-CE40-005).}}
\date{%
    $^1$ ENS Paris-Saclay, CNRS, Centre Borelli UMR 9010\\%
    $^2$ Universit\'e de Paris, CNRS, MAP5 UMR 8145 and Institut Universitaire de France \\[2ex]%
    \today
}
\begin{document}
\maketitle
\begin{abstract}
\noindent The Gromov-Wasserstein distances were proposed a few years ago to compare distributions which do not lie in the same space. In particular, they offer an interesting alternative to the Wasserstein distances for comparing probability measures  living on Euclidean spaces of different dimensions. In this paper, we focus on the Gromov-Wasserstein distance with a ground cost defined as the squared Euclidean distance and we study the form of the optimal plan between Gaussian distributions. We show that when the optimal plan is  restricted to Gaussian distributions, the problem has a very simple linear solution, which is also solution of the linear Gromov-Monge problem. 
We also study the problem without restriction on the optimal plan, and provide lower and upper bounds for the value of the Gromov-Wasserstein distance between Gaussian distributions.    
\newline
\newline
\keywords{optimal transport, Wasserstein distance, Gromov-Wasserstein distance, Gaussian distributions.\\
{MSC 2020 subject classifications} : 60E99, 68T09, 62H25, 49Q22. }
\end{abstract}

\section{Introduction}
Optimal transport (OT) theory has become nowadays a major tool to compare probability distri\-butions.
It has been increasingly used over the last past years in various applied fields such as  economy~\cite{galichon2014stochastic}, image processing~\cite{color,textures},  machine learning~\cite{bigot2017geodesic,blanchet2019robust} or more generally data science \cite{OT}, with applications to domain adaptation \cite{courty2016optimal} or generative models \cite{arjovsky2017wasserstein,genevay2017learning}, to name just a few. 

Given two probability distributions $ \mu $ and $ \nu $ on two Polish spaces $ (\spa{X},d_{\spa{X}}) $ and $ (\spa{Y},d_{\spa{Y}}) $ and a positive lower semi-continuous cost function 
$ c: \mathcal{X} \times \mathcal{Y} \shortrightarrow \mathbb{R}^+ $,  optimal transport focuses on solving the following optimization problem
\begin{equation}\label{eq:generalOT}
\inf_{\op \in \Pi(\m,\n) } \int_{\spa{X} \times \spa{Y} } c(x,y)d\op(x,y),
\end{equation}
where $ \Pi(\m,\n)  $ is the set of measures on $ \spa{X} \times \spa{Y} $ with marginals $ \m $ and $ \n $. 
When $ \spa{X} $ and $ \spa{Y} $ are equal and Euclidean, typically $ \reel^d $, and $ c(x,y) = \| x - y \|^p $ with $ p \geq 1 $, 
Equation \eqref{eq:generalOT} induces a distance over the set of measures with finite moment of order $ p $, known as the $p$-Wasserstein distance $ W_p $:
\begin{equation}
W_p(\m,\n) = \left(\inf_{\op \in \Pi(\m,\n) } \int_{\reel^d \times \reel^d} \|x - y \|^pd\op(x,y)\right)^{\frac{1}{p}},
\end{equation}
or equivalently
\begin{equation}
W^p_p(\m,\n) = \inf_{X \sim \m, Y \sim \n} \expe[\|X-Y\|^p],
\end{equation}
where the notation $ X \sim \m $ means that $ X $ is a random variable with probability distribution $ \m $. 
It is known that Equation \eqref{eq:generalOT} always admits a solution \cite{reftheo1,reftheo2,reftheo5} , 
i.e. the infinimum is always reached. Moreover, in the case of  $ W_2 $, it is known~\cite{brenier} that if $ \m $ is absolutely continuous, 
then the \emph{optimal transport plan} $ \op^* $ is unique and has the form $ \op^* = (Id,T)\#\m $ where $ \# $ is the push-forward operator 
and $ T: \reel^d \shortrightarrow \reel^d $ is an application called \emph{optimal transport map}, satisfying $ T\#\m = \n $. 
The 2-Wasserstein distance $ W_2 $ admits a closed-form expression~\cite{dowson1982frechet, takatsu2010wasserstein}  when $ \m = \mathcal{N}(m_0,\cov_0) $ and $ \n = \mathcal{N}(m_1,\cov_1) $ are two Gaussian measures with means
$ m_0 \in \reel^d $, $ m_1 \in \reel^d $ and covariance matrices $ \cov_0 \in \reel^{d \times d} $ and $ \cov_1 \in \reel^{d \times d} $, that is given by
\begin{equation}\label{eq:w2gaussian}
W_2^2(\mu,\nu) = \|m_1 - m_0 \|^2 + \text{tr}\left(\cov_0 + \cov_1 - 2\mroot{\left(\mroot{\cov_0}\cov_1\mroot{\cov_0}\right)}\right),
\end{equation} 
where for any symmetric semi-definite positive $M $, $ \mroot{M} $ is the unique symmetric semi-definite positive squared root of $ M $. 
Moreover, if $ \cov_0 $ is non-singular, then the optimal transport map $ T $ is affine and is given by
\begin{equation}\label{eq:mongemapgaussian}
\forall x \in \reel^d, \ T(x) = m_1 + \imroot{\cov_0}\mroot{\left(\mroot{\cov_0}\cov_1\mroot{\cov_0}\right)}\imroot{\cov_0}(x-m_0),
\end{equation}
and the corresponding optimal transport plan $ \op^*$ is a degenerate Gaussian measure. 
 
For some applications such as shape matching or word embedding, an important limitation of classic OT lies 
in the fact that it is not invariant to rotations and translations and more generally to \emph{isometries}. 
Moreover, OT implies that we can define a relevant cost function to compare spaces $ \spa{X} $  and $ \spa{Y} $. Thus, when for instance $ \m $ is a measure on $ \reel^2 $ and $ \n $ a measure on $  \reel^3 $, 
it is not straightforward to design a cost function $ c: \reel^2 \times \reel^3 \shortrightarrow \reel $ and so one cannot define easily an OT distance 
to compare $ \m $ with $ \n $.  To overcome these limitations, several extensions of OT have been proposed \cite{invariance,dimension,temd}. 
Among them, the most famous one is probably the Gromov-Wasserstein  (GW) problem \cite{memoli}: given two Polish spaces $ (\spa{X},d_{\spa{X}}) $ 
and $ (\spa{Y},d_{\spa{Y}}) $, each endowed respectively with probability measures $\m $ and $ \n $,  
and given two measurable functions $ c_{\spa{X}} : \spa{X} \times \spa{X} \shortrightarrow \reel $ and  $ c_{\spa{Y}} : \spa{Y} \times \spa{Y} \shortrightarrow \reel $ , it aims at finding
\begin{equation}\label{eq:generalgromov}
GW_p(c_{\spa{X}},c_{\spa{Y}},\m,\n) =  \left( \inf_{\op \in \Pi(\m,\n)}  \int_{\spa{X}^2 \times \spa{Y}^2 } | c_{\spa{X}}(x,x') - c_{\spa{Y}}(y,y')|^p d\op(x,y)d\op(x',y') \right)^{\frac{1}{p}},
\end{equation}
with $p \geq 1$. As for classic OT, it can be shown that Equation \eqref{eq:generalgromov} always admits a solution (see \cite{These}).
The GW problem can be seen as a quadratic optimization problem in $ \op $, as opposed to OT, which is a linear optimization problem in $ \op $.
It induces a distance over the space of \emph{metric measure spaces} (i.e. the triplets $ (\spa{X}, d_{\spa{X}}, \m) $) quotiented by 
the \emph{strong isomorphisms} \cite{OT} \footnote{We say that $ (\spa{X}, d_{\spa{X}}, \m) $ is strongly isomorphic to $ (\spa{Y}, d_{\spa{Y}}, \n) $
if it exists a bijection $ \phi: \spa{X} \shortrightarrow \spa{Y} $ such that $ \phi $ is an isometry
($d_{\spa{Y}}(\phi(y),\phi(y')) = d_{\spa{X}}(x,x') $), and $ \phi \# \m = \n $.}. The fundamental metric properties of $GW_p $ have been studied 
in depth in \cite{reftheo3,memoli,reftheo4}. In the Euclidean setting, when $ \spa{X} = \reel^m $, $ \spa{Y} = \reel^n $, with $ m $ not
necessarily being equal to $ n $, and for the natural choice of costs $ c_{\spa{X}} = \|.\|^2_{\reel^m} $ and $ c_{\spa{Y}} = \|.\|^2_{\reel^n}  $, 
where $ \|.\|_{\reel^m} $ means the Euclidean norm on $ \reel^m $, it can be easily shown that $ GW_2(c_{\spa{X}},c_{\spa{Y}},\m,\n) $ 
is invariant to isometries. With a slight abuse of notations, we will note in the following $ GW_2(\m,\n) $ instead of $ GW_2(\|.\|_{\reel^m}^2,\|.\|_{\reel^n}^2,\m,\n)$. 

In this work, we focus on the problem of Gromov-Wasserstein between Gaussian measures. Given $ \m = \mathcal{N}(m_0,\cov_0) $,
 with $ m_0 \in \reel^m $ and with covariance matrix $ \cov_0 \in \reel^{m \times m}  $, and $ \n = \mathcal{N}(m_1,\cov_1) $, 
 with $ m_1 \in \reel^n $ and with covariance matrix $ \cov_1 \in \reel^{n \times n} $, we aim to solve
\begin{equation}\label{eq:gromovgeneral1}\tag{GW}
GW_2^2(\m,\n) = \inf_{\op \in \Pi(\m,\n)} \int \int \left(\|x - x'\|^2_{\reel^m} - \|y - y'\|^2_{\reel^n}\right)^2d\op(x,y)d\op(x',y'),
\end{equation}
or equivalently
\begin{equation}\label{eq:gromovgeneral}
GW_2^2(\m,\n) = \inf_{X,X',Y,Y' \sim  \op \otimes \op} \expe \left[\left(\|X - X'\|_{\reel^m}^2 - \|Y - Y'\|_{\reel^n}^2\right)^2 \right],
\end{equation}
where for $ x,x' \in \reel^m $ and $ y,y' \in \reel^n $, $ (\op \otimes \op)(x,x',y,y') = \op(x,y)\op(x',y') $. In particular, can we find equivalent formulas  to \eqref{eq:w2gaussian} and \eqref{eq:mongemapgaussian} in the case of Gromov-Wasserstein?
In Section~\ref{sec:2}, we derive an equivalent formulation of the Gromov-Wasserstein problem. This formulation is not specific to Gaussian measures 
but to all measures with finite order $4$ moment. It takes the form of a sum of two terms depending respectively on co-moments of 
order $2$ and $4$ of $ \op $. Then in Section \ref{sec:3}, we derive a lower bound by simply optimizing both terms separately. In Section \ref{sec:4}, we show that the 
problem restricted to Gaussian optimal plans admits an explicit solution and this solution is closely related to Principal Components Analysis (PCA). In Section \ref{sec:5}, we study the tightness of
the bounds found in the previous sections and we exhibit a particular case where we are able to compute exactly the value of $ GW_2^2(\m,\n) $ and the optimal plan $ \op^* $ which achieves it. Finally,  Section~\ref{sec:6} discusses the form of the solution in the general case, and the possibility that the optimal plan between two Gaussian distributions is always Gaussian. 

\section*{Notations}
We define in the following some of the notations that will be used in the paper. 
\begin{itemize}
\item The notation $  Y \sim \m $ means that $ Y $ is a random variable with probability distribution $ \m $.
\item If $ \m $ is a positive measure on $ \spa{X} $ and $ T:\spa{X} \rightarrow \spa{Y} $ is an application,
$ T\#\m $ stands for the push-forward measure of $ \m $ by $ T$, i.e. the measure on $ \spa{Y} $ such that  $ \forall A \in \spa{Y}$, $(T\#\m)(A) =\m(T^{-1}(A)) $.
\item If $ X $ and $ Y $  are random vectors on $ \reel^m $ and $ \reel^n $, we denote 
$ \text{Cov}(X,Y) $ the matrix of size $ m \times n $ of the form  $  \expe \left[(X - \expe[X])(Y - \expe[Y])^T \right] $.
\item the notation $ \text{tr}(M) $ denotes the trace of a matrix $ M $. 
\item $ \|M\|_{\spa{F}} $ stands for the Frobenius norm of a matrix $ M $, i.e. $ \|M\|_{\spa{F}} = \sqrt{\text{tr}(M^TM)} $. 
\item $ \text{rk}(M) $ stands for the rank of a matrix $M $.
\item $ I_n $ is the identity matrix of size $ n $. 
\item $\tilde{I}_n $ stands for any matrix of size $ n $ of the form $ \textup{diag}((\pm 1)_{i\leq n}) $
\item Suppose $ n \leq m $. For $ A \in \reel^{m \times m}$, we denote $ A^{(n)} \in \reel^{n \times n} $ the submatrix containing the $ n $ first rows and
the $ n $ first columns of $ A $.
\item Suppose $ n \leq m $. For $ A \in \reel^{n \times n}$, we denote $ A^{[m]} \in \reel^{m \times m} $ the matrix of the form $  \begin{pmatrix} A & 0 \\ 0 & 0 \end{pmatrix} $.
\item We denote $ S_n(\mathbb{R}) $ the set of symmetric matrices of size $ n $, $ S^+_n(\mathbb{R}) $ 
the set of  semi-definite  positive matrices, and $ S^{++}_n(\mathbb{R}) $ the set of definite positive matrices. 
\item $ \mathbb{1}_{n,m} = (1)_{i \leq n, j \leq m} $ denotes the matrix of ones with $ n $ rows and $ m $ columns. 
\item $ \|x\|_{\reel^n} $ stands for the Euclidean norm of $ x \in \reel^n $. We will denote $ \|x\|$ when there is no ambiguity about the dimension.
\item $ \langle x,x' \rangle_n $ stands for the Euclidean inner product in $ \reel^n $ between $ x $ and $ x' $.
\end{itemize} 

\section{Derivation of the general problem}\label{sec:2}
In this section, we derive an equivalent \footnote{We say that two optimization problems are equivalent if the solutions of one are readily obtained from the solutions of the other, and vice-versa.} formulation of problem  \eqref{eq:gromovgeneral1} which takes the form of a functional of co-moments of order $ 2 $ 
and $4$ of $ \op $. This formulation is not specific to Gaussian measures but to all measures with finite $ 4^{\text{th}} $ order moment. 

\begin{theorem}\label{thm:gromovcov}
Let $ \m $ be a probability measure on $ \reel^m $ with mean vector $ m_0  \in \reel^m $ and covariance matrix $ \cov_0 \in \reel^{m \times m} $ 
such that $ \int \|x\|^4d\m < + \infty $ and $ \n $ a probability measure on $ \reel^n $ with mean vector $ m_1 $ and 
covariance matrix $ \cov_1 \in \reel^{n \times n} $ such that $ \int \|y\|^4d\n < + \infty $. Let $ P_0, D_0 $  and $ P_1 , D_1 $ be 
respective diagonalizations of $ \cov_0 (= P_0D_0P_0^T)$ and $ \cov_1 (=P_1D_1P_1^T) $.
Let us define $ T_0 : x \in \reel^m \mapsto P_0^T(x - m_0)  $ and $ T_1 : y \in \reel^n \mapsto P_1^T(y - m_1)$. 
Then problem \eqref{eq:gromovgeneral1} is equivalent to problem
\begin{equation}\label{eq:gromovcov}\tag{supCOV}
\sup_{X \sim T_0\#\m, Y \sim T_1\#\n} \sum\limits_{i,j}\textup{Cov}(X_i^2,Y_j^2) + 2\|\textup{Cov}(X,Y)\|^2_{\mathcal{F}},
\end{equation}
where  $ X =  (X_1,X_2,\dots,X_m)^T $, $ Y = (Y_1,Y_2,\dots,Y_n)^T $, and $ \|.\|_{\mathcal{F}} $ is the Frobenius norm.
\end{theorem}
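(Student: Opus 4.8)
The plan is to strip the means using the isometry invariance of $GW_2$, then expand the quartic integrand and isolate the single term that actually depends on the coupling.

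First I would note that $GW_2^2(\m,\n)=GW_2^2(T_0\#\m,T_1\#\n)$. Since $T_0:x\mapsto P_0^T(x-m_0)$ and $T_1:y\mapsto P_1^T(y-m_1)$ are each the composition of a translation with an orthogonal map, we have $\|T_0(x)-T_0(x')\|=\|P_0^T(x-x')\|=\|x-x'\|$ and likewise for $T_1$. Hence $\op\mapsto(T_0,T_1)\#\op$ is a bijection from $\Pi(\m,\n)$ onto $\Pi(T_0\#\m,T_1\#\n)$ that leaves the integrand of \eqref{eq:gromovgeneral1} pointwise unchanged, so the two infima coincide and their optimal plans correspond. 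From here on I may therefore assume $X\sim T_0\#\m$ and $Y\sim T_1\#\n$ are \emph{centered}, with diagonal covariances $D_0$ and $D_1$.

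Next I would expand the square. With $A=\|X-X'\|^2$ and $B=\|Y-Y'\|^2$ for two independent copies $(X,Y),(X',Y')$ drawn from $\op\otimes\op$, the objective of \eqref{eq:gromovgeneral1} reads $\expe[A^2]-2\expe[AB]+\expe[B^2]$. The point is that under the product coupling $(X,X')$ is always a pair of independent draws from the first marginal, regardless of $\op$ (and symmetrically for $(Y,Y')$); thus $\expe[A^2]$ and $\expe[B^2]$ depend only on $\m$ and $\n$ and are constants of the problem. Minimizing \eqref{eq:gromovgeneral1} is therefore equivalent to \emph{maximizing} the cross term $\expe[AB]=\expe[\|X-X'\|^2\|Y-Y'\|^2]$.

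The computational core is to expand this coordinatewise as $\expe[AB]=\sum_{i,j}\expe[(X_i-X_i')^2(Y_j-Y_j')^2]$. Using independence of the two copies, their common law, and the centering $\expe[X_i]=\expe[Y_j]=0$, every monomial in which one of the two copies contributes a lone coordinate to the first power vanishes, leaving
\[
\expe[(X_i-X_i')^2(Y_j-Y_j')^2]=2\,\expe[X_i^2Y_j^2]+2\,\expe[X_i^2]\expe[Y_j^2]+4\,\expe[X_iY_j]^2.
\]
Writing $\expe[X_i^2Y_j^2]=\textup{Cov}(X_i^2,Y_j^2)+\expe[X_i^2]\expe[Y_j^2]$ and $\expe[X_iY_j]=\textup{Cov}(X,Y)_{ij}$ and summing over $i,j$, the variance products collapse into the coupling-independent constant $4\,\textup{tr}(\cov_0)\textup{tr}(\cov_1)=4\big(\sum_i\expe[X_i^2]\big)\big(\sum_j\expe[Y_j^2]\big)$, and I obtain
\[
\expe[AB]=2\sum_{i,j}\textup{Cov}(X_i^2,Y_j^2)+4\|\textup{Cov}(X,Y)\|_{\mathcal{F}}^2+4\,\textup{tr}(\cov_0)\textup{tr}(\cov_1).
\]

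Finally, since the positive factor and the additive constant do not affect the set of maximizers, maximizing $\expe[AB]$ over $\Pi(T_0\#\m,T_1\#\n)$ is equivalent to maximizing $\sum_{i,j}\textup{Cov}(X_i^2,Y_j^2)+2\|\textup{Cov}(X,Y)\|_{\mathcal{F}}^2$, which is precisely problem \eqref{eq:gromovcov}. I expect the main obstacle to be the bookkeeping in the fourth-order expansion — tracking which of the nine cross terms survive and with what coefficient — together with the clean justification that $\expe[A^2]$, $\expe[B^2]$ and $\sum_{i,j}\expe[X_i^2]\expe[Y_j^2]$ are genuinely coupling-independent; the centering by $T_0,T_1$ is what makes the squared-coordinate covariances meaningful, while the rotation to diagonal form is merely a convenient normalization for the subsequent sections.
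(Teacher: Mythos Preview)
Your argument is correct and follows the same overall arc as the paper: reduce by isometry invariance to centered marginals with diagonal covariances, then show that the only coupling-dependent piece of the $GW_2$ objective is the cross term, which you rewrite as \eqref{eq:gromovcov}. The one genuine difference is in how the cross term is extracted. The paper invokes Lemma~\ref{lemme:vayer} (Vayer's general decomposition for costs $a\|x\|^2+b\|x'\|^2+c\langle x,x'\rangle$), specializes the parameters, and then observes that the third summand of $Z(\pi)$ vanishes by centering; you instead expand $\expe[\|X-X'\|^2\|Y-Y'\|^2]$ coordinatewise from scratch. Your route is more self-contained and makes the coefficients $2$ and $4$ visibly emerge from the nine-term expansion; the paper's route is shorter once the external lemma is in hand and has the side benefit that the same lemma, with different parameters, immediately handles the inner-product cost \eqref{eq:innergw} later in Section~\ref{sec:4} without redoing any calculus.
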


This theorem is a direct consequence of the two following intermediary results.
 
\begin{restatable}{lemma}{isometries}\label{lemme:isometries}
We denote $ \mathbb{O}_m = \{O \in \reel^{m \times m} |  \ O^TO = I_m \} $ the set of orthogonal matrices of size $ m $. 
Let $ \m $ and $ \n $ be two probability measures on $ \reel^m $ and $ \reel^n $. Let $ T_m : x \mapsto O_mx + x_m $ 
and $ T_n: y \mapsto O_ny + y_n $ be two affine applications with $ x_m \in \reel^m $, $ O_m \in \mathbb{O}_m $, $ y_n \in \reel^n$, 
and $ O_n \in \mathbb{O}_n $. Then $ GW_2(T_m\#\m,T_n\#\n) = GW_2(\m,\n) $.
\end{restatable}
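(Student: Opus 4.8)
The plan is to exploit the fact that the integrand in \eqref{eq:gromovgeneral1} depends on the two configurations only through the squared distances $\|x-x'\|^2_{\reel^m}$ and $\|y-y'\|^2_{\reel^n}$, and that these quantities are preserved by the affine isometries $T_m$ and $T_n$. Concretely, I would construct an explicit cost-preserving bijection between the admissible couplings $\Pi(\m,\n)$ and $\Pi(T_m\#\m,T_n\#\n)$, and then conclude that the two infima defining the Gromov-Wasserstein distances coincide.

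First I would record the elementary observation that $T_m$ and $T_n$ act trivially on squared distances. Since $O_m^TO_m = I_m$,
\begin{equation*}
\|T_m(x) - T_m(x')\|^2_{\reel^m} = \|O_m(x-x')\|^2_{\reel^m} = (x-x')^T O_m^T O_m (x-x') = \|x-x'\|^2_{\reel^m},
\end{equation*}
and by the same computation with $O_n^TO_n = I_n$ one gets $\|T_n(y)-T_n(y')\|^2_{\reel^n} = \|y-y'\|^2_{\reel^n}$. The translations play no role here because only differences of points appear.

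Next I would set up the bijection between plans. Let $(T_m,T_n)$ denote the map $(x,y)\mapsto (T_m(x),T_n(y))$ and define $\Phi : \op \mapsto (T_m,T_n)\#\op$. If $\op \in \Pi(\m,\n)$, then the first and second marginals of $\Phi(\op)$ are exactly $T_m\#\m$ and $T_n\#\n$, so $\Phi$ maps $\Pi(\m,\n)$ into $\Pi(T_m\#\m,T_n\#\n)$. Because $T_m$ and $T_n$ are affine bijections, $\Phi$ is a bijection whose inverse is $\tilde{\op}\mapsto (T_m^{-1},T_n^{-1})\#\tilde{\op}$. I would then check that $\Phi$ leaves the Gromov-Wasserstein cost invariant: writing the objective of \eqref{eq:gromovgeneral1} for $\Phi(\op)$ and changing variables via $(T_m,T_n)$, the isometry identities from the previous step give
\begin{equation*}
\begin{aligned}
&\int\int \left(\|T_m(x)-T_m(x')\|^2_{\reel^m} - \|T_n(y)-T_n(y')\|^2_{\reel^n}\right)^2 d\op(x,y)\,d\op(x',y') \\
&\qquad = \int\int \left(\|x-x'\|^2_{\reel^m} - \|y-y'\|^2_{\reel^n}\right)^2 d\op(x,y)\,d\op(x',y').
\end{aligned}
\end{equation*}

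Finally, since $\Phi$ is a cost-preserving bijection between $\Pi(\m,\n)$ and $\Pi(T_m\#\m,T_n\#\n)$, the infimum of the cost over the first set equals the infimum over the second, which is precisely $GW_2^2(\m,\n) = GW_2^2(T_m\#\m,T_n\#\n)$, and taking square roots yields the claim. There is no genuine obstacle in this argument; the only point deserving a little care is verifying that $\Phi$ preserves the marginal constraints, which follows directly from the definition of the push-forward and the fact that push-forward commutes with marginalization.
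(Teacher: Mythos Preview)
Your proof is correct and complete: the bijection $\Phi:\op\mapsto(T_m,T_n)\#\op$ between $\Pi(\m,\n)$ and $\Pi(T_m\#\m,T_n\#\n)$ preserves the Gromov--Wasserstein cost because the integrand depends only on squared distances, which are invariant under the affine isometries $T_m$ and $T_n$. The paper itself does not supply a proof of this lemma---it is stated as an elementary fact and used directly in the proof of Theorem~\ref{thm:gromovcov}---so your argument is exactly the kind of routine verification the authors presumably had in mind.
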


\begin{restatable}[\textbf{\textup{Vayer, 2020, \cite{These}}}]{lemma}{vayer}
\label{lemme:vayer}
Suppose there exist some scalars $ a, b, c $ such that $ c_{\spa{X}}(x,x') = a\|x\|_{\reel^m}^2 + b\|x'\|_{\reel^m}^2 + c\langle x, x' \rangle_m  $,
where $ \langle .,. \rangle_m $ denotes the inner product on $ \reel^m $, and 
$ c_{\spa{Y}}(y,y') = a\|y\|_{\reel^n}^2 + b\|y'\|_{\reel^n}^2 + c\langle y, y' \rangle_n $. 
Let $\m$ and $\n$ be two probability measures respectively on $ \reel^m $ and $ \reel^n $.
Then
\begin{equation}\label{eq:vayer}
GW^2_2(c_{\spa{X}},c_{\spa{Y}},\m,\n) = C_{\m,\n} - 2\sup_{\op \in \Pi(\m,\n)} Z(\op),
\end{equation}
where $ C_{\m,\n} = \int c^2_{\spa{X}}d\m d\m + \int c^2_{\spa{Y}}d\n d\n - 4ab\int\|x\|_{\reel^m}^2\|y\|_{\reel^n}^2d\m d\n $ and
\begin{equation}\label{eq:Z}
\begin{split}
Z(\op) &= (a^2 + b^2)\int\|x\|_{\reel^m}^2\|y\|_{\reel^n}^2d\op(x,y) + c^2 \left\| \int xy^Td\op(x,y) \right\|_{\mathcal{F}}^2 \\
&+ (a +b)c\int \left( \|x\|_{\reel^m}^2\langle \mathbb{E}_{Y \sim \n}[Y],y \rangle_n + \|y\|_{\reel^n}^2\langle \mathbb{E}_{X \sim \m}[X],x \rangle_m \right)d\op(x,y).
\end{split}
\end{equation}
\end{restatable}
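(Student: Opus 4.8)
The plan is to start directly from the quadratic objective in \eqref{eq:generalgromov} and expand the squared integrand $\left(c_{\spa{X}}(x,x') - c_{\spa{Y}}(y,y')\right)^2 = c_{\spa{X}}(x,x')^2 - 2\,c_{\spa{X}}(x,x')c_{\spa{Y}}(y,y') + c_{\spa{Y}}(y,y')^2$, then integrate each piece against $d\op(x,y)\,d\op(x',y')$. The two square terms never see $\op$ in a nontrivial way: since $c_{\spa{X}}$ depends only on $(x,x')$, integrating out $(y,y')$ uses only the first marginal and gives $\int c_{\spa{X}}^2\,d\m\,d\m$, and symmetrically $\int c_{\spa{Y}}^2\,d\n\,d\n$. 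These are exactly the first two contributions to $C_{\m,\n}$ and are constant over $\Pi(\m,\n)$. All the $\op$-dependence is therefore concentrated in the cross term $-2\int\int c_{\spa{X}}c_{\spa{Y}}\,d\op\,d\op$.

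The core of the argument is to decompose this cross term. First I would expand the product $c_{\spa{X}}(x,x')c_{\spa{Y}}(y,y')$ using the assumed form of the costs, obtaining the nine monomials coming from $(a\|x\|_{\reel^m}^2 + b\|x'\|_{\reel^m}^2 + c\langle x,x'\rangle_m)(a\|y\|_{\reel^n}^2 + b\|y'\|_{\reel^n}^2 + c\langle y,y'\rangle_n)$. The key observation is how each monomial couples the two independent draws $(x,y)$ and $(x',y')$ under $\op\otimes\op$: whenever a factor lives entirely in one copy I integrate it using Fubini, and whenever a factor reduces to a single coordinate I collapse it to a marginal moment via $\int x_i\,d\op = \expe_\m[X_i]$ and $\int y_j\,d\op = \expe_\n[Y_j]$.

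Grouping the nine terms then reproduces $Z(\op)$ plus a constant. The two terms with both squared-norm factors in the same copy, carrying coefficients $a^2$ and $b^2$, give $(a^2+b^2)\int\|x\|_{\reel^m}^2\|y\|_{\reel^n}^2\,d\op$. The inner-product times inner-product term, after writing $\langle x,x'\rangle_m\langle y,y'\rangle_n = \sum_{i,j} x_ix_i'y_jy_j'$ and factoring the two copies, yields $c^2\sum_{i,j}\left(\int x_iy_j\,d\op\right)^2 = c^2\left\|\int xy^T\,d\op\right\|_{\mathcal{F}}^2$. The four mixed squared-norm times inner-product terms each factor one copy down to a marginal mean and assemble into $(a+b)c\int\left(\|x\|_{\reel^m}^2\langle\expe_\n[Y],y\rangle_n + \|y\|_{\reel^n}^2\langle\expe_\m[X],x\rangle_m\right)d\op$. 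Finally, the two terms with coefficient $ab$ carry one squared-norm in each copy, so both copies factor completely into marginal expectations, producing the $\op$-independent quantity $2ab\,\expe_\m[\|X\|^2]\,\expe_\n[\|Y\|^2] = 2ab\int\|x\|_{\reel^m}^2\|y\|_{\reel^n}^2\,d\m\,d\n$.

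It remains to recombine. Since the cross term equals $Z(\op) + 2ab\int\|x\|^2\|y\|^2\,d\m\,d\n$, substituting back gives $GW_2^2 = \int c_{\spa{X}}^2\,d\m\,d\m + \int c_{\spa{Y}}^2\,d\n\,d\n - 4ab\int\|x\|^2\|y\|^2\,d\m\,d\n - 2\sup_{\op\in\Pi(\m,\n)}Z(\op)$, where the infimum over $\op$ turns into a supremum because of the $-2$ factor in front of the cross term; this is precisely $C_{\m,\n} - 2\sup_\op Z(\op)$, which is \eqref{eq:vayer}. The equivalence of the two optimization problems is then immediate, since the objectives differ only by an additive constant and the factor $-2$, so the maximizers of $Z$ are exactly the minimizers of the GW objective. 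Integrability of every integral above follows from the finite fourth-moment hypotheses together with Cauchy-Schwarz (e.g. $\int\|x\|^2\|y\|^2\,d\op \le (\int\|x\|^4\,d\m)^{1/2}(\int\|y\|^4\,d\n)^{1/2}$). The main obstacle is not conceptual but the careful bookkeeping in the second step: for each of the nine monomials one must correctly track which factors belong to the $(x,y)$ copy and which to the $(x',y')$ copy, since this determines whether an integral factorizes across the two copies or collapses to a marginal moment, and a slip there would misplace a term between $Z(\op)$ and the constant $C_{\m,\n}$.
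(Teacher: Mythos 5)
Your proof is correct: the expansion of $(c_{\spa{X}}-c_{\spa{Y}})^2$, the reduction of the two square terms to marginal integrals, and the sorting of the nine monomials of the cross term into $Z(\op)$ plus the $\op$-independent $2ab$-contribution all check out, and the sign bookkeeping turning the infimum into $-2\sup_\op Z(\op)$ is right. Note that the paper itself gives no proof of this lemma --- it is imported verbatim from Vayer's thesis \cite{These} --- but your direct expansion is the natural argument and is essentially the one in that reference, so there is nothing to flag beyond the (correctly noted) need for finite fourth moments to justify the integrability of each term.
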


\begin{proof}[Proof of theorem \ref{thm:gromovcov}] Using Lemma \ref{lemme:isometries}, we can focus without any loss of generality on centered Gaussian measures with
diagonal covariance matrices. Thus, defining $ T_0 : x \in \reel^m \mapsto P_0^T(x-m_0) $ and $T_1 : y \in \reel^n \mapsto P_1^T(y - m_1) $
 and then applying Lemma \ref{lemme:vayer} on $ GW_2(T_0\#\m,T_1\#\n) $ with $a = 1$, $b = 1$, and $c = 2$ while remarking
that the last term in Equation \eqref{eq:Z} is null because $\expe_{X \sim T_0\#\m}[X] = 0$ and $\expe_{Y \sim T_1\#\n}[Y] = 0$, it comes
that problem \eqref{eq:gromovgeneral1} is equivalent to

\begin{equation}
\sup_{\op \in \Pi(T_0\#\m,T_1\#\n)} \int\|x\|_{\reel^m}^2\|y\|_{\reel^n}^2d\op(x,y) + 2 \left\| \int xy^Td\op(x,y) \right\|_{\mathcal{F}}^2.
\end{equation}

\noindent Since $ T_0\#\m $ and $ T_1\#\n $ are centered, we have that $ \int xy^Td\op(x,y)  = \text{Cov}(X,Y) $ where $ X \sim T_0\#\m $ 
and $ Y \sim T_1\#\n $. Furthermore, it can be easily computed that
\begin{equation}
\int\|x\|_{\reel^m}^2\|y\|_{\reel^n}^2d\op(x,y) = \sum\limits_{i,j} \text{Cov}(X_i^2,Y_j^2) + \sum\limits_{i,j}\expe[X_i^2]\expe[Y_j^2].
\end{equation}
Since the second term doesn't depend on $ \op $, we get that problem \eqref{eq:gromovgeneral1} is equivalent
to problem \eqref{eq:gromovcov}. 
\end{proof}
The left-hand term of \eqref{eq:gromovcov} is closely related to the sum of symmetric co-kurtosis and so depends on co-moments of order $ 4$ 
of $ \op $. On the other hand, the right-hand term is directly related to the co-moments of order $2$ of $\op$. For this reason, problem \eqref{eq:gromovcov}
is hard to solve because it involves to optimize simultaneously the co-moments of order $2$ and $4$ of $\op$ and so to know the probabilistic rule which 
links them. This rule is well-known when $ \op $ is Gaussian (Isserlis lemma) but this is not the case in general to the best of our knowledge and
there is no reason for the solution of problem \eqref{eq:gromovcov} to be Gaussian.

\section{Study of the general problem}\label{sec:3}
Since problem \eqref{eq:gromovcov} is hard to solve because of its dependence on co-moments of order $2 $ and $ 4$ of $ \op $, one can optimize 
both terms seperately in order to find a lower bound of $ GW_2(\m,\n)$. In the rest of the paper we suppose for convenience and without any
loss of generality that $ n \leq m $. 

\begin{theorem}\label{thm:lowbound}
Suppose without any loss of generality that $ n \leq m $. Let $ \m = \mathcal{N}(m_0,\cov_0) $ and
$ \n = \mathcal{N}(m_1,\cov_1) $ be two Gaussian measures on $ \reel^m $ and $ \reel^n $. Let $ P_0,D_0 $ and $ P_1,D_1 $ be the respective diagona\-lizations of  $\cov_0 (= P_0D_0P_0^T)$ and $ \cov_1 (=P_1D_1P_1^T) $ which sort eigenvalues in decreasing order. We suppose that $ \cov_0 $ is non-singular. A lower bound for $ GW_2(\m,\n) $ is then 
\begin{equation}\label{eq:lowerboundgromov}
GW^2_2(\m,\n) \geq  LGW_2^2(\m,\n), 
\end{equation}
where
\begin{equation}\label{eq:LGW}\tag{LGW}
\begin{split}
LGW_2^2(\m,\n) &=  4\left(\textup{tr}(D_0) - \textup{tr}(D_1)\right)^2 + 4\left(\|D_0\|_{\spa{F}} -\|D_1\|_{\spa{F}}\right)^2 + 4\|D_0^{(n)}-D_1\|_{\spa{F}}^2 \\ &\quad + 4\left(\|D_0\|^2_{\spa{F}} - \|D_0^{(n)}\|_{\spa{F}}^2\right).
\end{split}
\end{equation}
\end{theorem}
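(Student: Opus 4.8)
The plan is to combine the reformulation \eqref{eq:gromovcov} from Theorem~\ref{thm:gromovcov} with the decomposition $GW_2^2(\m,\n) = C_{\m,\n} - 2\sup_{\op}Z(\op)$ of Lemma~\ref{lemme:vayer}, and to bound the supremum from above by optimizing its two summands \emph{independently}. After the reductions of Theorem~\ref{thm:gromovcov} every admissible $\op$ has fixed centered Gaussian marginals $T_0\#\m = \mathcal{N}(0,D_0)$ and $T_1\#\n = \mathcal{N}(0,D_1)$, with $D_0,D_1$ diagonal and sorted decreasingly. First I would rewrite the order-$4$ term as $\sum_{i,j}\textup{Cov}(X_i^2,Y_j^2) = \textup{Cov}(\|X\|_{\reel^m}^2,\|Y\|_{\reel^n}^2)$ and then apply the elementary bound $\sup_{\op}(A(\op)+B(\op)) \leq \sup_{\op}A(\op) + \sup_{\op}B(\op)$ to split \eqref{eq:gromovcov} into two separate maximizations, one over the order-$4$ term and one over $\|\textup{Cov}(X,Y)\|_{\spa{F}}^2$.

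For the order-$4$ term I would use Cauchy--Schwarz, $\textup{Cov}(\|X\|^2,\|Y\|^2) \leq \sqrt{\textup{Var}(\|X\|^2)\,\textup{Var}(\|Y\|^2)}$. Since the marginals are fixed these variances are explicit constants: independence of the coordinates $X_i \sim \mathcal{N}(0,(D_0)_{ii})$ gives $\textup{Var}(\|X\|^2) = 2\|D_0\|_{\spa{F}}^2$ and likewise $\textup{Var}(\|Y\|^2) = 2\|D_1\|_{\spa{F}}^2$, so this term is at most $2\|D_0\|_{\spa{F}}\|D_1\|_{\spa{F}}$.

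For the order-$2$ term I would maximize $\|\textup{Cov}(X,Y)\|_{\spa{F}}^2$ over all admissible couplings. The key point is that $C \in \reel^{m\times n}$ is realizable as $\textup{Cov}(X,Y)$ for a coupling with the prescribed marginals exactly when the joint covariance $\begin{pmatrix} D_0 & C \\ C^T & D_1 \end{pmatrix}$ is positive semi-definite; since $\cov_0$, hence $D_0$, is non-singular, a Schur-complement argument lets me write $C = \mroot{D_0}K\mroot{D_1}$ with $K^TK \preceq I_n$, whence $\|C\|_{\spa{F}}^2 = \textup{tr}(D_0KD_1K^T) = \sum_{i,j}(D_0)_{ii}(D_1)_{jj}K_{ij}^2$. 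I expect this maximization to be the main obstacle. It is a von Neumann / rearrangement-type problem, and this is precisely where the hypothesis that $D_0$ and $D_1$ are both sorted decreasingly enters: $K^TK \preceq I_n$ forces $KK^T \preceq I_m$ as well, so $(K_{ij}^2)_{i,j}$ is doubly substochastic, and the rearrangement inequality then shows the weight should be concentrated on the largest products $(D_0)_{ii}(D_1)_{jj}$, which line up along the diagonal exactly because of the common decreasing ordering. The maximum is $\sum_{i=1}^{n}(D_0)_{ii}(D_1)_{ii} = \textup{tr}(D_0^{(n)}D_1)$, attained by $K = \begin{pmatrix} I_n \\ 0 \end{pmatrix}$.

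Finally I would assemble the pieces. A direct moment computation for centered Gaussians gives $C_{\m,\n} = 8\|D_0\|_{\spa{F}}^2 + 8\|D_1\|_{\spa{F}}^2 + 4\,\textup{tr}(D_0)^2 + 4\,\textup{tr}(D_1)^2 - 4\,\textup{tr}(D_0)\textup{tr}(D_1)$. Substituting the two bounds into $GW_2^2(\m,\n) = C_{\m,\n} - 2\sup_{\op}Z(\op)$, and keeping track of the affine relation between $Z(\op)$ and the integrand of \eqref{eq:gromovcov}, leaves a purely algebraic simplification. Regrouping the resulting terms into the squares $(\textup{tr}(D_0) - \textup{tr}(D_1))^2$, $(\|D_0\|_{\spa{F}} - \|D_1\|_{\spa{F}})^2$ and $\|D_0^{(n)} - D_1\|_{\spa{F}}^2$, together with the leftover $\|D_0\|_{\spa{F}}^2 - \|D_0^{(n)}\|_{\spa{F}}^2$ coming from the excess eigenvalues of $\cov_0$, yields exactly \eqref{eq:LGW}; this last step is routine bookkeeping, the only care being to track the factors of $4$ correctly.
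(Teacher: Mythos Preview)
Your proposal is correct and follows the same overall architecture as the paper: reduce via Theorem~\ref{thm:gromovcov} and Lemma~\ref{lemme:vayer}, bound the order-$4$ and order-$2$ summands of \eqref{eq:gromovcov} separately, and then reassemble using the explicit value of $C_{\m,\n}$. Where you differ is in how you obtain the two key bounds. For the order-$4$ term, the paper packages the cross-covariances of $(X^2,Y^2)$ into a matrix $\tilde K$, uses Isserlis to identify the diagonal blocks as $2D_0^2,2D_1^2$, and then invokes the rank-one matrix optimisation Lemma~\ref{lemme:maxnorm1}; your observation that $\sum_{i,j}\textup{Cov}(X_i^2,Y_j^2)=\textup{Cov}(\|X\|^2,\|Y\|^2)$ together with scalar Cauchy--Schwarz is exactly the same inequality in a more transparent form, and avoids the detour through Lemma~\ref{lemme:maxnorm1}. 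For the order-$2$ term, the paper appeals to Lemma~\ref{lemme:maxnorm2}, whose proof goes through a Stiefel-manifold parametrisation and a Lagrangian computation; your substitution $C=\mroot{D_0}K\mroot{D_1}$ with $K^TK\preceq I_n$ (hence $KK^T\preceq I_m$, so $(K_{ij}^2)$ is doubly substochastic) followed by the rearrangement inequality is a valid and more elementary alternative yielding the same value $\textup{tr}(D_0^{(n)}D_1)$. Two small points to tighten: the parametrisation $C=\mroot{D_0}K\mroot{D_1}$ covers all admissible $C$ only after noting that columns of $C$ corresponding to zero eigenvalues of $D_1$ are forced to vanish by positive semi-definiteness; and the passage from ``doubly substochastic'' to the bound $\sum_j (D_0)_{jj}(D_1)_{jj}$ is cleanest if you first dominate $(K_{ij}^2)$ entrywise by a doubly stochastic matrix and then apply Birkhoff--von~Neumann together with the classical rearrangement inequality.
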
 
The proof of this theorem is divided in smaller intermediary results. First we recall the Isserlis lemma (see \cite{Isserlis}), which allows to derive 
the co-moments of order $ 4 $ of a Gaussian distribution as a fonction of its co-moments of order $2$.

\begin{restatable}[\textbf{Isserlis, 1918, \cite{Isserlis}}]{lemma}{Isserlis}\label{lemme:Isserlis}
Let X be a zero-mean Gaussian vector of size $n $. Then 
\begin{equation}\label{eq:Isserlis}
\forall i,j,k,l \leq n, \  \expe[X_iX_jX_kX_l] = \expe[X_iX_j]\expe[X_kX_l] + \expe[X_iX_k]\expe[X_jX_l] + \expe[X_iX_l]\expe[X_jX_k].
\end{equation}
\end{restatable}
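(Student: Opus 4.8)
The plan is to prove the fourth-moment identity \eqref{eq:Isserlis} through the moment generating function of the Gaussian law, exploiting the fact that this function is the exponential of a quadratic form. Write $\cov$ for the covariance matrix of $X$, so that $\cov_{ij} = \expe[X_iX_j]$ since $X$ is centered. For $t \in \reel^n$ the moment generating function is
\begin{equation}
M(t) = \expe\!\left[e^{t^TX}\right] = \exp\!\left(\tfrac{1}{2}t^T\cov t\right),
\end{equation}
which is finite for every $t$, so $M$ is smooth and its mixed partial derivatives at the origin return the moments:
\begin{equation}
\expe[X_iX_jX_kX_l] = \left.\frac{\partial^4 M}{\partial t_i\,\partial t_j\,\partial t_k\,\partial t_l}\right|_{t=0}.
\end{equation}
The whole argument then reduces to differentiating $\exp(q(t))$, with $q(t) = \tfrac{1}{2}t^T\cov t$, four times and evaluating at $t=0$.

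First I would record the low-order derivatives. Writing $(\cov t)_i = \sum_a \cov_{ia}t_a$, one has $\partial_i M = (\cov t)_i\,M$ and then $\partial_j\partial_i M = \big(\cov_{ij} + (\cov t)_i(\cov t)_j\big)M$. Iterating this rule --- each additional differentiation either strikes a linear factor $(\cov t)$ already present, replacing it by the constant $\cov$-entry, or differentiates $M$ and thereby introduces a new linear factor --- expresses the third and fourth derivatives as a sum of products of $\cov$-entries and factors $(\cov t)$, all multiplied by $M$. Setting $t=0$, where $M(0)=1$, every surviving term must contain no leftover factor $(\cov t)$, which forces the four indices $i,j,k,l$ to be matched in pairs by $\cov$-entries. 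There are exactly three such perfect matchings, namely $\{i,j\}\{k,l\}$, $\{i,k\}\{j,l\}$ and $\{i,l\}\{j,k\}$, and a check of the combinatorics shows each contributes its product of two $\cov$-entries with coefficient one. Recalling $\cov_{ij} = \expe[X_iX_j]$, this is precisely \eqref{eq:Isserlis}.

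The only genuine obstacle is the bookkeeping of the fourth derivative: one must track which differentiations create $\cov$-entries rather than new linear factors, and verify that after setting $t=0$ the three fully paired terms each appear with multiplicity exactly one. This is finite and mechanical. A shorter route avoids it altogether via the Gaussian integration-by-parts (Stein) identity $\expe[X_i f(X)] = \sum_a \cov_{ia}\,\expe[\partial_a f(X)]$, valid for smooth $f$ of polynomial growth: applying it to $f(X) = X_jX_kX_l$ gives directly $\expe[X_iX_jX_kX_l] = \cov_{ij}\expe[X_kX_l] + \cov_{ik}\expe[X_jX_l] + \cov_{il}\expe[X_jX_k]$, which is \eqref{eq:Isserlis} once Stein's identity is granted.
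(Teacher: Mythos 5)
Your proof is correct. Note that the paper itself does not prove this lemma at all: it is stated as a classical result and attributed to Isserlis (1918) by citation, so there is no in-paper argument to compare against. Your moment-generating-function computation is the standard self-contained derivation and checks out: with $M(t)=\exp(\tfrac12 t^T\cov t)$ one has $\partial_i M=(\cov t)_iM$, $\partial_j\partial_i M=(\cov_{ij}+(\cov t)_i(\cov t)_j)M$, and the third derivative is $(\cov_{ij}(\cov t)_k+\cov_{ik}(\cov t)_j+\cov_{jk}(\cov t)_i+(\cov t)_i(\cov t)_j(\cov t)_k)M$; applying $\partial_l$ and setting $t=0$ kills every term still carrying a factor $(\cov t)$, leaving exactly $\cov_{ij}\cov_{kl}+\cov_{ik}\cov_{jl}+\cov_{il}\cov_{jk}$, each pairing with multiplicity one as you claim. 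The only point worth tightening is that the ``bookkeeping'' you describe informally is fully resolved by writing out the third derivative explicitly as above, which makes the multiplicity-one claim immediate rather than something to be ``checked''. Your alternative route via the Gaussian integration-by-parts identity $\expe[X_if(X)]=\sum_a\cov_{ia}\expe[\partial_af(X)]$ applied to $f(X)=X_jX_kX_l$ is also valid and arguably cleaner, though it shifts the burden to proving Stein's identity, which for polynomial $f$ is itself a direct integration by parts against the Gaussian density. Either version is a legitimate proof of the statement the paper takes on faith.
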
 

Then we derive the following general optimization lemmas.  The proofs of these two lemmas are postponed to the Appendix (Section \ref{sec:appendix}).

\begin{restatable}{lemma}{maxnorm}\label{lemme:maxnorm2}
    Suppose that $ n \leq m $. Let $ \Sigma $ be a semi-definite positive matrix of size $ m + n $ of the form 
    $$
    \cov = \begin{pmatrix} \cov_0 &  K \\ K^T & \cov_1 \end{pmatrix}, 
    $$
    with $ \cov_0 \in S^{++}_m(\mathbb{R}) $, $ \cov_1 \in S^{+}_n(\mathbb{R}) $ and $ K  \in \reel^{m \times n}$. 
    Let $ P_0,D_0 $ and $ P_1, D_1 $ be the respective diagonalisations of $ \cov_0 (= P_0^TD_0P_0) $ and $ \cov_1 (= P_1^TD_1P_1)$ which sort 
    the eigenvalues in decreasing order. Then
    \begin{equation}\label{eq:maxnorm2}
    \max_{\cov_1 - K^T\cov_0^{-1}K \in S^+_n(\mathbb{R})} \|K\|^2_{\mathcal{F}}  = \textup{tr}(D_0^{(n)}D_1),
    \end{equation}
    and is achieved at any
    \begin{equation}\label{eq:optimalKgaussian}\tag{opKl2}
    K^* = P_0^T\begin{pmatrix} \tilde{I}_n(D_0^{(n)})^\frac{1}{2}D_1^\frac{1}{2} \\ 0_{m-n,n} \end{pmatrix}P_1,
    \end{equation}
    where $ \tilde{I}_n $ is of the form $ \textup{diag}((\pm 1)_{i\leq n}) $.
\end{restatable}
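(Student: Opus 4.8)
The plan is to strip off the orthogonal factors by a congruence, turn the Schur-type constraint into a clean Loewner inequality $M^TM \preceq D_1$, and then bound the resulting trace by combining Weyl's eigenvalue monotonicity with von Neumann's trace inequality. First I would set $\tilde K = P_0 K P_1^T$, so that $K = P_0^T\tilde K P_1$ and $\|K\|_{\mathcal F}^2 = \|\tilde K\|_{\mathcal F}^2$ since $P_0,P_1$ are orthogonal. Using $\cov_0^{-1} = P_0^T D_0^{-1}P_0$ and $\cov_1 = P_1^T D_1 P_1$, the constraint $\cov_1 - K^T\cov_0^{-1}K \in S_n^+(\reel)$ is equivalent to $D_1 - \tilde K^T D_0^{-1}\tilde K \in S_n^+(\reel)$. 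Since $\cov_0 \in S_m^{++}(\reel)$ we have $D_0 \succ 0$, so I can further substitute $M = \imroot{D_0}\tilde K \in \reel^{m\times n}$, which turns the constraint into $M^TM\preceq D_1$ and the objective into
\[
\|\tilde K\|_{\mathcal F}^2 = \textup{tr}\!\left(M^T D_0 M\right) = \textup{tr}\!\left(D_0 MM^T\right).
\]
Thus the whole problem becomes: maximize $\textup{tr}(D_0 MM^T)$ over $M\in\reel^{m\times n}$ subject to $M^TM\preceq D_1$.

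For the upper bound I would argue as follows. The matrix $MM^T\in S_m^+(\reel)$ has rank at most $n$ and its nonzero eigenvalues coincide with those of $M^TM$. Because $M^TM\preceq D_1$, Weyl's monotonicity theorem gives $\lambda_k(M^TM)\leq \lambda_k(D_1)=\mu_k$ for every $k\leq n$, where I write $D_0=\textup{diag}(\lambda_1\geq\dots\geq\lambda_m)$ and $D_1=\textup{diag}(\mu_1\geq\dots\geq\mu_n)$. Hence $\lambda_k(MM^T)\leq\mu_k$ for $k\leq n$ and $\lambda_k(MM^T)=0$ for $n<k\leq m$. Applying von Neumann's trace inequality to the symmetric positive matrices $D_0$ and $MM^T$ (eigenvalues sorted decreasingly) yields
\[
\textup{tr}\!\left(D_0 MM^T\right)\leq \sum_{k=1}^{m}\lambda_k\,\lambda_k(MM^T)=\sum_{k=1}^{n}\lambda_k\,\lambda_k(MM^T)\leq \sum_{k=1}^{n}\lambda_k\mu_k,
\]
where the final inequality uses $\lambda_k\geq 0$. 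The right-hand side is exactly $\textup{tr}(D_0^{(n)}D_1)$, which proves the $\leq$ direction of \eqref{eq:maxnorm2}. Note that this argument never inverts $D_1$, so it covers a possibly singular $\cov_1$ with no special treatment.

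It then remains to exhibit a feasible $K$ attaining the bound. I would take $M^*=\begin{pmatrix}\tilde I_n\mroot{D_1}\\ 0_{m-n,n}\end{pmatrix}$, i.e. $\tilde K^* = \mroot{D_0}M^* = \begin{pmatrix}\tilde I_n\mroot{(D_0^{(n)})}\mroot{D_1}\\ 0_{m-n,n}\end{pmatrix}$, which corresponds through $K^*=P_0^T\tilde K^* P_1$ to the claimed optimizer \eqref{eq:optimalKgaussian}. Since $\tilde I_n$ is diagonal with $\tilde I_n^2=I_n$, a direct computation gives $(M^*)^TM^*=\mroot{D_1}\tilde I_n^2\mroot{D_1}=D_1$, so the constraint holds with equality, and $\|\tilde K^*\|_{\mathcal F}^2=\textup{tr}\!\left(\mroot{D_1}D_0^{(n)}\mroot{D_1}\right)=\textup{tr}(D_0^{(n)}D_1)$. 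The arbitrary signs in $\tilde I_n$ leave the objective invariant, which both matches the statement and explains the non-uniqueness of the maximizer.

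The main obstacle, I expect, is the upper bound rather than the construction: one has to recognize that Weyl monotonicity together with von Neumann's trace inequality produces the clean pairing $\sum_{k\leq n}\lambda_k\mu_k$, and to keep the eigenvalue bookkeeping consistent between the $n\times n$ matrix $M^TM$ and the $m\times m$ matrix $MM^T$ (which carries $m-n$ forced zero eigenvalues) so that the trace inequality is applied to matrices of matching size $m$. The reductions and the verification of the optimizer are then routine.
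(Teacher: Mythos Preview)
Your argument is correct and takes a genuinely different route from the paper. The paper parametrizes the feasible set by the Schur complement $S=\cov_1-K^T\cov_0^{-1}K$, writes $K=\mroot{\cov_0}B_r\Lambda_r U_r^T$ via a Stiefel-manifold factor $B_r$, and then carries out a two-stage optimization: a Lagrangian/first-order analysis over $B_r$ (essentially reproving a Brockett/Anstreicher--Wolkowicz-type result) to obtain the optimal value $\textup{tr}(D_0\tilde\Lambda_m(S))$ for fixed $S$, followed by a Courant--Fischer argument to show $S=0$ is best. Your approach collapses this to a single change of variables $M=\imroot{D_0}\tilde K$ and two standard matrix inequalities: Weyl monotonicity to bound the spectrum of $M^TM$ by that of $D_1$, and von Neumann's trace inequality applied to $D_0$ and $MM^T$ in $\reel^{m\times m}$ (correctly padding with $m-n$ zero eigenvalues) to pair eigenvalues optimally. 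This is shorter, avoids the Stiefel parametrization and Lagrangian machinery entirely, and handles singular $\cov_1$ without any rank bookkeeping. The paper's route, in exchange, is more constructive in that the Stiefel parametrization makes the structure of \emph{all} stationary points visible, but for the statement as written your two-inequality proof is cleaner and fully sufficient.
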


\begin{restatable}{lemma}{maxnormm}\label{lemme:maxnorm1}
    Suppose that  $ n \leq m $. Let $\Sigma $ be a semi-definite positive matrix of size  $ m + n $ of the form:
    $$  \Sigma = \begin{pmatrix} \cov_0 & K \\ K^T & \cov_1\end{pmatrix}, $$
    where $ \cov_0 \in S^{++}_m(\mathbb{R}) $, $ \cov_1 \in S^{+}_n(\mathbb{R}) $, and $ K  \in \reel^{m \times n}$. Let $ A \in \reel^{n \times m} $
    be a matrix with rank $1$. Then
    \begin{equation}
    \max_{\cov_1 - K^T\cov_0^{-1}K \in S_n^+(\mathbb{R})} \textup{tr}(KA) = \sqrt{\textup{tr}(A\cov_0A^T\cov_1)}.
    \end{equation}
    In particular, if $ \cov_0 = \textup{diag}(\alpha) $ and $ \cov_1 = \textup{diag}(\beta) $ with $ \alpha \in \reel^m $ and $ \beta \in \reel^n $, then
    \begin{equation}\label{eq:maxnorm1}
    \max_{\cov_1 - K^T\cov_0^{-1}K \in S_n^+(\mathbb{R})}  \textup{tr}(K\mathbb{1}_{n,m}) = \sqrt{\textup{tr}(\cov_0)\textup{tr}(\cov_1)},
    \end{equation}
    with $ \mathbb{1}_{n,m} = (1)_{i \leq n, j \leq m} $, and is achieved at 
    \begin{equation}\label{eq:optimalKnorm1}\tag{opKl1}
    K^* = \frac{\alpha\beta^T}{\sqrt{\textup{tr}(\cov_0)\textup{tr}(\cov_1)}}.
    \end{equation}
\end{restatable}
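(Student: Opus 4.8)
The plan is to collapse this matrix optimization into a one-dimensional Cauchy--Schwarz estimate by using the rank-one structure of $A$. Since $\cov_0$ is non-singular, the constraint $\cov_1 - K^T\cov_0^{-1}K \in S_n^+(\reel)$ is precisely the Schur complement condition guaranteeing $\cov \in S_{m+n}^+(\reel)$; it cuts out a bounded convex set of matrices $K$ (and is nonempty, containing $K=0$), so the supremum is in fact attained. Writing the rank-one matrix as $A = uv^T$ with $u \in \reel^n$ and $v \in \reel^m$ (both nonzero), the objective reduces to the scalar $\textup{tr}(KA) = \textup{tr}(Kuv^T) = v^TKu$.

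First I would bound $v^TKu$ from above. Inserting $\imroot{\cov_0}\,\mroot{\cov_0} = I_m$ and applying Cauchy--Schwarz in $\reel^m$,
\[
v^TKu = \langle \imroot{\cov_0}Ku,\ \mroot{\cov_0}v\rangle_m \le \|\imroot{\cov_0}Ku\|\,\|\mroot{\cov_0}v\| = \sqrt{u^TK^T\cov_0^{-1}Ku}\,\sqrt{v^T\cov_0 v}.
\]
The feasibility constraint gives $u^TK^T\cov_0^{-1}Ku \le u^T\cov_1 u$, hence $v^TKu \le \sqrt{(v^T\cov_0 v)(u^T\cov_1 u)}$. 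A direct computation with $A=uv^T$ gives $A\cov_0A^T\cov_1 = (v^T\cov_0 v)\,uu^T\cov_1$, so $\textup{tr}(A\cov_0A^T\cov_1) = (v^T\cov_0 v)(u^T\cov_1 u)$ and the upper bound is exactly $\sqrt{\textup{tr}(A\cov_0A^T\cov_1)}$.

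Then I would show the bound is tight by exhibiting a maximizer. Note $v^T\cov_0 v>0$ since $\cov_0\succ 0$ and $v\neq 0$; if the degenerate case $u^T\cov_1 u = 0$ occurs the bound is $0$, attained at $K=0$, so I assume $u^T\cov_1 u>0$ and set
\[
K^* = \frac{(\cov_0 v)(\cov_1 u)^T}{\sqrt{(v^T\cov_0 v)(u^T\cov_1 u)}}.
\]
A short computation gives $v^TK^*u = \sqrt{(v^T\cov_0 v)(u^T\cov_1 u)}$, matching the bound. The only genuine point to verify is feasibility, which is the main obstacle: expanding yields $K^{*T}\cov_0^{-1}K^* = \cov_1 uu^T\cov_1/(u^T\cov_1 u)$, and writing $q = \mroot{\cov_1}u$ one gets $\cov_1 - K^{*T}\cov_0^{-1}K^* = \mroot{\cov_1}\bigl(I_n - qq^T/\|q\|^2\bigr)\mroot{\cov_1} \in S_n^+(\reel)$, since $I_n - qq^T/\|q\|^2$ is an orthogonal projector and hence positive semidefinite. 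Everything else is bookkeeping.

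Finally, the particular case follows by specializing $A = \mathbb{1}_{n,m} = \mathbb{1}_{n,1}\mathbb{1}_{m,1}^T$, i.e. $u = \mathbb{1}_{n,1}$, $v = \mathbb{1}_{m,1}$, together with $\cov_0 = \textup{diag}(\alpha)$ and $\cov_1 = \textup{diag}(\beta)$. Then $\cov_0 v = \alpha$, $\cov_1 u = \beta$, $v^T\cov_0 v = \textup{tr}(\cov_0)$ and $u^T\cov_1 u = \textup{tr}(\cov_1)$, so the general maximizer becomes $K^* = \alpha\beta^T/\sqrt{\textup{tr}(\cov_0)\textup{tr}(\cov_1)}$ and the optimal value becomes $\sqrt{\textup{tr}(\cov_0)\textup{tr}(\cov_1)}$, as claimed.
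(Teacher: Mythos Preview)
Your proof is correct and takes a genuinely different, and considerably shorter, route than the paper's own argument. The paper parametrizes feasible $K$ via a Stiefel manifold (writing $K=\mroot{\cov_0}B_r\Lambda_rU_r^T$ with $B_r^TB_r=I_r$), sets up a Lagrangian in $B_r$, extracts from the first-order conditions that $\mroot{\cov_0}A^TU_r\Lambda_rB_r^T$ is symmetric of rank one, and from there deduces $(\textup{tr}(KA))^2=\textup{tr}(A\cov_0A^T(\cov_1-S))$ for the optimal $B_r$; it then optimizes over the Schur complement $S$ using the Anstreicher--Wolkowicz lemma, and finally verifies the explicit $K^*$ in the diagonal case via the matrix determinant lemma. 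Your argument bypasses all of this machinery by exploiting the factorization $A=uv^T$ directly: the single Cauchy--Schwarz step $v^TKu\le\|\imroot{\cov_0}Ku\|\,\|\mroot{\cov_0}v\|$ followed by the constraint inequality $u^TK^T\cov_0^{-1}Ku\le u^T\cov_1 u$ gives the upper bound in two lines, and the rank-one candidate $K^*=(\cov_0 v)(\cov_1 u)^T/\sqrt{(v^T\cov_0 v)(u^T\cov_1 u)}$ together with the projector identity $\cov_1-K^{*T}\cov_0^{-1}K^*=\mroot{\cov_1}(I_n-qq^T/\|q\|^2)\mroot{\cov_1}$ handles both attainment and feasibility at once, for general $\cov_0,\cov_1$ rather than just the diagonal case. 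The paper's approach has the merit of being structurally parallel to its proof of Lemma~\ref{lemme:maxnorm2} (same Stiefel parametrization), but your Cauchy--Schwarz reduction is more transparent and self-contained.
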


\begin{proof}[Proof of theorem \ref{thm:lowbound}] 
For $ \m = \mathcal{N}(m_0,\cov_0) $ and $ \n = \mathcal{N}(m_1,\cov_1) $, we note $ P_0,D_0 $
and $ P_1,D_1 $ the respective diagonalizations of $ \cov_0 $ and $ \cov_1 $ which sort the eigenvalues in decreasing order. 
Let $ T_0 : x \in \reel^m \mapsto P_0^T(x- m_0) $ and $ T_1 :  y \in \reel^n \mapsto P_1^T(y-m_1) $. For $ \op \in \Pi(T_0\#\m,T_1\#\n) $ and $ (X,Y) \sim \op $,  we denote $ \cov $ the covariance matrix 
of $ \op $ and $ \tilde{\cov} $ the covariance matrix of $ (X^2,Y^2)$ with $ X^2 := ([XX^T]_{i,i})_{i \leq m} $ and $ Y^2 := ([YY^T]_{j,j})_{j \leq n} $. Using Isserlis lemma to compute $ \text{Cov}(X^2,X^2) $ and $ \text{Cov}(Y^2,Y^2) $, it comes that $ \cov $ and $ \tilde{\cov} $ are of the form:
\begin{equation}
\cov = \begin{pmatrix} D_0 & K \\ K^T & D_1 \end{pmatrix} \quad \text{and} \quad \tilde{\cov} = \begin{pmatrix} 2D_0^2 & \tilde{K} \\ \tilde{K}^T & 2D_1^2 \end{pmatrix}.
\end{equation}
In order to find a supremum for each term of \eqref{eq:gromovcov}, we use a necessary condition for $ \op $ to be in $ \Pi(T_0\#\m,T_1\#\n) $
which is that $ \cov $ and $ \tilde{\cov} $ must be semi-definite positive. To do so, we can use the equivalent condition that the Schur complements of $ \cov $ 
and $ \tilde{\cov} $, namely $ D_1 - K^TD_0^{-1}K $ and $ 2D_1^2 - \frac{1}{2}\tilde{K}^TD_0^{-2}\tilde{K} $,  must also be semi-definite positive. Remarking that the left-hand term in \eqref{eq:gromovcov} can be rewritten $ \text{tr}(\tilde{K}\mathbb{1}_{m,n}) $, 
we have the two following inequalities

\begin{equation}\label{eq:inequalityorder4}
\sup_{X \sim T_0\#\m, Y \sim T_1\#\n}\sum\limits_{i,j}\text{Cov}(X_i^2,Y_j^2)   \leq \max_{2D_1^2 - \frac{1}{2}K^TD_0^{-2}K \in S^{+}_n(\mathbb{R})} \text{tr}(\tilde{K}\mathbb{1}_{n,m}),
\end{equation}
and
\begin{equation}\label{eq:inequalityorder2}
\sup_{X \sim T_0\#\m, Y \sim T_1\#\n} \|\text{Cov}(X,Y)\|^2_{\spa{F}} \leq \max_{D_1 - K^TD_0^{-1}K \in S^{+}_n(\mathbb{R})} \|K\|_{\spa{F}}^2. 
\end{equation}


\noindent Applying Lemmas \ref{lemme:maxnorm2} and  \ref{lemme:maxnorm1} on both right-hand terms, 
we get on one hand:
\begin{equation}\label{eq:solproblemnorm2}
\sup_{X \sim T_0\#\m, Y \sim T_1\#\n} \|\text{Cov}(X,Y)\|^2_{\spa{F}} \leq \text{tr}(D_0^{(n)}D_1), 
\end{equation}  
and on the other hand:
\begin{equation}\label{eq:solproblemnorm1}
\begin{split}
\sup_{X \sim T_0\#\m, Y \sim T_1\#\n} \sum\limits_{i,j}\text{Cov}(X^2_i,Y^2_j) &\leq 2\sqrt{\text{tr}(D_0^2)\text{tr}(D_1^2)} \\
 &\quad = 2\|D_0\|_{\spa{F}}\|D_1\|_{\spa{F}}.
\end{split}
\end{equation} 
Furthermore, using Lemma \ref{lemme:vayer}, it comes that
\begin{equation}
\begin{split}
GW_2^2(\m,\n) &= C_{\m,\n} \\
&\quad - 4\sup_{X \sim T_0\#\m, Y \sim T_1\#\n}\left(\sum\limits_{i,j}\text{Cov}(X_i^2,Y_j^2) + \sum\limits_{i,j}\expe[X_i^2]\expe[X_j^2] + 2\left\|\text{Cov}(X,Y)\right\|_{\spa{F}}^2\right) \\
&\geq C_{\m,\n} - 8\sqrt{\text{tr}(D_0^2)\text{tr}(D_1^2)} - 4\text{tr}(D_0)\text{tr}(D_1) - 8\text{tr}(D_0^{(n)}D_1),
\end{split}
\end{equation}
where
\begin{equation}
\begin{split}
C_{\m,\n} &= \expe_{U \sim \mathcal{N}(0,2D_0)}[\|U\|^4_{\reel^m}] + \expe_{V \sim \mathcal{N}(0,2D_1)}[\|V\|^4_{\reel^n}] - 4\expe_{X \sim \m}[\|X\|^2_{\reel^m}]\expe_{Y \sim \n}[\|Y\|^2_{\reel^n}] \\
&=  8\text{tr}(D_0^2) + 4(\text{tr}(D_0))^2 + 8\text{tr}(D_1^2) + 4(\text{tr}(D_1))^2 - 4\text{tr}(D_0)\text{tr}(D_1).
\end{split}
\end{equation}
Finally
\begin{equation}
\begin{split}
GW_2^2(\m,\n) &\geq 4(\text{tr}(D_0))^2 + 4(\text{tr}(D_1))^2 - 8\text{tr}(D_0)\text{tr}(D_1) + 8\text{tr}(D_0^2) + 8\text{tr}(D_1^2) \\
& \quad- 8\sqrt{\text{tr}(D_0^2)\text{tr}(D_1^2)} - 8\text{tr}(D_0^{(n)}D_1) \\
& \quad = LGW_2^2(\m,\n).
\end{split}
\end{equation}
\end{proof}

Inequalities \eqref{eq:inequalityorder4} and \eqref{eq:inequalityorder2} become equalities if one can exhibit a plan $ \op $ such that $ \cov $ or $ \tilde{\cov} $ are
such that $ \|K\|_{\spa{F}}^2 $ or $ \text{tr}(\tilde{K}\mathbb{1}_{n,m}) $ are maximized.
This is the case for \eqref{eq:inequalityorder2} where we can exhibit the Gaussian
plan $ \op^* $ such that $ K $ is of the form \eqref{eq:optimalKgaussian} 
but it seems however more tricky to exhibit such a plan for inequality \eqref{eq:inequalityorder4}. Indeed, it can be shown 
that it doesn't exist a Gaussian plan such that $ \tilde{K} $ is of the form \eqref{eq:optimalKnorm1}.

The lower bound $ LGW_2 $ is reached if it exists a plan $ \op $ which optimizes both terms simultaneously. This seems rather unlikely because
if a probability distribution has its covariance matrix such that $ K $ is of the form \eqref{eq:optimalKgaussian}, then it is necessarily Gaussian
thanks to the equality case in Cauchy-Schwarz: if $ D_0 = \text{diag}(\alpha) $ and $ D_1 = \text{diag}(\beta) $ with $ \alpha \in \reel^m $ and 
$ \beta \in \reel^n $, and if $ \op $ has its covariance matrix such that $ K$ is of the form \eqref{eq:optimalKgaussian}, then for all $ i \leq n $,
$ \text{Cov}(X_i,Y_i) = \pm \sqrt{\alpha_i\beta_i} $ and $ Y_i $ depends linearly in $ X_i $ . As an outcome, $ \op $ is Gaussian and we can compute, using
Isserlis lemma, that $ \text{tr}(\tilde{K}\mathbb{1}_{n,m}) = 2\text{tr}(D_0D_1) $ and so $ \tilde{K}$ cannot be of
the form \eqref{eq:optimalKnorm1}. However, we didn't prove that the solution of the form \eqref{eq:optimalKgaussian} is unique so it may exist
another solution which doesn't imply that $ \op $ has to be Gaussian.

\section{Problem restricted to Gaussian transport plans}\label{sec:4}
In this section, we study the following problem, where we constrain the optimal transport plan to be Gaussian. 
\begin{equation}\label{eq:gromovrestrictedgaussian}\tag{GaussGW}
GGW_2^2(\m,\n) = \inf_{\op \in \Pi(\m,\n)\cap\mathcal{N}_{m+n}} \int \int \left(\|x - x'\|^2_{\reel^m} - \|y - y'\|^2_{\reel^n}\right)^2d\op(x,y)d\op(x',y'),
\end{equation}
where  $ \spa{N}_{m + n} $ is the set of Gaussian measures on $ \reel^{m + n} $. We show the following main result.

\begin{theorem}\label{thm:gaussian}
Suppose without any loss of generality that $ n \leq m $. Let $ \m = \mathcal{N}(m_0,\cov_0) $ and $ \n = \mathcal{N}(m_1,\cov_1) $ be two Gaussian measures on $ \reel^m $ and $ \reel^n $. Let $ P_0,D_0 $ and $ P_1,D_1 $ be the respective diagona\-lizations of  $\cov_0 (= P_0D_0P_0^T)$ and $ \cov_1 (=P_1D_1P_1^T) $ which sort eigenvalues in decreasing order. We suppose that $ \cov_0 $ is non-singular ($ \m $ is not degenerate).
Then problem \eqref{eq:gromovrestrictedgaussian} admits a solution of the form $ \op^* = (I_m, T)\#\m $ with $ T $ affine of the form
\begin{equation}\label{eq:optimalmapgromov} 
\forall x \in \reel^m, \   T(x) = m_1 + P_1AP_0^T(x - m_0).
\end{equation}
where $ A \in \mathbb{R}^{n \times m} $ is written
$$ A = \begin{pmatrix} \tilde{I}_nD_1^{\frac{1}{2}}(D_0^{(n)})^{-\frac{1}{2}} & 0_{n,m-n} \end{pmatrix}, $$
where $ \tilde{I}_n $ is of the form $ \textup{diag}((\pm 1)_{i\leq n}) $. Moreover
\begin{equation}\label{eq:GGW}\tag{GGW}
GGW^2_2(\m,\n) = 4(\textup{tr}(D_0) - \textup{tr}(D_1))^2 + 8\|D_0^{(n)} - D_1\|_{\spa{F}}^2 + 8\left(\|D_0\|^2_{\spa{F}} - \|D_0^{(n)}\|_{\spa{F}}^2\right).
\end{equation}
\end{theorem}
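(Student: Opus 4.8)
The plan is to exploit the restriction to Gaussian plans in order to express the objective of \eqref{eq:gromovcov} purely in terms of the cross-covariance matrix, after which the maximization becomes a direct instance of Lemma \ref{lemme:maxnorm2}. Exactly as in the proof of Theorem \ref{thm:gromovcov}, Lemma \ref{lemme:isometries} lets me reduce without loss of generality to centered Gaussians with diagonal covariances $D_0$ and $D_1$, by working with $T_0\#\m$ and $T_1\#\n$; the Gaussian restriction is preserved because $T_0$ and $T_1$ are affine. A centered Gaussian plan $\op \in \Pi(T_0\#\m, T_1\#\n)$ is then entirely determined by its joint covariance $\cov = \begin{pmatrix} D_0 & K \\ K^T & D_1 \end{pmatrix}$, and the admissible $K$ are precisely those for which $\cov \in S^+_{m+n}(\reel)$, i.e., since $D_0$ is non-singular, those satisfying the Schur complement condition $D_1 - K^T D_0^{-1}K \in S^+_n(\reel)$.

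The key step is to evaluate the objective of \eqref{eq:gromovcov} on such a plan. By Lemma \ref{lemme:Isserlis}, for a centered Gaussian vector $(X,Y)$ one has $\expe[X_i^2 Y_j^2] = \expe[X_i^2]\expe[Y_j^2] + 2\expe[X_iY_j]^2$, so that $\textup{Cov}(X_i^2,Y_j^2) = 2K_{ij}^2$ and hence $\sum_{i,j}\textup{Cov}(X_i^2,Y_j^2) = 2\|K\|_{\spa{F}}^2$. Since moreover $\|\textup{Cov}(X,Y)\|_{\spa{F}}^2 = \|K\|_{\spa{F}}^2$, the whole objective of \eqref{eq:gromovcov} collapses to $4\|K\|_{\spa{F}}^2$. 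This is the conceptual heart of the argument and the point where the Gaussian hypothesis is essential: in the general case the order-$4$ and order-$2$ terms are maximized by different matrices $K$ (compare Lemmas \ref{lemme:maxnorm2} and \ref{lemme:maxnorm1}), whereas Isserlis forces them to be proportional, so that a single $K$ maximizes both simultaneously. Applying Lemma \ref{lemme:maxnorm2} then gives $\sup 4\|K\|_{\spa{F}}^2 = 4\,\textup{tr}(D_0^{(n)}D_1)$, attained at the $K^*$ of the form \eqref{eq:optimalKgaussian} with $P_0 = I_m$ and $P_1 = I_n$.

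It remains to check that this optimal $K^*$ is realized by a genuine (degenerate) Gaussian plan and to identify the associated map. I would search for the plan in the form $(X, AX)$ with $A \in \reel^{n \times m}$: then $\textup{Cov}(X, AX) = D_0 A^T$, so imposing $D_0 A^T = K^*$ and solving yields $A = \begin{pmatrix} \tilde{I}_n D_1^{\frac{1}{2}}(D_0^{(n)})^{-\frac{1}{2}} & 0_{n,m-n}\end{pmatrix}$, which is exactly the matrix in the statement. A direct computation gives $A D_0 A^T = D_1$, confirming that the second marginal of $(X,AX)$ is $\mathcal{N}(0,D_1)$ and that $K^*$ is admissible (its Schur complement vanishes, reflecting the degeneracy of the plan). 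Undoing the reductions $T_0,T_1$ turns $Y = AX$ into $y = m_1 + P_1 A P_0^T(x - m_0)$, which is precisely \eqref{eq:optimalmapgromov}, and the corresponding plan is the degenerate Gaussian $(I_m, T)\#\m$.

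Finally, to obtain the value \eqref{eq:GGW}, I would substitute the maximal value $\|K^*\|_{\spa{F}}^2 = \textup{tr}(D_0^{(n)}D_1)$ into the identity $GW_2^2 = C_{\m,\n} - 2\sup_{\op} Z(\op)$ of Lemma \ref{lemme:vayer}, restricted to Gaussian plans, using $\sum_{i,j}\expe[X_i^2]\expe[Y_j^2] = \textup{tr}(D_0)\textup{tr}(D_1)$ and the value of $C_{\m,\n}$ already computed in the proof of Theorem \ref{thm:lowbound}. This yields $GGW_2^2 = C_{\m,\n} - 16\,\textup{tr}(D_0^{(n)}D_1) - 4\,\textup{tr}(D_0)\textup{tr}(D_1)$, and a routine regrouping of traces into the squares $(\textup{tr}(D_0) - \textup{tr}(D_1))^2$, $\|D_0^{(n)} - D_1\|_{\spa{F}}^2$ and $\|D_0\|_{\spa{F}}^2 - \|D_0^{(n)}\|_{\spa{F}}^2$ matches \eqref{eq:GGW}. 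Beyond this bookkeeping, the only genuine subtlety is the verification that the Isserlis collapse makes both moment terms co-maximal and that the resulting singular $K^*$ corresponds to an admissible degenerate Gaussian plan rather than merely to a feasible covariance matrix.
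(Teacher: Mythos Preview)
Your proposal is correct and follows essentially the same approach as the paper: use Isserlis to collapse the two terms of \eqref{eq:gromovcov} into a single $\|K\|_{\spa{F}}^2$ maximization, invoke Lemma~\ref{lemme:maxnorm2}, and then read off the map and the value via Lemma~\ref{lemme:vayer}. Your treatment is in fact slightly more explicit than the paper's in verifying that the optimal $K^*$ is realized by a bona fide push-forward plan (you solve $D_0A^T=K^*$ and check $AD_0A^T=D_1$), whereas the paper simply appeals to the equality case in Cauchy--Schwarz.
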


\begin{proof} This theorem is a direct consequence of Isserlis lemma \ref{lemme:Isserlis}: indeed, the left term in equation
\eqref{eq:gromovcov} can be in that case rewritten $  2\|\text{Cov}(X,Y)\|_{\spa{F}}^2 $ and so problem \eqref{eq:gromovrestrictedgaussian} is 
equivalent to
\begin{equation}
\sup_{X \sim T_0\#\m, Y \sim T_1\#\n} \|\text{Cov}(X,Y)\|_{\spa{F}}^2. 
\end{equation}
Applying Lemma \ref{lemme:maxnorm2}, we can exhibit a Gaussian optimal plan $ \op^* \in \Pi(\m,\n ) $ with covariance matrix $ \cov $ of the form:
\begin{equation}
\cov = \begin{pmatrix} \cov_0 & K^* \\ K^{*T} & \cov_1 \end{pmatrix},
\end{equation}
with     
\begin{equation}
K^* = P_0^T\begin{pmatrix} \tilde{I}_n(D_0^{(n)})^\frac{1}{2}D_1^\frac{1}{2} \\ 0_{m-n,n} \end{pmatrix}P_1.
\end{equation}
Thus, using the equality case in Cauchy-Schwarz, we can exhibit an optimal transport map $ T $ of the form 
\begin{equation}\label{eq:optimalT}
\forall x \in \reel^m, \ T(x) = m_1 + P_1AP_0^T(x - m_0),
\end{equation}
with 
$$ A = \begin{pmatrix} \tilde{I}_nD_1^{\frac{1}{2}}(D_0^{(n)})^{-\frac{1}{2}} & 0_{n,m-n} \end{pmatrix}, $$
where $ \tilde{I}_n $ is of the form $ \textup{diag}((\pm 1)_{i\leq n}) $. Moreover, using Lemmas \ref{lemme:vayer} and \ref{lemme:maxnorm2}, it comes that
\begin{equation}
\begin{split}
GGW_2^2(\m,\n) &= C_{\m,\n} - 16\sup_{X \sim T_0\#\m, Y \sim T_1\#\n} \|\text{Cov}(X,Y)\|_{\spa{F}}^2 \\
&= 8\text{tr}(D_0^2) + 4(\text{tr}(D_0))^2 + 8\text{tr}(D_1^2) + 4(\text{tr}(D_1))^2 - 4\text{tr}(D_0)\text{tr}(D_1) - 16\text{tr}(D_0^{(n)}D_1) \\
&= 4(\textup{tr}(D_0) - \textup{tr}(D_1))^2 + 8\textup{tr}\left((D_0^{(n)} - D_1)^2\right) + 8\left(\textup{tr}(D_0^2) - \text{tr}((D_0^{(n)})^2)\right).
\end{split}
\end{equation}
\end{proof}

\paragraph{Link with Gromov-Monge}
The previous result generalizes Theorem 4.2.6 in~\cite{These}, which studies the solutions of the linear Gromov-Monge problem between Gaussian distributions
\begin{equation}
 \inf_{T \# \m=\n,\, \text{T linear}} \int \int \left(\|x - x'\|^2_{\reel^m} - \|T(x) - T(x')\|^2_{\reel^n}\right)^2d\m(x)d\n(x').
\label{eq:linear_gromov-Monge}
\end{equation}
Indeed, solutions of~\eqref{eq:linear_gromov-Monge} necessarily provide Gaussian transport plans $\pi = (I_m,T)\#\m$  if $T$ is linear. Conversely, Theorem~\ref{thm:gaussian} shows that restricting the optimal plan to be Gaussian in Gromov-Wasserstein between two Gaussian distributions yields an optimal plan of the form $\pi = (I_m,T)\#\m$  with a linear $T$, whatever the dimensions $m$ and $n$ of the two Euclidean spaces.

\paragraph{Link with Principal Component Analysis}
We can easily draw connections between $ GGW_2^2 $ and PCA. 
Indeed, we can remark that the optimal plan can be derived by performing PCA on both distributions $ \m $ and $ \n $ in order to obtain distributions  $ \tilde{\m} $
and $ \tilde{\n} $ with zero mean vectors and diagonal covariance matrices with eigenvalues in decreasing order  ($\tilde{\m} = T_0\#\m$ and $\tilde{\n} = T_1\#\n$), then
by keeping only the $n$  first components in $ \tilde{\m} $ and finally by deriving the optimal transport plan which achieves $ W_2^2 $ between the obtained truncated distribution and
$ \tilde{\n} $. In other terms, noting $ P_n: \reel^m \rightarrow \reel^n $ the linear mapping which, for $ x \in \reel^m $ keeps only its $ n $ first components ($n$-frame), 
$ T_{W_2} $ the optimal transport map such that $ \op_{W_2} = (I_n,T_{W_2})\#P_n\#\tilde{\m} $ achieves $ W_2(P_n\#\tilde{\m},\tilde{\n}) $, it comes that the optimal plan $ \op_{GGW_2} $ which achieves $ 
GGW_2(\tilde{\m},\tilde{\n}) $ can be written
\begin{equation}
\op_{GGW_2} = (I_m,\tilde{I}_n\#T_{W_2}\#P_n)\#\tilde{\m}.
\end{equation}
An example of $ \op_{GGW_2} $ can be found in Figure \ref{fig:ggw2} when $ m = 2 $ and $ n = 1 $.

\begin{figure}[!h]
    \centering
    \includegraphics[width=\textwidth]{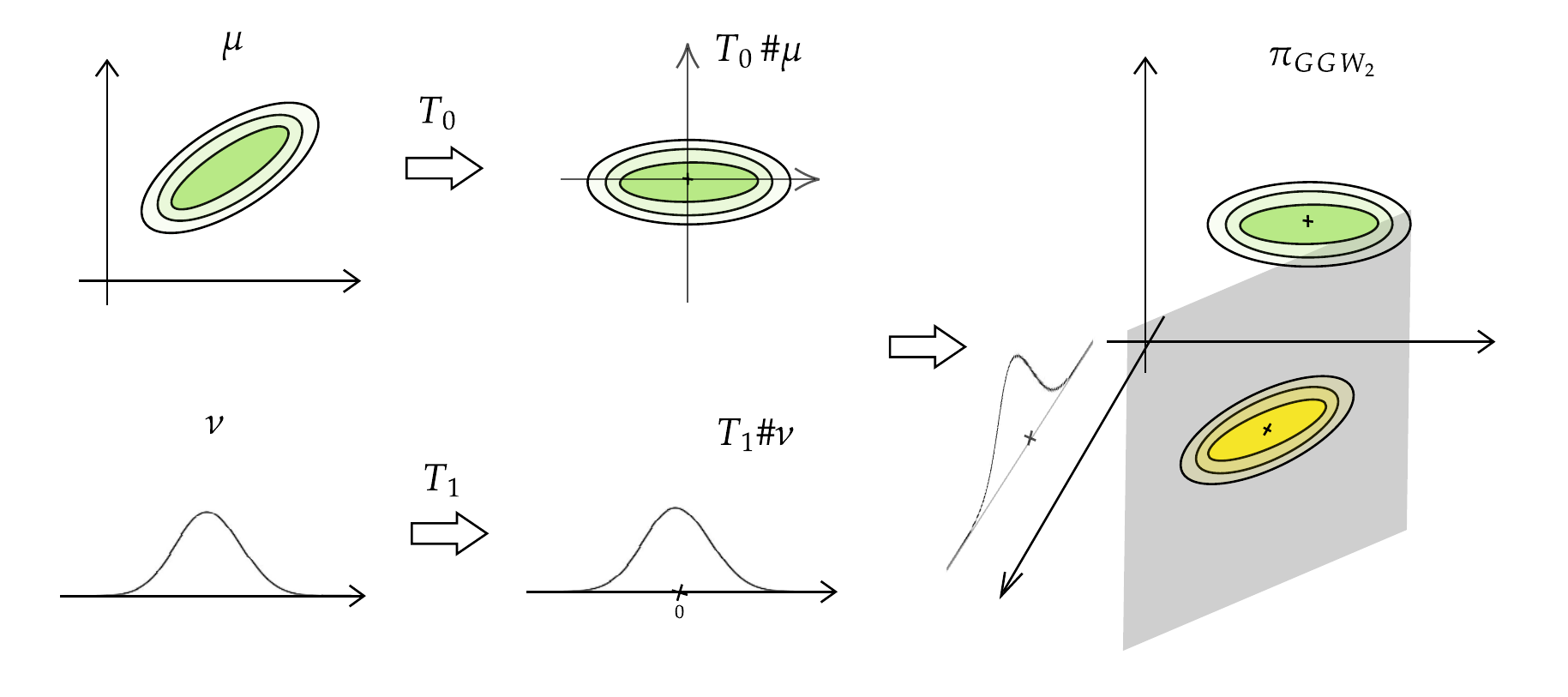}
    \caption{transport plan $ \op_{GGW_2} $ solution of problem \eqref{eq:gromovrestrictedgaussian} with $ m = 2 $  and $ n =1 $.
     In that case, $ \op_{GGW_2} $ is the degenerate Gaussian distribution supported by the plan of equation $ y = T_{W_2}(x) $, where $ T_{W_2} $ is the classic $ W_2 $ optimal transport map when the distributions
    are rotated and centered first. }
    \label{fig:ggw2}
    \centering
    \end{figure}

\paragraph{Case of equal dimensions}
When $ m = n $, the optimal plan $ \op_{GGW_2} $ which achieves $ GGW_2(\m,\n) $  is closely related to the optimal transport plan $ \op_{W_2} = (I_m,T_{W_2})\#T_0\#\m $. Indeed, $ \op_{GGW_2} $ can be simply derived by applying the transformations $ T_0 $ and $ T_1 $ to respectively $ \m $ and $ \n$, then by 
computing $ \op_{W_2} $ between $ T_0\#\m $  and $ T_1\#\n $, and finally by applying the inverse transformations $ T_0^{-1} $ and $ T_1^{-1} $. In other terms, $ \op_{GGW_2} $
can be written
\begin{equation}
\op_{GGW_2} = (I_m,T_1^{-1}\#\tilde{I}_n\#T_{W_2}\#T_0)\#\m.
\end{equation}
An example of transport between two Gaussians measures in dimension $ 2 $ in Figure \ref{fig:ggw2dim2}.

\begin{figure}[!h]
    \centering
    \includegraphics[width=\textwidth]{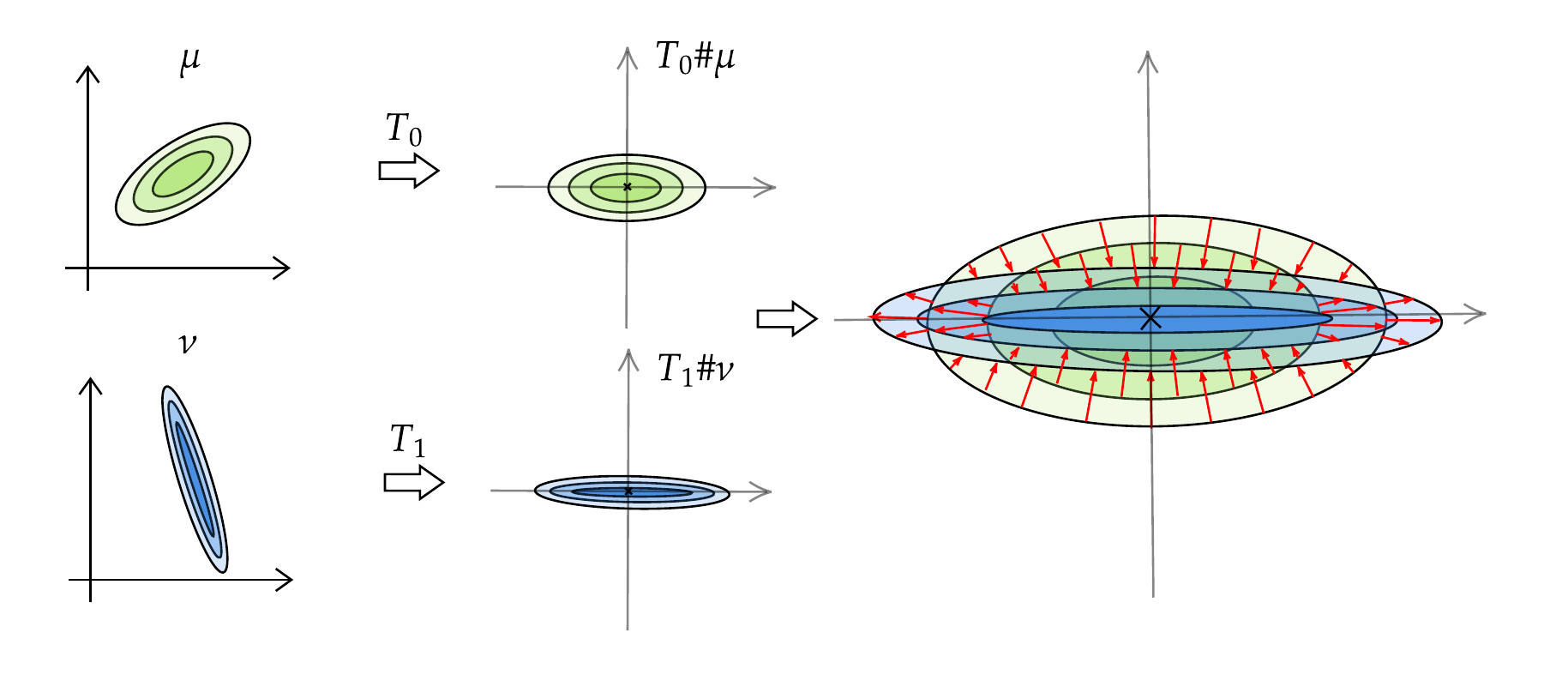}
    \caption{Solution of \eqref{eq:gromovrestrictedgaussian} between two Gaussians measures in dimension $ 2 $. First the distributions are centered and rotated. Then a classic $ W_2 $ transport is applied between the two aligned distributions. }
    \label{fig:ggw2dim2}
    \centering
    \end{figure}

As illustrated in Figure \ref{fig:W2vsGGW2}, the $ GGW_2 $ optimal transport map $ T_{GGW_2} $ defined in Equation \eqref{eq:optimalmapgromov} is not equivalent to the $ W_2$ optimal transport map $ T_{W_2} $ defined in \eqref{eq:mongemapgaussian} even when the dimensions $ m $ and $ n $ are equal. More precisely, it $ \cov_0 $ and $ \cov_1 $ can be diagonalized in the same orthonormal basis with eigenvalues in the same order (decreasing or increasing), then $ T_{W_2} $ and $ T_{GGW_2} $ are equivalent (top of Figure  \ref{fig:W2vsGGW2}). On the other hand, if $ \cov_0 $ and $ \cov_1 $ can be diagonalized in the same orthonormal basis but with eigenvalues not in the same order,  $ T_{W_2} $ and $ T_{GGW_2} $ will have very different behaviors (bottom of  Figure  \ref{fig:W2vsGGW2}). Between those two extreme cases, we can say that the closer the columns of $ P_0 $ will be collinear to the columns of $ P_1 $ (with the eigenvalues in decreasing order), the more $ T_{W_2} $ and $ T_{GGW_2} $ will tend to have similar behaviors (middle of Figure \ref{fig:W2vsGGW2}).

\begin{figure}[!h]
        \begin{subfigure}[b]{0.32\textwidth}
            \includegraphics[width=\textwidth]{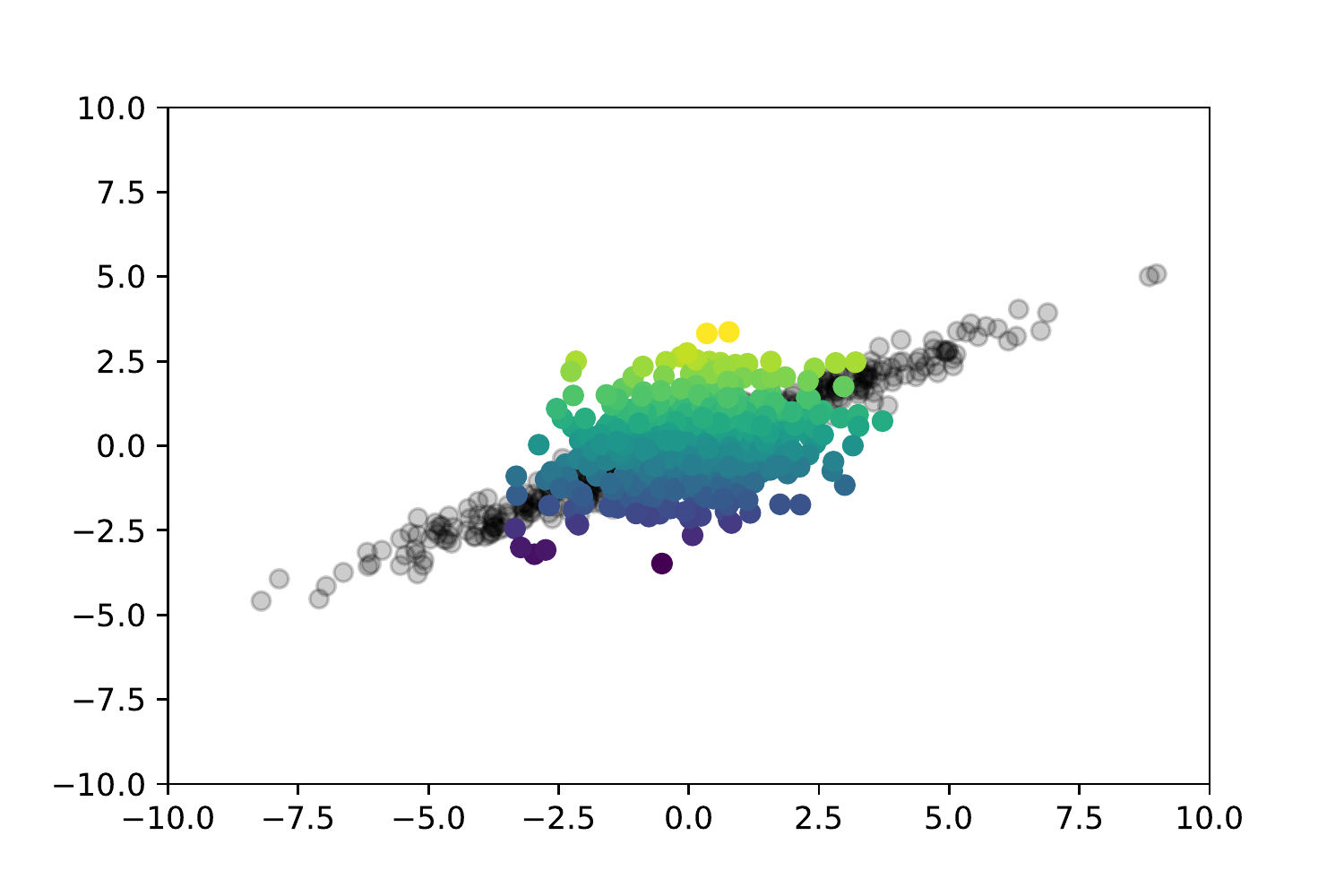}
        \end{subfigure}
        \begin{subfigure}[b]{0.32\textwidth}
            \includegraphics[width=\textwidth]{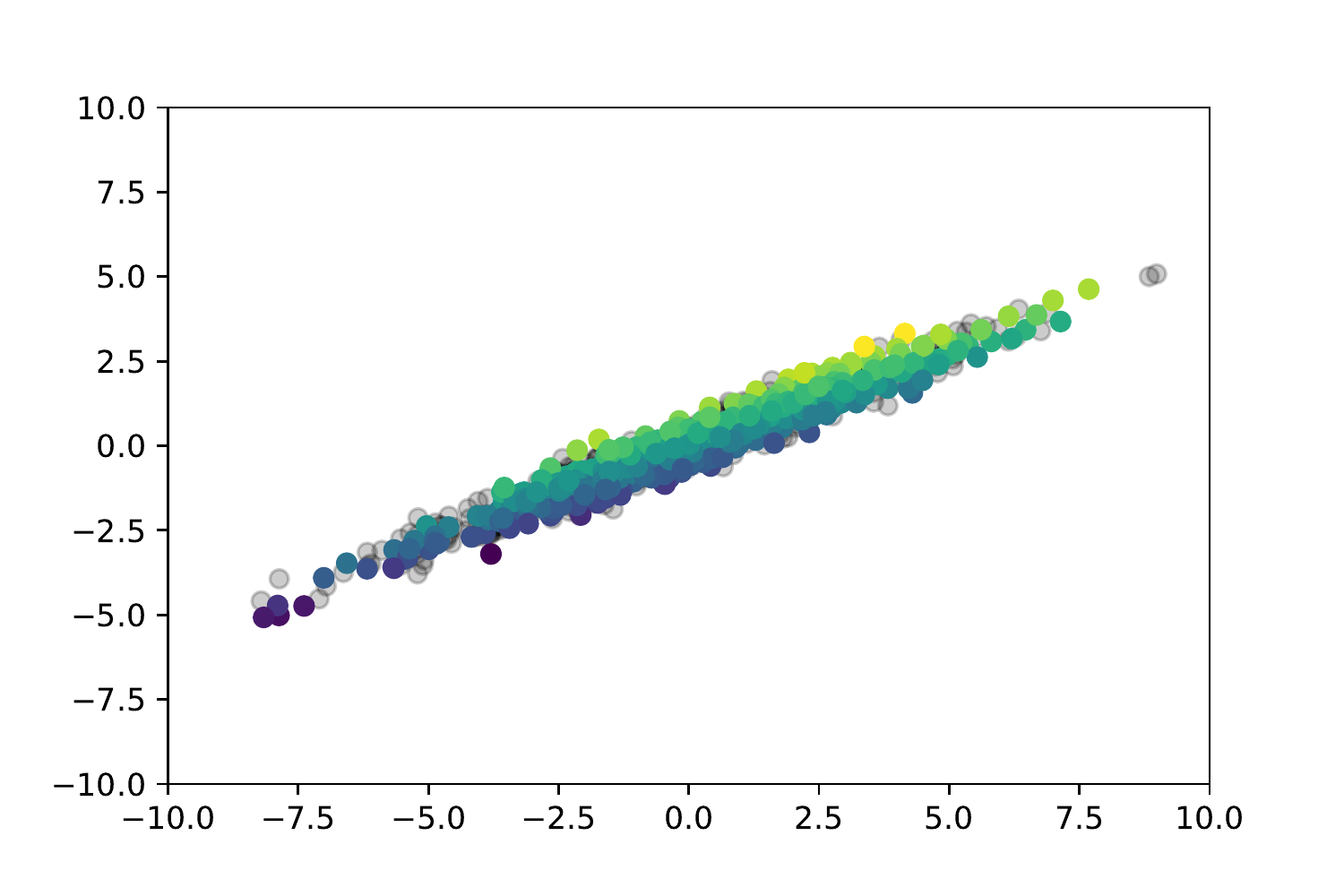}
        \end{subfigure}
        \begin{subfigure}[b]{0.32\textwidth}
            \includegraphics[width=\textwidth]{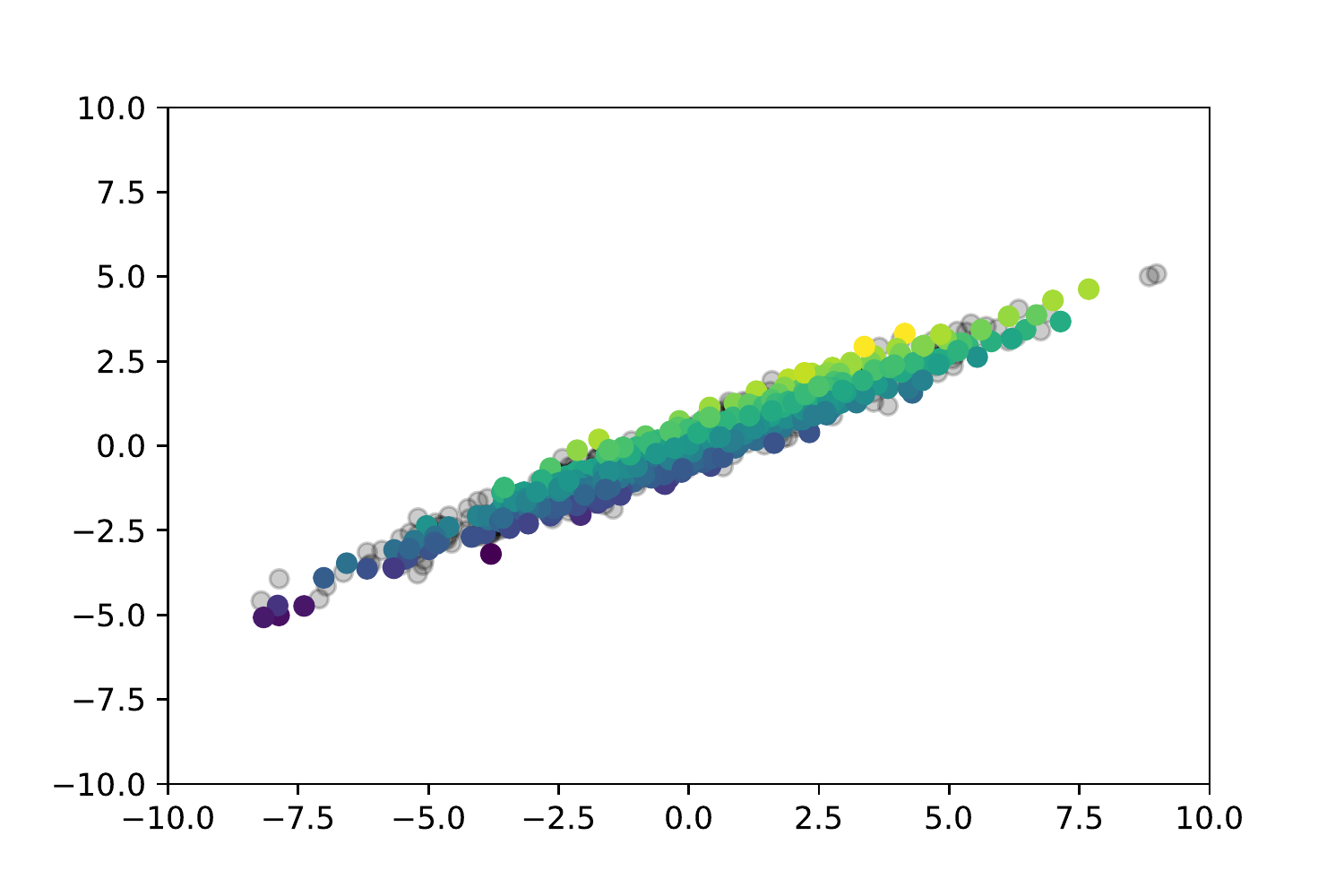}
        \end{subfigure}
        \centering
        \begin{subfigure}[b]{0.32\textwidth}
            \includegraphics[width=\textwidth]{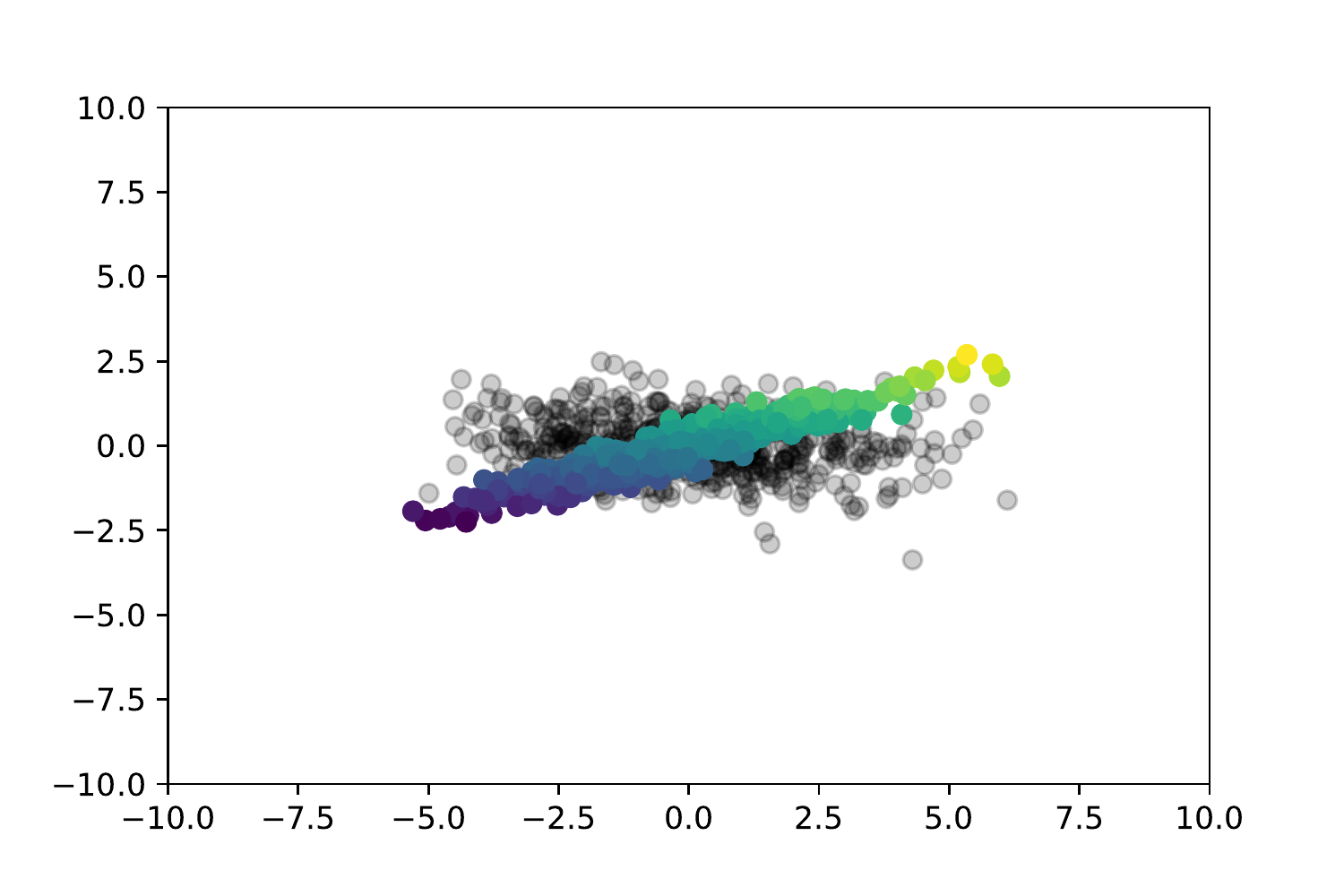}
        \end{subfigure}
        \begin{subfigure}[b]{0.32\textwidth}
            \includegraphics[width=\textwidth]{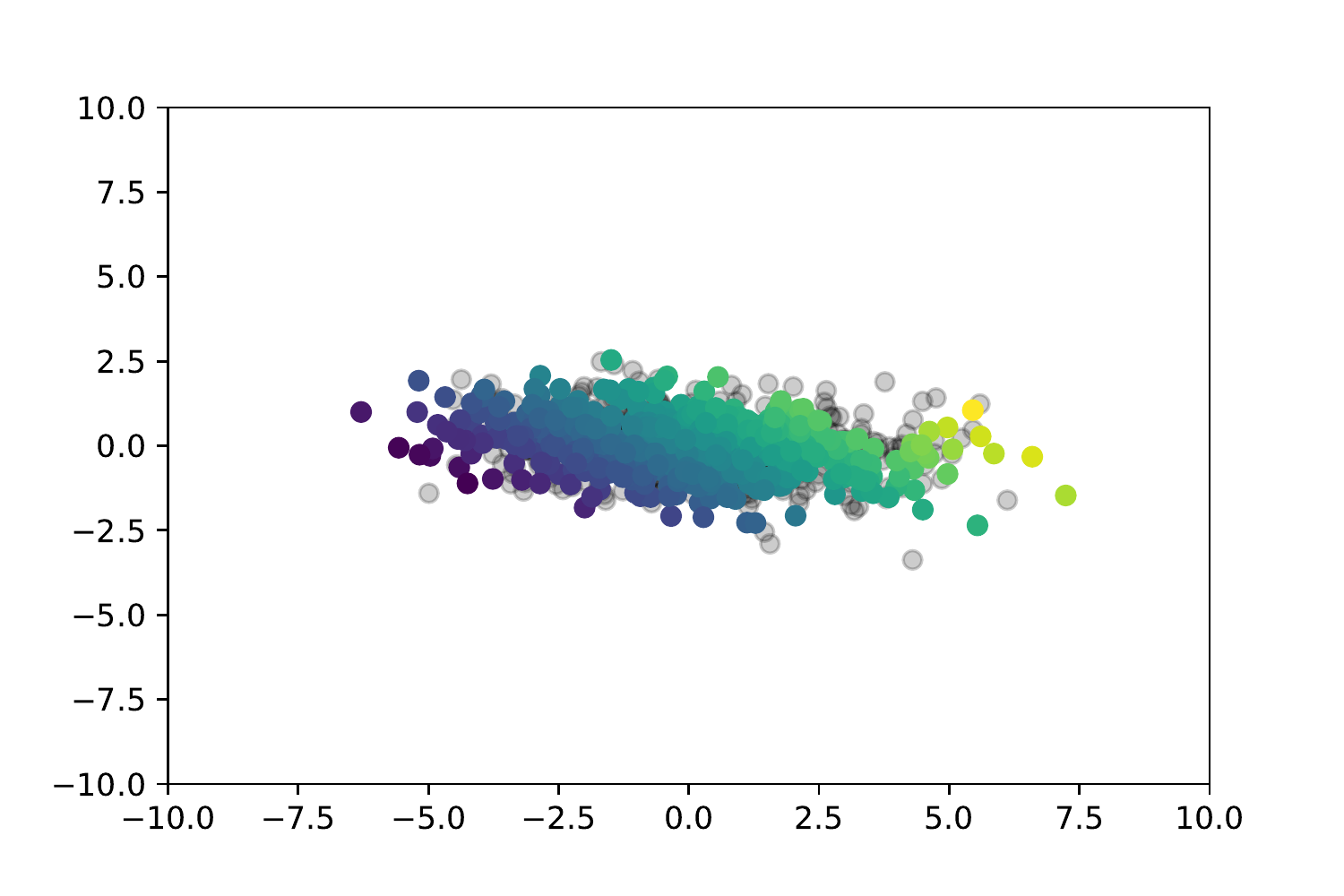}
        \end{subfigure}
        \begin{subfigure}[b]{0.32\textwidth}
            \includegraphics[width=\textwidth]{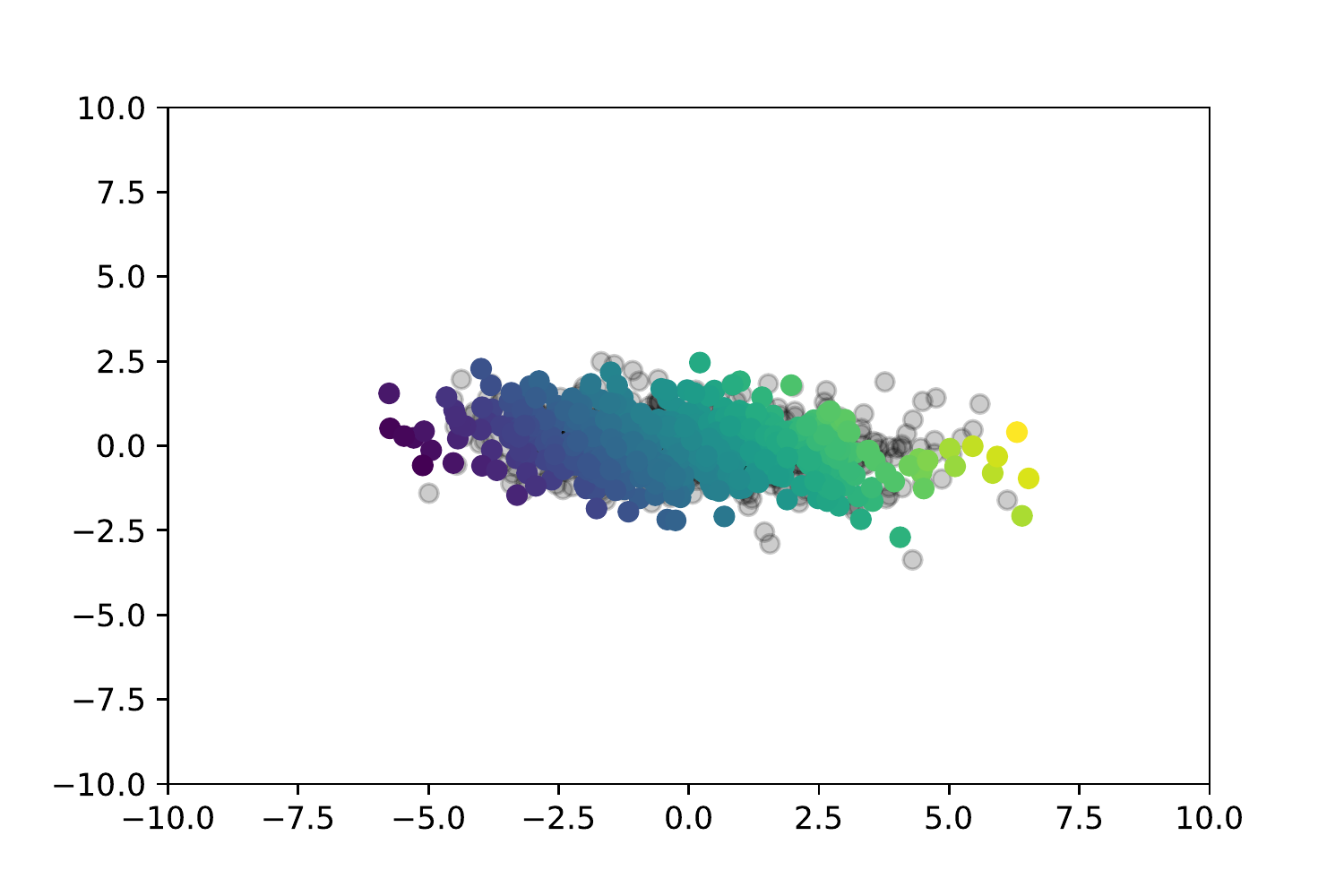}
        \end{subfigure}
        \begin{subfigure}[b]{0.32\textwidth}
            \includegraphics[width=\textwidth]{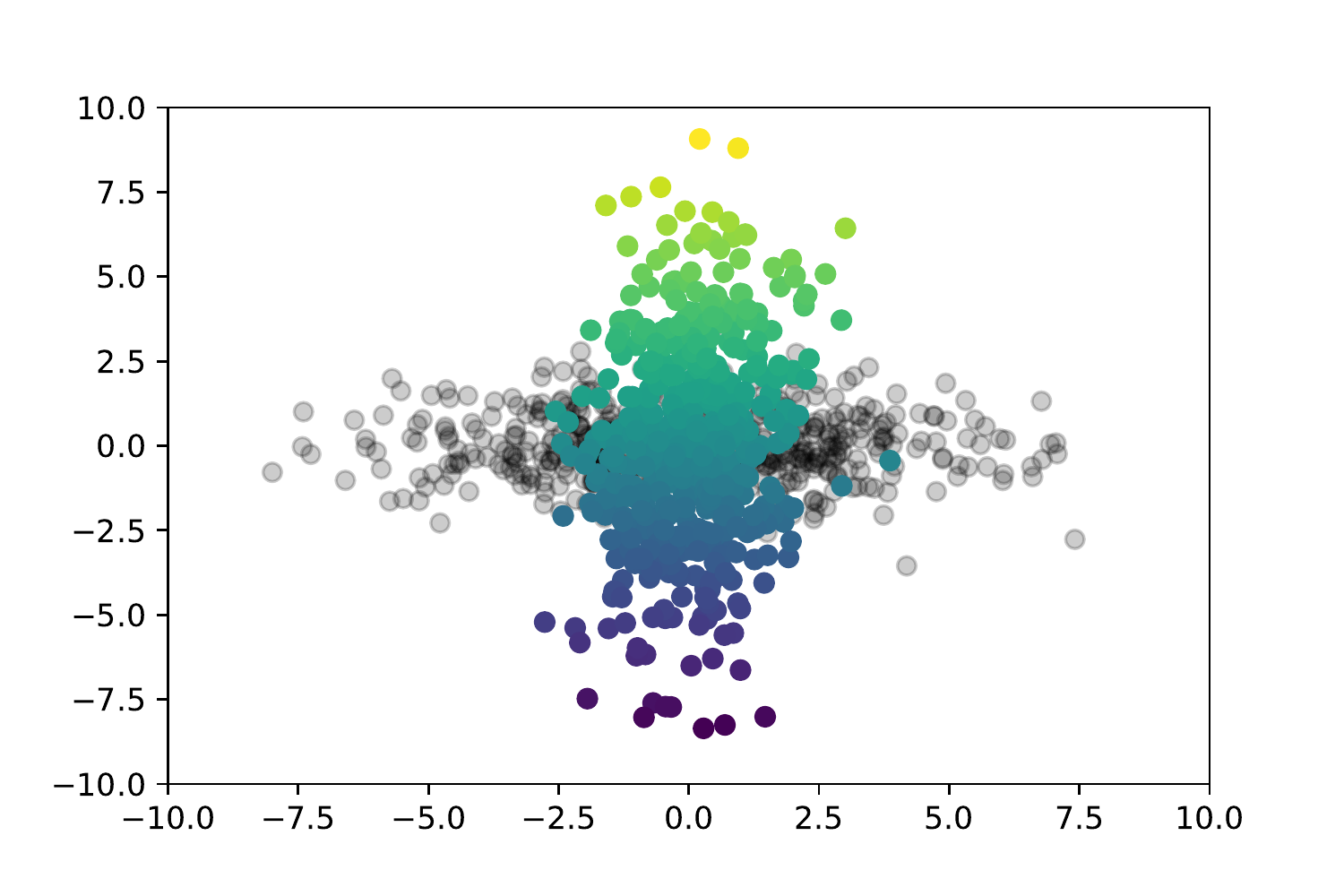}
        \end{subfigure}
        \begin{subfigure}[b]{0.32\textwidth}
            \includegraphics[width=\textwidth]{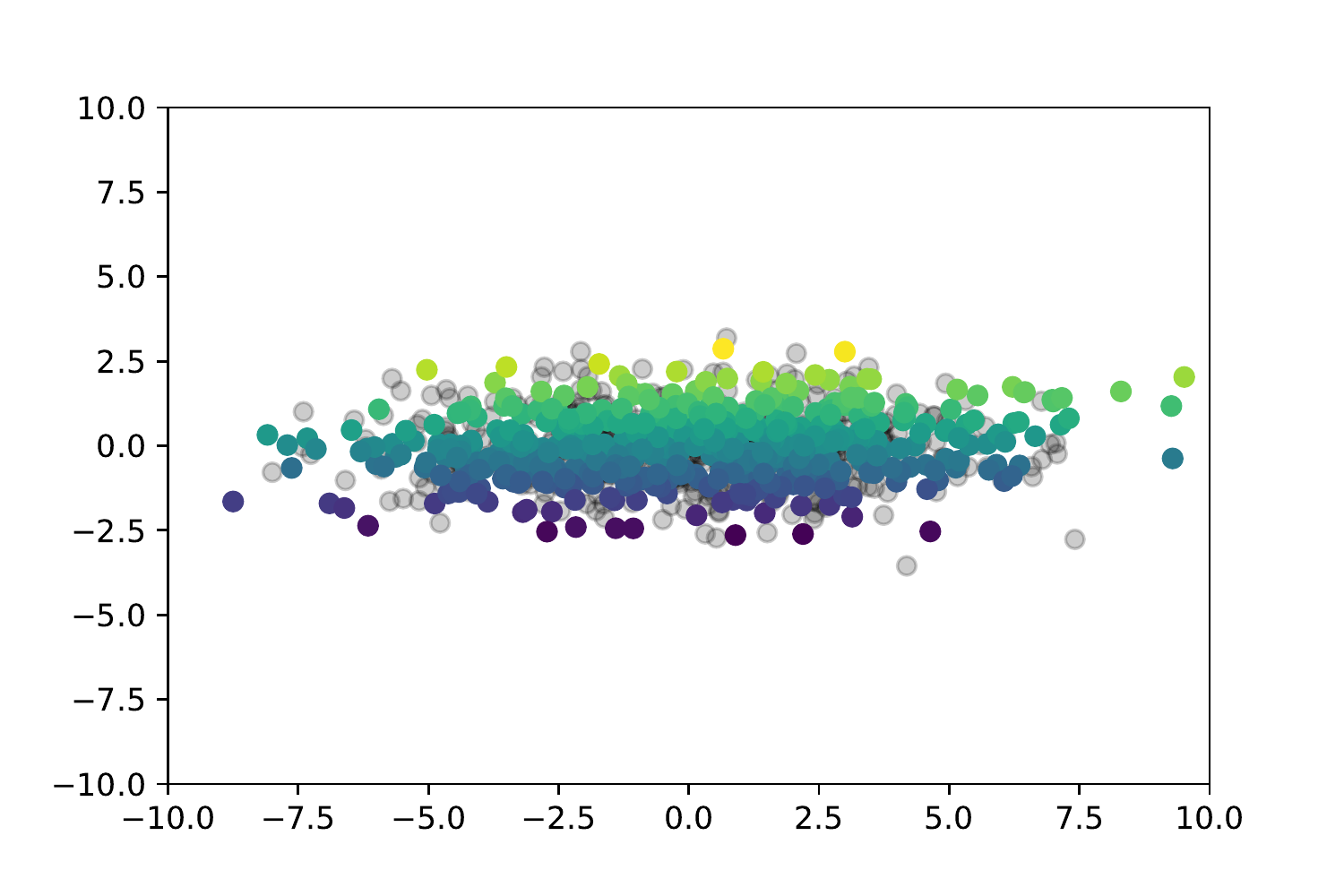}
        \end{subfigure}
        \begin{subfigure}[b]{0.32\textwidth}
            \includegraphics[width=\textwidth]{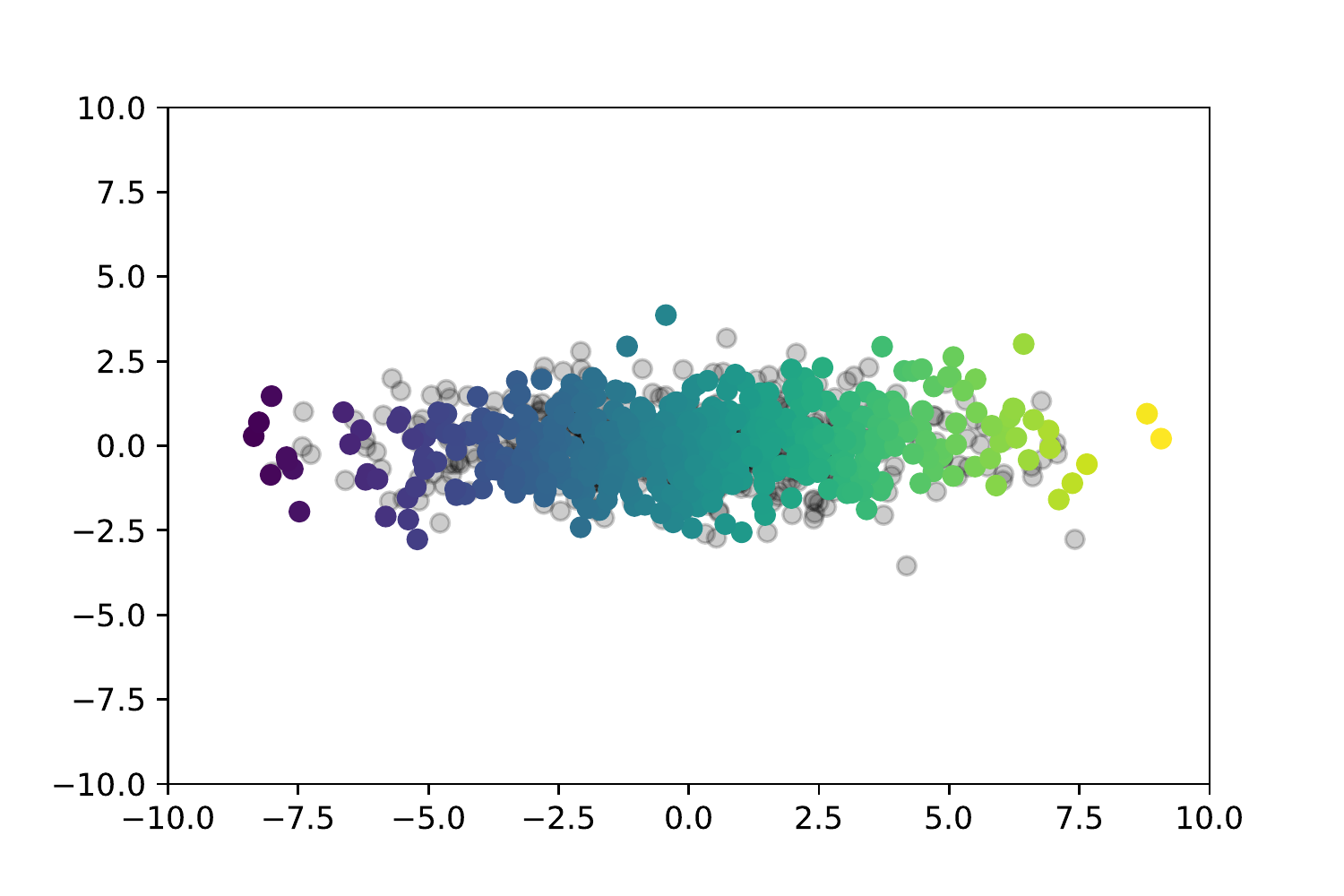}
        \end{subfigure}
    \caption{Comparison between $W_2 $  and $ GGW_2 $ mappings between empirical distributions. Left: 2D source distribution (colored) and target distribution (transparent). Middle: resulting mapping of Wasserstein $ T_{W_2} $. Right: resulting mapping of Gaussian Gromov-Wasserstein $ T_{GGW_2} $. The colors are added in order to visualize where each sample has been sent.}
    \label{fig:W2vsGGW2}
\end{figure}

\paragraph{Link with Gromov-Wasserstein with inner product as cost function}
If  $ \m $ and $ \n $ are centered Gaussian measures, let us consider the following problem
\begin{equation}\label{eq:innergw}\tag{innerGW}
GW_2^2(\langle .,. \rangle_m,\langle .,. \rangle_n,\m,\n) = \inf_{\op \in \Pi(\m,\n)} \int \int (\langle x,x'\rangle_m - \langle y,y'\rangle_n)^2d\op(x,y)d\op(x',y').
\end{equation}
Notice that the above problem is not restricted to Gaussian plans, but the following proposition shows that in fact its solution is Gaussian.
\begin{prop}
Suppose $ m \leq n $. Let $\m = \mathcal{N}(0,\cov_0) $ and $ \n =  \mathcal{N}(0,\cov_1) $ be two centered Gaussian measures respectively on $ \reel^m $ and $ \reel^n $. Then
the solution of problem \eqref{eq:gromovrestrictedgaussian} exhibited in theorem \ref{thm:gaussian} is also solution of problem \eqref{eq:innergw}.
\end{prop}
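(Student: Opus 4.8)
The plan is to show that problem~\eqref{eq:innergw}, exactly like the Gaussian-restricted problem~\eqref{eq:gromovrestrictedgaussian}, reduces to maximizing the single quantity $\|\text{Cov}(X,Y)\|_{\spa{F}}^2$, and that for the inner-product cost this reduction is valid over \emph{all} couplings, with no need to invoke Isserlis. The Gaussian plan of Theorem~\ref{thm:gaussian} will then be optimal for~\eqref{eq:innergw} as well.

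First I would apply Lemma~\ref{lemme:vayer} to the cost pair $c_{\spa{X}} = \langle\cdot,\cdot\rangle_m$, $c_{\spa{Y}} = \langle\cdot,\cdot\rangle_n$, that is with $a = b = 0$ and $c = 1$. Since $a = b = 0$, the last term of $Z(\op)$ vanishes, the term $-4ab\int\|x\|^2\|y\|^2 d\m d\n$ in $C_{\m,\n}$ vanishes, and, $\m$ and $\n$ being centered, $\int xy^T d\op = \text{Cov}(X,Y)$ for $(X,Y)\sim\op$. This yields
\begin{equation}
GW_2^2(\langle\cdot,\cdot\rangle_m,\langle\cdot,\cdot\rangle_n,\m,\n) = C_{\m,\n} - 2\sup_{\op \in \Pi(\m,\n)} \|\text{Cov}(X,Y)\|_{\spa{F}}^2,
\end{equation}
with $C_{\m,\n}$ independent of $\op$. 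Thus~\eqref{eq:innergw} is equivalent to maximizing $\|\text{Cov}(X,Y)\|_{\spa{F}}^2$ over all couplings. In contrast with~\eqref{eq:gromovcov}, no fourth-order co-moment appears, so this equivalence already holds without restriction to Gaussian plans.

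Next I would observe that $\|\text{Cov}(X,Y)\|_{\spa{F}}^2$ depends on $\op$ only through its cross-covariance block $K = \text{Cov}(X,Y)$. For any $\op \in \Pi(\m,\n)$ the joint covariance $\begin{pmatrix}\cov_0 & K \\ K^T & \cov_1\end{pmatrix}$ is semi-definite positive, and conversely every semi-definite positive matrix of this block form is the covariance of some Gaussian coupling with marginals $\m$ and $\n$. Hence the set of attainable $K$ is the same for arbitrary and for Gaussian couplings, so that
\begin{equation}
\sup_{\op \in \Pi(\m,\n)} \|\text{Cov}(X,Y)\|_{\spa{F}}^2 = \sup_{\op \in \Pi(\m,\n)\cap\mathcal{N}_{m+n}} \|\text{Cov}(X,Y)\|_{\spa{F}}^2.
\end{equation}
The right-hand side is exactly the quantity maximized in the proof of Theorem~\ref{thm:gaussian}, solved there by Lemma~\ref{lemme:maxnorm2}; since here $m \leq n$, one applies it with the roles of $\m$ and $\n$ interchanged, using the symmetry $GW_2(\m,\n) = GW_2(\n,\m)$. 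The maximizing Gaussian plan $\op^*$ is therefore the one exhibited in Theorem~\ref{thm:gaussian}, and it attains the supremum over all couplings, hence solves~\eqref{eq:innergw}.

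The only delicate point — the main obstacle — is the interchange of suprema in the last display, i.e. that restricting to Gaussian couplings loses nothing. This rests on two facts: the objective is a function of the second-order moments alone, and every admissible cross-covariance is realized by a Gaussian coupling. Both are immediate, but they are precisely what fails for the squared-distance cost, where the fourth-order term $\sum_{i,j}\text{Cov}(X_i^2,Y_j^2)$ in~\eqref{eq:gromovcov} couples moments of different orders and obstructs the same argument.
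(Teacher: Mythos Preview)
Your proof is correct and follows essentially the same approach as the paper: apply Lemma~\ref{lemme:vayer} with $a=b=0$, $c=1$ to reduce \eqref{eq:innergw} to $\sup_{\op\in\Pi(\m,\n)}\|\text{Cov}(X,Y)\|_{\spa{F}}^2$, then invoke Lemma~\ref{lemme:maxnorm2} to conclude that the Gaussian plan of Theorem~\ref{thm:gaussian} attains the supremum. Your intermediate paragraph justifying why the supremum over all couplings coincides with the supremum over Gaussian couplings (the objective depends only on the cross-covariance block, and every admissible block is realized by a Gaussian plan) is a useful clarification that the paper leaves implicit, but it does not change the argument.
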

\begin{proof}  
The proof of this proposition is a direct consequence of lemma \ref{lemme:vayer}: indeed, applying it with $ a = 0 $, $ b = 0 $, and $ c = 1 $, it comes that problem \eqref{eq:innergw}
is equivalent to
\begin{equation}
\sup_{\op \in \Pi(\m,\n)} \left\|\int xy^Td\op(x,y)\right\|_{\spa{F}}^2.
\end{equation}
Since $ \m $  and $ \n $ are centered, it comes that problem \eqref{eq:innergw}
is equivalent to
\begin{equation}
\sup_{X \sim \m, Y \sim \n} \|\text{Cov}(X,Y)\|_{\spa{F}}^2.
\end{equation}
Applying Lemma \ref{lemme:maxnorm2}, it comes that the solution exhibited in Theorem \ref{thm:gaussian} is also solution of problem \eqref{eq:innergw}.
\end{proof}

Since $ GGW_2 $ is the Gromov-Wasserstein problem restricted to Gaussian transport plan, it is clear that \eqref{eq:gromovrestrictedgaussian} is an upper bound
of \eqref{eq:gromovgeneral1}. Combining this result with Theorem \ref{thm:lowbound}, we get the following simple but important result.

\begin{prop}\label{prop:frame} If $ \m = \mathcal{N}(m_0,\cov_0) $ and $ \n = \mathcal{N}(m_1,\cov_1) $ and $ \cov_0 $ is non-singular, then
\begin{equation}
LGW_2^2(\m,\n) \leq GW_2^2(\m,\n) \leq GGW_2^2(\m,\n).
\end{equation}
\end{prop}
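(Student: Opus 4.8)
The plan is to obtain the two inequalities separately, each from a result already established above. For the left inequality $LGW_2^2(\m,\n) \le GW_2^2(\m,\n)$ there is nothing new to do: this is exactly the conclusion of Theorem~\ref{thm:lowbound}, whose hypotheses (two Gaussian measures $\m = \mathcal{N}(m_0,\cov_0)$ and $\n = \mathcal{N}(m_1,\cov_1)$ with $\cov_0$ non-singular) coincide verbatim with those of the present proposition. So I would simply invoke Theorem~\ref{thm:lowbound}.

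For the right inequality $GW_2^2(\m,\n) \le GGW_2^2(\m,\n)$, the argument is a monotonicity-of-infimum observation. Both quantities are infima of the \emph{same} objective functional, namely the map $\op \mapsto \iint (\|x-x'\|_{\reel^m}^2 - \|y-y'\|_{\reel^n}^2)^2\,d\op(x,y)\,d\op(x',y')$, over plans $\op$ with marginals $\m$ and $\n$; the only difference is that \eqref{eq:gromovgeneral1} ranges over all of $\Pi(\m,\n)$, whereas \eqref{eq:gromovrestrictedgaussian} ranges over the subset $\Pi(\m,\n)\cap\mathcal{N}_{m+n}$ of Gaussian plans. Since the infimum of a fixed functional over a smaller feasible set is at least as large as the infimum over the full set, the claimed inequality follows at once. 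To guarantee that the right-hand side is not vacuous (i.e. that the restricted feasible set is nonempty), I would point to Theorem~\ref{thm:gaussian}, which exhibits an explicit Gaussian plan $\op^* = (I_m,T)\#\m \in \Pi(\m,\n)\cap\mathcal{N}_{m+n}$.

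There is no genuine obstacle here: the statement is a direct juxtaposition of Theorem~\ref{thm:lowbound} and the trivial inclusion $\Pi(\m,\n)\cap\mathcal{N}_{m+n} \subseteq \Pi(\m,\n)$ of feasible sets, and the non-singularity of $\cov_0$ is used only to license the application of Theorem~\ref{thm:lowbound} (and of Theorem~\ref{thm:gaussian} for nonemptiness). The only point worth a moment's care is to confirm that the two problems really do share the same objective functional and the same marginal constraints, so that passing from \eqref{eq:gromovgeneral1} to \eqref{eq:gromovrestrictedgaussian} is a bona fide shrinking of the domain rather than a change of problem; once this is checked, the proposition is immediate.
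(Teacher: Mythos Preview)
Your proposal is correct and matches the paper's own reasoning exactly: the paper states just before the proposition that the upper bound is immediate because \eqref{eq:gromovrestrictedgaussian} restricts the feasible set of \eqref{eq:gromovgeneral1}, and that combining this with Theorem~\ref{thm:lowbound} yields the result. Your additional remark on nonemptiness via Theorem~\ref{thm:gaussian} is a harmless (and welcome) extra justification that the paper leaves implicit.
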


\section{Tightness of the bounds and particular cases}\label{sec:5}

\subsection{Bound on the difference}

\begin{prop}\label{prop:bounds}
Suppose without loss of generality that $ n \leq m $, if $ \m = \mathcal{N}(m_0,\cov_0) $ and $ \n = \mathcal{N}(m_1,\cov_1)$, then
\begin{equation}
GGW_2^2(\m,\n) - LGW_2^2(\m,\n) \leq 8\|\cov_0\|_{\spa{F}}\|\cov_1\|_{\spa{F}}\left(1 - \frac{1}{\sqrt{m}}\right).
\end{equation}
\end{prop}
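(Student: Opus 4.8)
The plan is to reduce the claimed inequality to a single clean statement about the two spectra and then prove that statement with two elementary ingredients. First I would substitute the closed forms \eqref{eq:GGW} and \eqref{eq:LGW} and compute the difference term by term. The terms $4(\textup{tr}(D_0)-\textup{tr}(D_1))^2$ cancel, and after expanding $\|D_0^{(n)}-D_1\|_{\spa{F}}^2 = \|D_0^{(n)}\|_{\spa{F}}^2 - 2\,\textup{tr}(D_0^{(n)}D_1) + \|D_1\|_{\spa{F}}^2$ together with $(\|D_0\|_{\spa{F}}-\|D_1\|_{\spa{F}})^2$, I expect all the quadratic terms to cancel and the difference to collapse to
\[
GGW_2^2(\m,\n) - LGW_2^2(\m,\n) = 8\left(\|D_0\|_{\spa{F}}\|D_1\|_{\spa{F}} - \textup{tr}(D_0^{(n)}D_1)\right).
\]
Since the Frobenius norm is invariant under orthogonal conjugation, $\|D_0\|_{\spa{F}} = \|\cov_0\|_{\spa{F}}$ and $\|D_1\|_{\spa{F}} = \|\cov_1\|_{\spa{F}}$, so the proposition becomes exactly the eigenvalue inequality $\textup{tr}(D_0^{(n)}D_1) \geq \tfrac{1}{\sqrt{m}}\|\cov_0\|_{\spa{F}}\|\cov_1\|_{\spa{F}}$.

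Next I would translate this into a statement about spectra. Write the eigenvalues of $\cov_0$ as $\alpha_1 \geq \dots \geq \alpha_m \geq 0$ and those of $\cov_1$ as $\beta_1 \geq \dots \geq \beta_n \geq 0$, the nonnegativity coming from semi-definite positivity. Then $\textup{tr}(D_0^{(n)}D_1) = \sum_{i=1}^n \alpha_i\beta_i$, while $\|\cov_0\|_{\spa{F}}^2 = \sum_{i=1}^m \alpha_i^2$ and $\|\cov_1\|_{\spa{F}}^2 = \sum_{j=1}^n \beta_j^2$, so the goal reduces to
\[
\left(\sum_{i=1}^n \alpha_i\beta_i\right)^2 \geq \frac{1}{m}\left(\sum_{i=1}^m \alpha_i^2\right)\left(\sum_{j=1}^n \beta_j^2\right).
\]

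I would establish this with two observations, both exploiting that the spectra are sorted in the same decreasing order. The first is a truncation bound: since $\alpha$ is nonincreasing one has $\alpha_n^2 \leq \tfrac{1}{n}\sum_{i=1}^n \alpha_i^2$, and each of the $m-n$ tail terms satisfies $\alpha_i^2 \leq \alpha_n^2$, which gives $\sum_{i=1}^m \alpha_i^2 \leq \tfrac{m}{n}\sum_{i=1}^n \alpha_i^2$. The second controls the truncated inner product: dropping the nonnegative cross terms yields $\left(\sum_{i=1}^n \alpha_i\beta_i\right)^2 \geq \sum_{i=1}^n \alpha_i^2\beta_i^2$, and Chebyshev's sum inequality applied to the two similarly-ordered nonnegative sequences $(\alpha_i^2)_{i\leq n}$ and $(\beta_i^2)_{i\leq n}$ gives $\sum_{i=1}^n \alpha_i^2\beta_i^2 \geq \tfrac{1}{n}\left(\sum_{i=1}^n \alpha_i^2\right)\left(\sum_{j=1}^n \beta_j^2\right)$. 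Chaining the two and using the truncation bound in the form $\sum_{i=1}^n\alpha_i^2 \geq \tfrac{n}{m}\sum_{i=1}^m\alpha_i^2$ produces $\left(\sum_{i=1}^n \alpha_i\beta_i\right)^2 \geq \tfrac{1}{m}\left(\sum_{i=1}^m \alpha_i^2\right)\left(\sum_{j=1}^n \beta_j^2\right)$, and taking square roots closes the argument.

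The main obstacle is the lower bound on the truncated inner product $\sum_{i=1}^n \alpha_i\beta_i$: Cauchy--Schwarz only delivers an upper bound, and the crude estimate $\|\cov_0\|_{\spa{F}} \leq \sqrt{m}\,\alpha_1$ combined with $\sum_i \alpha_i\beta_i \geq \alpha_1\beta_1$ is too lossy (it already fails when $\cov_0$ has a spiked spectrum and $\cov_1$ is nearly isotropic). The crucial point is that both spectra are sorted in the same decreasing order, which is exactly the hypothesis that lets Chebyshev's inequality turn a product of sums into a lower bound; everything else is bookkeeping. I would also note that the estimate is sharp — equality propagates through all the steps when $n=1$ and $\cov_0$ is isotropic — confirming that the factor $1-\tfrac{1}{\sqrt{m}}$ cannot be improved in general.
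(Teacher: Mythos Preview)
Your proposal is correct, and the overall architecture matches the paper's: both arguments first collapse the difference to
\[
GGW_2^2(\m,\n) - LGW_2^2(\m,\n) = 8\left(\|D_0\|_{\spa{F}}\|D_1\|_{\spa{F}} - \textup{tr}(D_0^{(n)}D_1)\right),
\]
and then must establish the eigenvalue inequality $\textup{tr}(D_0^{(n)}D_1) \geq \tfrac{1}{\sqrt{m}}\|D_0\|_{\spa{F}}\|D_1\|_{\spa{F}}$. The difference lies in how this last inequality is proved. The paper pads $\beta$ with zeros to length $m$, normalizes both eigenvalue vectors, and invokes a separate lemma stating that two unit vectors in $\reel^m$ with nonnegative decreasing coordinates have inner product at least $1/\sqrt{m}$; that lemma is then proved by induction on $m$. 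Your route is more direct: the truncation estimate $\sum_{i\leq m}\alpha_i^2 \leq \tfrac{m}{n}\sum_{i\leq n}\alpha_i^2$ handles the dimension mismatch, and the chain $(\sum\alpha_i\beta_i)^2 \geq \sum\alpha_i^2\beta_i^2 \geq \tfrac{1}{n}(\sum\alpha_i^2)(\sum\beta_i^2)$ via Chebyshev's sum inequality delivers the bound in one stroke without any induction. Your argument is shorter and makes the role of the common ordering more transparent (it is precisely the Chebyshev hypothesis), while the paper's inductive proof has the minor advantage of isolating the inequality as a standalone lemma about unit vectors. Both identify the same extremal configuration ($n=1$, isotropic $\cov_0$), confirming sharpness.
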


To prove this proposition, we will use the following technical result (the proof is postponed to the Appendix (Section \ref{sec:appendix})):
\begin{lemma}\label{lemme:innerproduct}
Let $ u \in \reel^m $ and $ v \in \reel^m  $ be two unit vectors with non-negative coordinates ordered in decreasing order. Then
\begin{equation}
u^Tv \geq \frac{1}{\sqrt{m}},
\end{equation}
with equality if $ u = (\frac{1}{\sqrt{m}},\frac{1}{\sqrt{m}},\dots)^T $ and $ v = (1,0,\dots)^T $.
\end{lemma}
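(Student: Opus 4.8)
The plan is to reduce the bilinear quantity $\langle u,v\rangle = \sum_{i=1}^m u_i v_i$ to its partial sums and to exploit the monotonicity of both vectors through two summations by parts. Introduce the partial sums $U_k = \sum_{i=1}^k u_i$ and $V_k = \sum_{i=1}^k v_i$, and adopt the convention $v_{m+1}=0$. A first Abel transformation gives the identity $\langle u,v\rangle = \sum_{k=1}^m (v_k - v_{k+1})\,U_k$, in which every increment $v_k - v_{k+1}$ is nonnegative because $v$ has decreasing coordinates. It therefore suffices to bound the partial sums $U_k$ from below.

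Next I would establish the two estimates of different nature that drive the proof. On one side, using that $u$ is unit, nonnegative and decreasing, I would prove $U_k \ge \sqrt{k/m}$: since $\sum_{i\le k}u_i^2 \le U_k^2$ and, from $u_i \le u_k \le U_k/k$ for $i>k$, $\sum_{i>k}u_i^2 \le (m-k)(U_k/k)^2$, one gets $1 = \|u\|^2 \le U_k^2 + (m-k)(U_k/k)^2$, hence $U_k^2 \ge k^2/(k^2+m-k) \ge k/m$, the last step being equivalent to $(k-1)(m-k)\ge 0$. Plugging this into the identity above yields $\langle u,v\rangle \ge \frac{1}{\sqrt m}\sum_{k=1}^m (v_k - v_{k+1})\sqrt k$. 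On the other side, a second (reverse) summation by parts produces the exact identity $\|v\|^2 = \sum_{k=1}^m (v_k - v_{k+1})\,V_k$, so the normalization $\|v\|=1$ reads $\sum_k (v_k - v_{k+1})V_k = 1$. Comparing the two sums term by term, and invoking Cauchy-Schwarz in the form $V_k \le \sqrt k\,\|v\| = \sqrt k$ together with $v_k - v_{k+1}\ge 0$, I obtain $\sum_k (v_k - v_{k+1})\sqrt k \ge \sum_k (v_k - v_{k+1})V_k = 1$, whence $\langle u,v\rangle \ge 1/\sqrt m$.

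The equality case is then immediate: for $u = (1/\sqrt m,\dots,1/\sqrt m)^T$ and $v=(1,0,\dots,0)^T$ a direct computation gives $\langle u,v\rangle = 1/\sqrt m$; one may also check that every intermediate inequality is saturated (here $U_1 = \sqrt{1/m}$ and $V_1 = \sqrt1$), which confirms that the constant is sharp.

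The main obstacle is quantitative sharpness rather than mere positivity: crude routes — for instance Chebyshev's sum inequality combined with $\|w\|_1 \ge \|w\|_2$ for nonnegative vectors — only deliver the weaker bound $1/m$, losing a square-root factor. The delicate point is to couple the two partial-sum estimates of genuinely different origin, namely the lower bound on $U_k$ coming from the decreasingness and normalization of $u$, and the upper bound $V_k \le \sqrt k$ coming from Cauchy-Schwarz on $v$, through the exact telescoping identity $\sum_k (v_k - v_{k+1})V_k = \|v\|^2$. It is precisely this identity that absorbs the normalization constraint with no loss and produces the optimal constant $1/\sqrt m$.
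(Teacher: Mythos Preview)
Your proof is correct and takes a genuinely different route from the paper's. The paper proceeds by induction on $m$: it splits off the first coordinate, applies the induction hypothesis to the normalized tails $(u_2,\dots,u_m)$ and $(v_2,\dots,v_m)$ in $\Gamma_{m-1}$, and is left with showing that the function $F(u_1,v_1)=u_1v_1+\frac{1}{\sqrt{m-1}}\sqrt{1-u_1^2}\sqrt{1-v_1^2}$ is bounded below by $1/\sqrt m$ on $[1/\sqrt m,1]^2$, which it handles by a short calculus argument (monotonicity of $\partial F/\partial u_1$ and endpoint checks). Your argument instead performs two Abel summations: the first reduces $\langle u,v\rangle$ to a nonnegative combination of the partial sums $U_k$, which you bound below by $\sqrt{k/m}$ via the neat inequality $1\le U_k^2+(m-k)(U_k/k)^2$; the second expresses $\|v\|^2$ exactly as $\sum_k(v_k-v_{k+1})V_k$, and the Cauchy--Schwarz bound $V_k\le\sqrt k$ then closes the loop. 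What your approach buys is a calculus-free, non-inductive argument in which the roles of the two normalizations are cleanly separated (that of $u$ enters through the $U_k\ge\sqrt{k/m}$ estimate, that of $v$ through the telescoping identity); the paper's induction is conceptually simpler to set up but pays for it with the endpoint analysis of $F$.
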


\begin{proof}[Proof of Proposition \ref{prop:bounds}]
By subtracting \eqref{eq:LGW} from \eqref{eq:GGW}, it comes that
    \begin{equation}\label{eq:cauchy}
    \begin{split}
    GGW_2^2(\m,\n) - LGW_2^2(\m,\n) &= 8\left(\|D_0\|_{\spa{F}}\|D_1\|_{\spa{F}} - \text{tr}(D_0^{(n)}D_1)\right) \\
    &= 8\left(\|D_0\|_{\spa{F}}\|D_1^{[m]}\|_{\spa{F}} - \text{tr}(D_0D_1^{[m]})\right),
    \end{split}
    \end{equation}
    where $ D_1^{[m]} = \begin{pmatrix} D_1 & 0 \\ 0 & 0 \end{pmatrix} \in \reel^{m \times m} $. Noting $ \alpha \in \reel^m $ and $ \beta \in \reel^m $ the vectors
    of eigenvalues of $ D_0 $ and $ D_1^{[m]} $, it comes 
    \begin{equation}
    GGW_2^2(\m,\n) - LGW_2^2(\m,\n) = 8(\|\alpha\|\|\beta\| - \alpha^T\beta) = 8\|\alpha\|\|\beta\|(1 - u^Tv),
    \end{equation}
    where $ u = \frac{\alpha}{\|\alpha\|} $ and $ v = \frac{\beta}{\|\beta\|} $ .
    Applying lemma \ref{lemme:innerproduct}, we get directly that
    \begin{equation}
    \begin{split}
    GGW_2^2(\m,\n) - LGW_2^2(\m,\n) &\leq 8\|D_0\|_{\spa{F}}\|D_1^{[m]}\|_{\spa{F}}\left(1 - \frac{1}{\sqrt{m}}\right). \\
    &\quad = 8\|\cov_0\|_{\spa{F}}\|\cov_1\|_{\spa{F}}\left(1 - \frac{1}{\sqrt{m}}\right).
    \end{split}
    \end{equation}
    \end{proof}
    
    The difference between $ GGW_2^2(\m,\n) $ and $ LGW_2^2(\m,\n) $ can be seen as the difference between the right and left terms of the Cauchy-Schwarz inequality applied to the two vectors 
    of eigenvalues $ \alpha  \in \reel^m $ and $ \beta \in \reel^m $. The difference is maximized when the vectors $ \alpha $ and $ \beta $ are the least collinear possible. This happens when the eigenvalues of $ D_0 $ are all equal and $ n = 1 $ or $ \n $ is degenerate of true dimension $ 1 $. 
    On the other hand , this difference is null when $ \alpha $ and $ \beta $ are collinear. 
    Between those two extremal cases, we can say that the difference between $ GGW_2^2(\m,\n) $ and $ LGW_2^2(\m,\n) $ will be relatively small if the last $ m - n $ eigenvalues $ D_0 $ are small compared to the $ n $ 
    first eigenvalues and if the $ n $ first eigenvalues are close to be proportional to the eigenvalues of $ D_1 $. An example in the case where $ m = 2 $ and $ n = 1$ can be found in Figure \ref{fig:expe3}.

    \begin{figure}[!h]
        \centering
        \begin{subfigure}[b]{0.32\textwidth}
            \includegraphics[width=\textwidth]{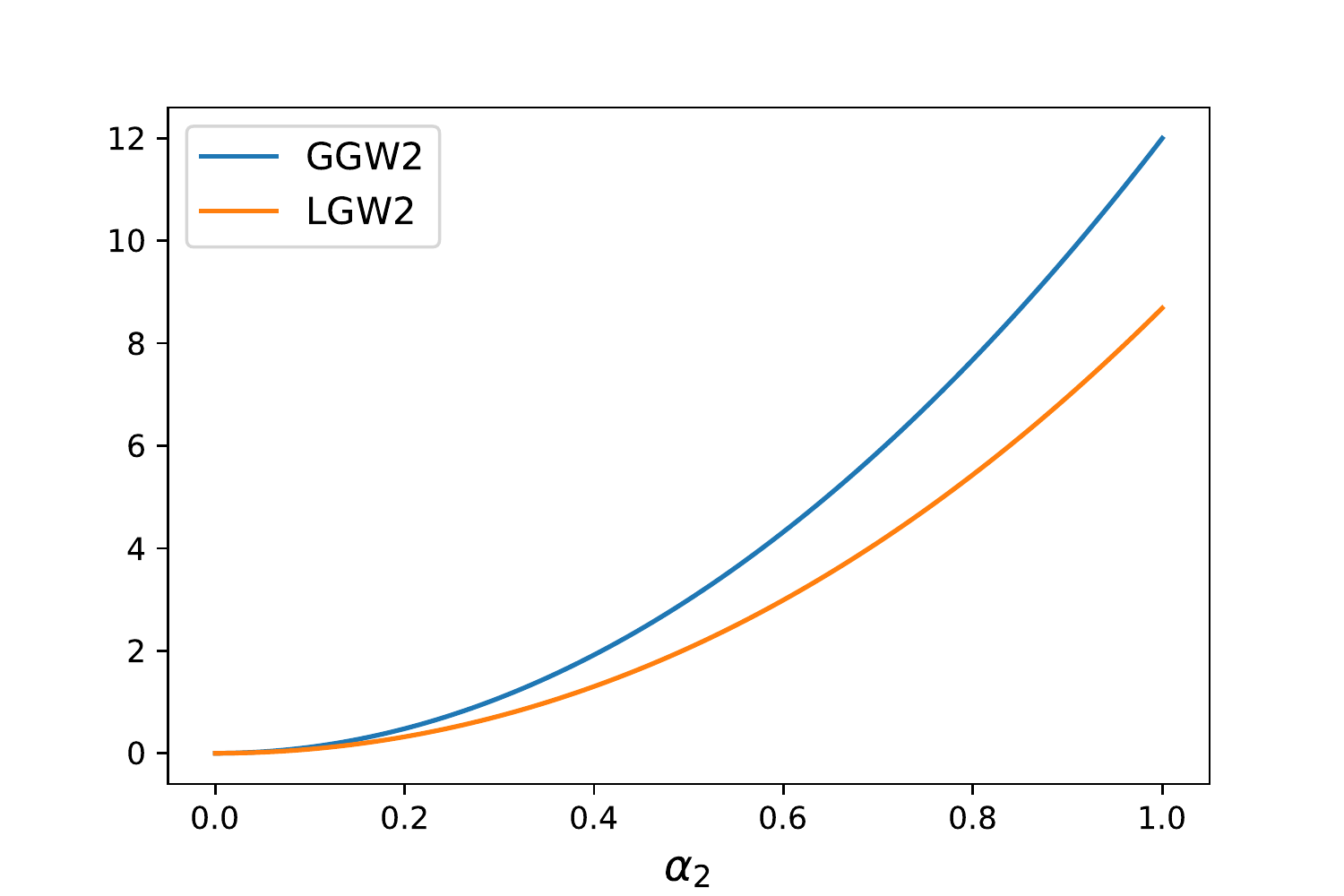}
        \end{subfigure}
        \begin{subfigure}[b]{0.32\textwidth}
            \includegraphics[width=\textwidth]{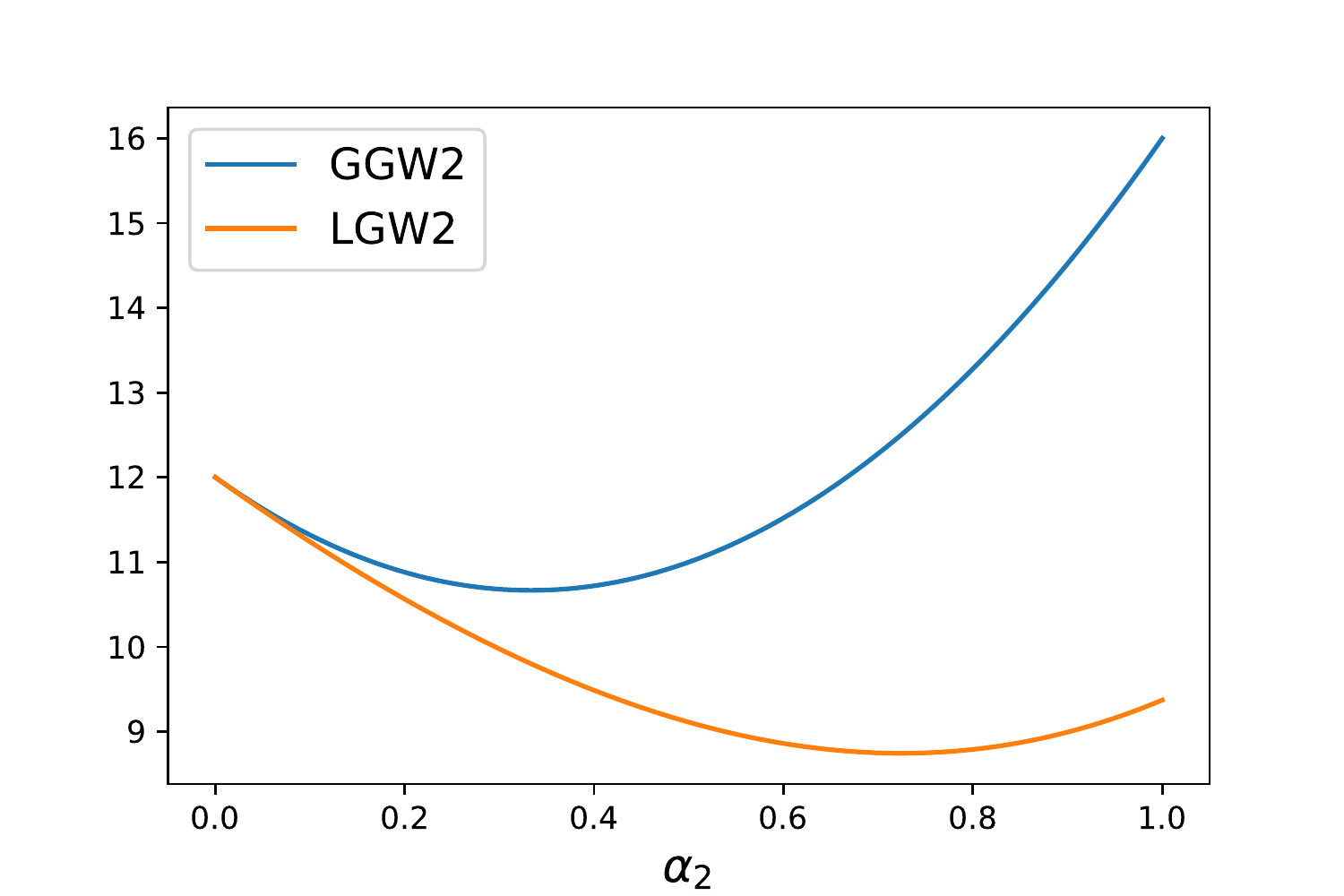}
        \end{subfigure}
        \begin{subfigure}[b]{0.32\textwidth}
            \includegraphics[width=\textwidth]{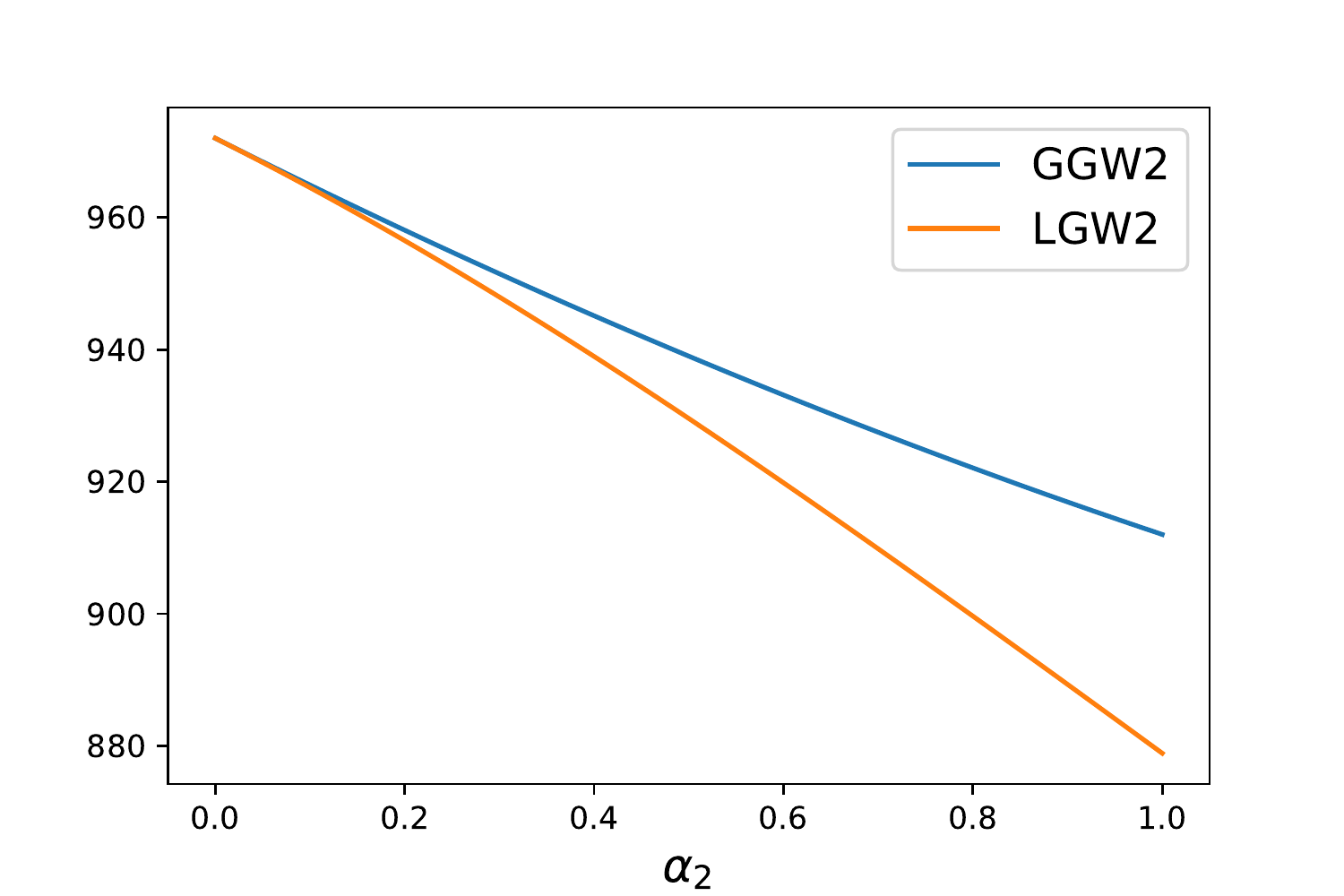}
        \end{subfigure}
    \caption{plot of $ GGW_2^2(\m,\n) $ and $ LGW_2^2(\m,\n) $ in function of $ \alpha_2 $ for $ \m = \mathcal{N}(0,\text{diag}(\alpha)) $, $ \n = \mathcal{N}(0,\beta_1) $, $ \alpha = (\alpha_1,\alpha_2)^T $, for 
    $(\alpha_1,\beta_1) = (1,1) $ (left), $ (\alpha_1,\beta_1) = (1,2) $ (middle), $ (\alpha_1,\beta_1) = (1,10) $ (right). One can easily compute using \eqref{eq:GGW} and \eqref{eq:LGW}
    that $ GGW_2^2(\m,\n) = 12\alpha_2^2 + 8\alpha_2(\alpha_1 - \beta_1) + 12(\alpha_1 - \beta_1)^2 $ and $ LGW_2^2(\m,\n) = 12\alpha_2^2 + 8\alpha_2(\alpha_1 - \beta_1) - 4\sqrt{\alpha_2^2 + \alpha_1^2}\beta_1 + 12(\alpha_1 - \beta_1)^2 + 8\alpha_1\beta_1$.}
    \label{fig:expe3}
    \end{figure}

\subsection{Explicit case}
As seen before, the difference between $ GGW_2^2(\m,\n) $ and $ LGW_2^2(\m,\n) $, with $ \m = \mathcal{N}(m_0,\Sigma_0) $ and $ \n = \mathcal{N}(m_1,\cov_1) $, is null when the two vectors of eigenvalues of $ \cov_0 $ and
$\cov_1 $ (sorted in decreasing order) are collinear. When we suppose $ \cov_0 $ non-singular, it implies that $ m = n $ and that the eigenvalues of $ \cov_1 $ are proportional to the eigenvalues of $ \cov_0 $ (rescaling). This case includes the more particular case where $ m = n = 1$. In that case $ \m = \mathcal{N}(m_0,\sigma_0^2) $
and $ \n = \mathcal{N}(m_1,\sigma_1^2) $, because $ \sigma_1 $ is always proportional to $ \sigma_0 $.

\begin{prop}
Suppose $ m = n $. Let $ \m = \mathcal{N}(m_0,\cov_0) $ and $ \n = \mathcal{N}(m_1,\cov_1) $ two Gaussian measures on $ \reel^m = \reel^n $. 
Let $ P_0,D_0 $ and $ P_1,D_1 $ be the respective diagonalizations of  $\cov_0 (= P_0D_0P_0^T)$ and $ \cov_1 (=P_1D_1P_1^T) $ which sort eigenvalues in non-increasing order.
Suppose $ \cov_0 $ is non-singular and that it exists a scalar $ \lambda \geq 0 $ such that $ D_1 = \lambda D_0 $. 
In that case, $ GW_2^2(\m,\n) = GGW_2^2(\m,\n) = LGW_2^2(\m,\n) $ and the problem admits a solution of the form $ (I_m, T)\#\m$ with T affine of the form:
\begin{equation}\label{eq:Tparticuliar}
\forall x \in \reel^m, \ T(x) = m_1 + \sqrt{\lambda} P_1\tilde{I}_mP_0^T(x - m_0),
\end{equation}
where $ \tilde{I}_m $ of the form $ \textup{diag}((\pm 1)_{i \leq m}) $. Moreover
\begin{equation}\label{eq:GW2particuliar}
GW_2^2(\m,\n) = (\lambda - 1)^2\left(4(\textup{tr}(\cov_0))^2 + 8\|\cov_0\|_{\spa{F}}^2\right).
\end{equation}
\end{prop}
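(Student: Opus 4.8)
The plan is to squeeze $GW_2^2(\m,\n)$ between the lower bound $LGW_2^2(\m,\n)$ and the upper bound $GGW_2^2(\m,\n)$ furnished by Proposition~\ref{prop:frame}, and to observe that the collinearity hypothesis $D_1 = \lambda D_0$ forces the two bounds to coincide. Once $LGW_2^2(\m,\n) = GGW_2^2(\m,\n)$ is established, the sandwich collapses and all three quantities are equal; in particular the Gaussian plan $(I_m,T)\#\m$ that achieves $GGW_2^2(\m,\n)$ in Theorem~\ref{thm:gaussian} attains the unrestricted infimum in \eqref{eq:gromovgeneral1} as well, and is therefore a genuine solution of the full Gromov-Wasserstein problem. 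This is the mechanism that upgrades the Gaussian-restricted optimizer to a global one.

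First I would show $LGW_2^2(\m,\n) = GGW_2^2(\m,\n)$. Reusing the identity \eqref{eq:cauchy} from the proof of Proposition~\ref{prop:bounds}, specialized to $m = n$ (so that $D_0^{(n)} = D_0$), the gap is exactly the Cauchy-Schwarz defect
\begin{equation}
GGW_2^2(\m,\n) - LGW_2^2(\m,\n) = 8\left(\|D_0\|_{\spa{F}}\|D_1\|_{\spa{F}} - \textup{tr}(D_0D_1)\right) = 8\|\alpha\|\|\beta\|\left(1 - u^Tv\right),
\end{equation}
where $\alpha,\beta$ are the eigenvalue vectors of $D_0,D_1$ and $u,v$ their normalizations. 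The hypothesis $D_1 = \lambda D_0$ gives $\beta = \lambda\alpha$, hence $u = v$, the inequality $u^Tv \le 1$ is saturated, and the defect vanishes. Together with $LGW_2^2(\m,\n) \leq GW_2^2(\m,\n) \leq GGW_2^2(\m,\n)$ this yields $GW_2^2(\m,\n) = GGW_2^2(\m,\n) = LGW_2^2(\m,\n)$.

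Next I would specialize the map \eqref{eq:optimalmapgromov} and the value \eqref{eq:GGW} of Theorem~\ref{thm:gaussian}. For the map, with $m=n$ the block $0_{n,m-n}$ is empty and $A = \tilde{I}_m\mroot{D_1}\imroot{D_0}$; since $D_0$ is diagonal and invertible, $\mroot{D_1} = \sqrt{\lambda}\,\mroot{D_0}$, so $\mroot{D_1}\imroot{D_0} = \sqrt{\lambda}\,I_m$ and $A = \sqrt{\lambda}\,\tilde{I}_m$, recovering $T(x) = m_1 + \sqrt{\lambda}\,P_1\tilde{I}_mP_0^T(x-m_0)$. For the value, setting $m=n$ in \eqref{eq:GGW} kills the last term (as $D_0^{(n)}=D_0$), while $\textup{tr}(D_1) = \lambda\,\textup{tr}(D_0)$ and $\|D_0^{(n)} - D_1\|_{\spa{F}} = |1-\lambda|\,\|D_0\|_{\spa{F}}$, so that
\begin{equation}
GW_2^2(\m,\n) = 4(1-\lambda)^2(\textup{tr}(D_0))^2 + 8(1-\lambda)^2\|D_0\|_{\spa{F}}^2 = (\lambda - 1)^2\left(4(\textup{tr}(\cov_0))^2 + 8\|\cov_0\|_{\spa{F}}^2\right),
\end{equation}
using the orthogonal invariances $\textup{tr}(D_0) = \textup{tr}(\cov_0)$ and $\|D_0\|_{\spa{F}} = \|\cov_0\|_{\spa{F}}$, which gives exactly \eqref{eq:GW2particuliar}.

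The argument is essentially a substitution into already-proved identities, so I do not expect a genuine obstacle. The only delicate points are purely bookkeeping: checking that $\tilde{I}_m$ commutes with the diagonal matrices $\mroot{D_0},\imroot{D_0}$ so the factor $\mroot{D_1}\imroot{D_0} = \sqrt{\lambda}\,I_m$ passes cleanly through and leaves $A = \sqrt{\lambda}\,\tilde{I}_m$, and making explicit that the equality of the three values is what licenses promoting the plan of Theorem~\ref{thm:gaussian} from an optimizer of \eqref{eq:gromovrestrictedgaussian} to an optimizer of \eqref{eq:gromovgeneral1}.
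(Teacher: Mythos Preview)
Your proposal is correct and follows essentially the same route as the paper: it uses the Cauchy--Schwarz defect identity \eqref{eq:cauchy} together with $\beta = \lambda\alpha$ to show $GGW_2^2 - LGW_2^2 = 0$, invokes the sandwich of Proposition~\ref{prop:frame}, and then specializes \eqref{eq:optimalmapgromov} and \eqref{eq:GGW}. Your write-up is in fact more explicit than the paper's (which simply says ``by reinjecting'') in carrying out the substitutions for $A$ and the value.
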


\begin{proof} 
From \eqref{eq:cauchy}, we have
\begin{equation}
GGW_2^2(\m,\n) - LGW2^2(\m,\n) = 8\left(\|D_0\|_{\spa{F}}\|D_1\|_{\spa{F}} - \text{tr}(D_0D_1)\right).
\end{equation}
Noting $ \alpha \in \reel^m $ and $ \beta \in \reel^m $ the eigenvalues vectors of $ D_0 $ and $ D_1 $, it comes
\begin{equation}
GGW_2^2(\m,\n) - LGW2^2(\m,\n) = 8(\|\alpha\|\|\beta\| - \alpha^T\beta).
\end{equation}
Since it exists $ \lambda \geq 0 $ such that $ D_1 = \lambda D_0 $, we have $ \beta = \lambda \alpha $, and so
$ \alpha^T\beta = \|\alpha\|\|\beta\| $. Thus $ GGW_2^2(\m,\n) - LGW_2^2(\m,\n) = 0 $ and using Proposition \ref{prop:frame}, we get that $ GW_2^2(\m,\n) = GGW_2^2(\m,\n) = LGW^2_2(\m,\n) $. We get
\eqref{eq:Tparticuliar} and \eqref{eq:GW2particuliar} by simply reinjecting in \eqref{eq:optimalmapgromov} and \eqref{eq:GGW}.
\end{proof}

\begin{coro}
Let $ \m = \mathcal{N}(m_0,\sigma_0^2) $ and $ \n = \mathcal{N}(m_1,\sigma_1^2) $ be two Gaussian measures on $ \mathbb{R} $. Then 
\begin{equation}
GW_2^2(\m, \n) =  12\left(\sigma_0^2 - \sigma_1^2\right)^2,
\end{equation}
and the optimal transport plan $ \op^* $ has the form $ (I_1,T)\#\m $ with $ T $ affine of the form:
\begin{equation}\
\forall x \in \reel, \ T(x) = m_1 \pm \frac{\sigma_1}{\sigma_0}(x - m_0).
\end{equation}
Thus, the solution of $ W^2_2(\m,\n) $ is also solution of $ GW_2^2(\m,\n) $.
\end{coro}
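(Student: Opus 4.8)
The plan is to obtain this corollary as the immediate one-dimensional specialization of the preceding Proposition. When $m = n = 1$, the covariance matrices $\cov_0 = \sigma_0^2$ and $\cov_1 = \sigma_1^2$ are positive scalars, their diagonalizations are trivial (one may take $P_0 = P_1 = 1$, $D_0 = \sigma_0^2$, $D_1 = \sigma_1^2$), and the eigenvalue vectors $\alpha,\beta \in \reel$ reduce to single numbers. The key observation is that the proportionality hypothesis of the Proposition, namely the existence of $\lambda \geq 0$ with $D_1 = \lambda D_0$, is automatically satisfied here: since $\cov_0$ is non-singular we have $\sigma_0 > 0$, so one may simply take $\lambda = \sigma_1^2/\sigma_0^2$. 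This reflects the fact, already noted in the discussion above, that any two scalars are collinear, so the Cauchy--Schwarz gap between $GGW_2^2$ and $LGW_2^2$ vanishes and the three quantities $LGW_2^2$, $GW_2^2$, $GGW_2^2$ coincide.

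First I would substitute $\lambda = \sigma_1^2/\sigma_0^2$ into formula \eqref{eq:GW2particuliar}. With $m = 1$ one has $\textup{tr}(\cov_0) = \sigma_0^2$ and $\|\cov_0\|_{\spa{F}} = \sigma_0^2$, so the bracket equals $4\sigma_0^4 + 8\sigma_0^4 = 12\sigma_0^4$; combined with $(\lambda - 1)^2 = (\sigma_1^2 - \sigma_0^2)^2/\sigma_0^4$ this yields $GW_2^2(\m,\n) = 12(\sigma_0^2 - \sigma_1^2)^2$. Next I would substitute into \eqref{eq:Tparticuliar}: with $P_0 = P_1 = 1$, $\tilde{I}_1 = \pm 1$, and $\sqrt{\lambda} = \sigma_1/\sigma_0$, the affine map becomes $T(x) = m_1 \pm (\sigma_1/\sigma_0)(x - m_0)$, which gives the optimal plan $(I_1,T)\#\m$.

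Finally, for the claim that the $W_2$ solution is also a $GW_2$ solution, I would specialize the Gaussian Monge map \eqref{eq:mongemapgaussian} to dimension one: all the matrix square roots collapse to ordinary square roots, and a short computation gives $T_{W_2}(x) = m_1 + (\sigma_1/\sigma_0)(x - m_0)$. This is exactly the branch of the $GW_2$ map obtained by choosing the $+$ sign, so the $W_2$ optimal map is among the optimal maps for $GW_2$. I do not expect any genuine obstacle in this argument: it is a routine specialization, and the only point deserving care is to justify that the proportionality condition of the Proposition holds for free in dimension one, equivalently that the non-degeneracy hypothesis $\sigma_0 \neq 0$ already assumed is all that is needed.
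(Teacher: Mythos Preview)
Your proposal is correct and is precisely the approach the paper takes: the corollary is stated without a separate proof because it is the direct one-dimensional specialization of the preceding Proposition, obtained by setting $\lambda=\sigma_1^2/\sigma_0^2$ and substituting into \eqref{eq:GW2particuliar} and \eqref{eq:Tparticuliar}. Your additional observation that the $W_2$ map coincides with the $+$ branch is the right justification for the final sentence of the corollary.
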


\subsection{Case of degenerate measures}
In all the results exposed above, we have supposed $ \cov_0 $ non-singular, which means that $ \m $ is not degenerate. Yet, if $ \cov_0 $ is not full rank, one can easily extend the previous results thanks to the following proposition.

\begin{prop}\label{prop:singular}
Let $ \m = \mathcal{N}(0,D_0) $ and $ \n = \mathcal{N}(0,D_1) $ be two centered Gaussian measures on $ \reel^m $ and $ \reel^n $ with diagonal covariance matrices $ D_0 $ and $ D_1 $ with eigenvalues in decreasing order.
We denote $ r = \text{rk}(D_0)$ the rank of $ D_0 $ and we suppose that $ r < m $. Let us define $ P_r = \begin{pmatrix} I_r & 0_{r,m-r} \end{pmatrix} \in \reel^{r \times m} $. Then $ GW_2^2(\m,\n) = GW_2^2(P_r\#\m,\n) $, $ GGW_2^2(\m,\n) = GGW_2^2(P_r\#\m,\n)$, and $ LGW_2^2(\m,\n) = LGW_2^2(P_r\#\m,\n) $.
\end{prop}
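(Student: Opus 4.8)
The plan is to treat the two genuine optimization problems $GW_2^2$ and $GGW_2^2$ by a single change-of-variables bijection on transport plans, and to settle $LGW_2^2$ directly from its closed form \eqref{eq:LGW}. The geometric fact driving everything is the following: since $D_0$ is diagonal with eigenvalues sorted in decreasing order and $\text{rk}(D_0) = r < m$, its last $m-r$ diagonal entries vanish, so any $X \sim \m$ satisfies $X = P_r^T P_r X$ almost surely, i.e. $\m$ is carried by the coordinate subspace $V = \{x \in \reel^m : x_{r+1} = \dots = x_m = 0\}$. On $V$ the projection $P_r$ is a linear isometry onto $\reel^r$ with inverse $x \mapsto P_r^T x$ (indeed $P_r P_r^T = I_r$, and $P_r^T P_r$ restricts to the identity on $V$), and $P_r\#\m = \mathcal{N}(0, D_0^{(r)})$ is a non-degenerate Gaussian on $\reel^r$.

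For $GW_2^2$ and $GGW_2^2$ I would introduce the map $\Phi : \op \mapsto (P_r, I_n)\#\op$ from $\Pi(\m,\n)$ to $\Pi(P_r\#\m, \n)$ together with the candidate inverse $\Psi : \tilde{\op} \mapsto (P_r^T, I_n)\#\tilde{\op}$. There are two things to check: first, that $\Phi$ and $\Psi$ land in the correct sets of plans and are mutually inverse, the only nontrivial point being that every $\op \in \Pi(\m,\n)$ is supported on $V \times \reel^n$ since its first marginal is $\m$, whence $(P_r^T P_r, I_n)\#\op = \op$; and second, that $\Phi$ and $\Psi$, being push-forwards by linear maps, preserve Gaussianity and hence restrict to a bijection between the Gaussian plans. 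Because $\|x - x'\|_{\reel^m}^2 = \|P_r x - P_r x'\|_{\reel^r}^2$ for all $x, x' \in V$, the integrand of \eqref{eq:gromovgeneral1} takes the same value on $\op$ and on $\Phi(\op)$; taking the infimum over all plans gives $GW_2^2(\m,\n) = GW_2^2(P_r\#\m,\n)$, and the infimum over Gaussian plans gives the corresponding identity for \eqref{eq:gromovrestrictedgaussian}, namely $GGW_2^2(\m,\n) = GGW_2^2(P_r\#\m,\n)$.

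For $LGW_2^2$ I would argue purely from \eqref{eq:LGW}. This closed form depends on $(D_0, D_1)$ only through $\text{tr}(D_0)$, $\|D_0\|_{\spa{F}}$, the leading block $D_0^{(n)}$, and the tail $\|D_0\|_{\spa{F}}^2 - \|D_0^{(n)}\|_{\spa{F}}^2$. Writing $D_0 = \text{diag}(\lambda_1,\dots,\lambda_r,0,\dots,0)$ and $D_0^{(r)} = \text{diag}(\lambda_1,\dots,\lambda_r)$, deleting the appended zeros changes none of these: the trace and Frobenius norm are insensitive to zero eigenvalues, and (using $n \le r$) the block $D_0^{(n)}$ and the tail sum $\sum_{i=n+1}^r \lambda_i^2$ are literally the same whether read off $D_0$ or off $D_0^{(r)}$. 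Substituting into \eqref{eq:LGW} then gives $LGW_2^2(\m,\n) = LGW_2^2(P_r\#\m,\n)$; the only remaining subtlety, should $\n$ have larger effective dimension than $P_r\#\m$, is to match the dimension convention of \eqref{eq:LGW}, which is handled in the same manner.

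The step I expect to be the main obstacle is the first verification above: carefully justifying that $\Phi$ and $\Psi$ respect the marginal constraints and are genuine inverses, which hinges on the support statement $\op(V \times \reel^n) = 1$ for every admissible plan, together with the preservation of the Gaussian structure in both directions. Once this measure-theoretic bookkeeping is in place, the invariance of the transport cost (for $GW_2^2$ and $GGW_2^2$) and of the spectral formula (for $LGW_2^2$) follow immediately.
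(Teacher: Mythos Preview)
Your proposal is correct and follows essentially the same approach as the paper: both arguments exploit that $\m$ is supported on the coordinate subspace $V=\{x_{r+1}=\dots=x_m=0\}$, use the pair of pushforwards $(P_r,I_n)\#$ and $(P_r^T,I_n)\#$ to pass between $\Pi(\m,\n)$ and $\Pi(P_r\#\m,\n)$, invoke the isometry $\|x-x'\|_{\reel^m}=\|P_r x-P_r x'\|_{\reel^r}$ on $V$, and read off the $LGW_2^2$ equality directly from the closed form \eqref{eq:LGW}. The only cosmetic difference is that you package the two pushforwards as an explicit bijection $\Phi,\Psi$ (and note the preservation of Gaussianity for the $GGW_2$ case), whereas the paper phrases it as two separate inequalities; the underlying content is the same.
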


\begin{proof}
For $ r < m $, we denote $ \Gamma_r(\reel^m) $ the set of vectors  $ x = (x_1,\dots,x_m)^T $ of $ \reel^m $ such that $ x_{r + 1} = \dots = x_{m} = 0 $. For $ \op \in \Pi(\m,\n) $, one can remark that for any borel set $ A \subset \reel^m \smallsetminus  \Gamma_r(\reel^m) $,
and any borel set
$ B \subset \reel^n $, we have $ \op(A,B) = 0 $ and so 
\begin{equation} 
\begin{split}
GW_2^2(\m,\n) &= \inf_{\op \in \Pi(\m,\n)}\int_{\reel^m \times \reel^n}\int_{\reel^m \times \reel^n}(\|x -x'\|^2_{\reel^m} - \|y -y'\|^2_{\reel^m})^2d\op(x,y)d\op(x',y') \\
& =\inf_{\op \in \Pi(\m,\n)}\int_{\Gamma_r(\reel^m) \times \reel^n}\int_{\Gamma_r(\reel^m) \times \reel^n}(\|x -x'\|^2_{\reel^m} - \|y -y'\|^2_{\reel^m})^2d\op(x,y)d\op(x',y') \\
&=\inf_{\op \in \Pi(\m,\n)}\int_{\Gamma_r(\reel^m)\times\reel^n}\int_{\Gamma_r(\reel^m)\times\reel^n}(\|P_r(x -x')\|^2_{\reel^r} - \|y -y'\|^2_{\reel^m})^2d\op(x,y)d\op(x',y') 
\end{split}
\end{equation}
Now, observe that for $\op \in \Pi(\m,\n)$, $(P_r,I_n)\#\op \in \Pi(P_r\#\m,\n)$. It follows that
\begin{equation} 
\begin{split}
GW_2^2(\m,\n) &\le \inf_{\op \in \Pi(P_r\#\m,\n)}\int_{\reel^r\times\reel^n}\int_{\reel^r\times\reel^n}(\|x -x'\|^2_{\reel^r} - \|y -y'\|^2_{\reel^m})^2d\op(x,y)d\op(x',y') \\
&= GW_2^2(P_r\#\m,\n).
\end{split}
\end{equation}
Conversely, since $\m$ has no mass outside of $\Gamma_r(\reel^m)$, $P_r^T\#P_r\#\m = \m$, which implies that for $\op \in \Pi(P_r\#\m,\n)$, $(P_r^T,I_n)\#\op \in \Pi(\m,\n)$. It follows that \begin{equation} 
\begin{split}
GW_2^2(P_r\#\m,\n)& =  \inf_{\op \in \Pi(P_r\#\m,\n)}\int_{\reel^r\times\reel^n}\int_{\reel^r\times\reel^n}(\|x -x'\|^2_{\reel^r} - \|y -y'\|^2_{\reel^m})^2d\op(x,y)d\op(x',y') \\
&=  \inf_{\op \in \Pi(P_r\#\m,\n)}\int_{\reel^r\times\reel^n}\int_{\reel^r\times\reel^n}(\|P_r^T(x -x')\|^2_{\reel^r} - \|y -y'\|^2_{\reel^m})^2d\op(x,y)d\op(x',y') \\
&\le\inf_{\op \in \Pi(\m,\n)}\int_{\reel^m\times\reel^n}\int_{\reel^m\times\reel^n}(\|x -x'\|^2_{\reel^r} - \|y -y'\|^2_{\reel^m})^2d\op(x,y)d\op(x',y') \\
&\le GW_2^2(\m,\n) .
\end{split}
\end{equation}

The exact same reasoning can be made in the case of $ GGW_2 $. Morover, it can be easily seen when looking at \eqref{eq:LGW} that $ LGW_2^2(\m,\n) = LGW_2^2(P_r\#\m,\n) $. 
\end{proof}

Thus, when $ \cov_0 $ is not full rank, one can apply Proposition \ref{prop:singular} and consider directly the Gromov-Wasserstein distance between the projected (non-degenerate) measure $ P_r\#\m $ on $ \reel^r $ and $ \n $ and so Proposition \ref{prop:frame} still holds when $ \m $ is degenerate.

In the case of $ GGW_2 $, an explicit optimal transport plan can still be exhibited. In the following, we denote $  r_0  $ and $ r_1 $ the ranks of $ \cov_0 $ and $ \cov_1 $, and we suppose without loss of generality 
that $ r_0 \geq r_1 $, but this time not necessarily that $ m \geq n $.  If $ \m = \mathcal{N}(m_0,\cov_0) $ and $ \n = \mathcal{N}(m_1,\cov_1) $ are two Gaussian measures on $ \reel^m $ and $ \reel^n $, and $ (P_0,D_0) $ and $ (P_1,D_1) $
are the respective diagonalizations of $ \cov_0 (= P_0D_0P_0^T) $ and $ \cov_1 (= P_1D_1P_1^T) $ which sort the eigenvalues in decreasing order, an optimal transport plan which achieves $ GGW_2(\m,\n) $ is of the form $ \op^* = (I_m,T)\#\m $ with 
\begin{equation}
\forall x \in \reel^m, T(x) = m_1 + P_1AP_0^T(x - m_0),
\end{equation}
where $ A \in \reel^{n \times m} $ is of the form
$$ A = \begin{pmatrix}  \tilde{I}_{r_1}(D_1^{(r_1)})^{\frac{1}{2}}(D_0^{(r_1)})^{-\frac{1}{2}} & 0_{r_1,m - r_1} \\ 0_{n - r_1, r_1}  & 0_{n - r_1, m - r_1} \end{pmatrix}, $$ 
where $ \tilde{I}_{r_1} $ is any matrix of the form $ \text{diag}((\pm 1)_{i \leq r_1}) $. 

\section{Behavior of the empirical solution}\label{sec:6}

\begin{figure}[!h]
    \centering
    \begin{subfigure}[b]{0.49\textwidth}
        \includegraphics[width=\textwidth]{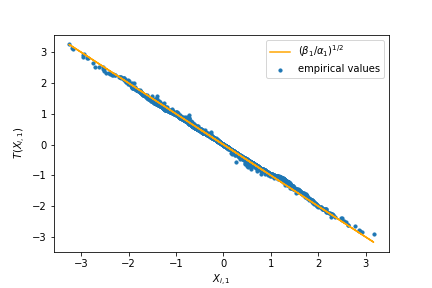}
    \end{subfigure}
    \begin{subfigure}[b]{0.49\textwidth}
        \includegraphics[width=\textwidth]{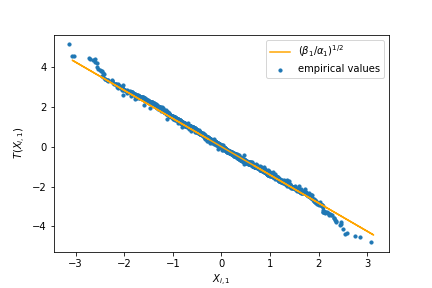}
    \end{subfigure}
    \begin{subfigure}[b]{0.49\textwidth}
        \includegraphics[width=\textwidth]{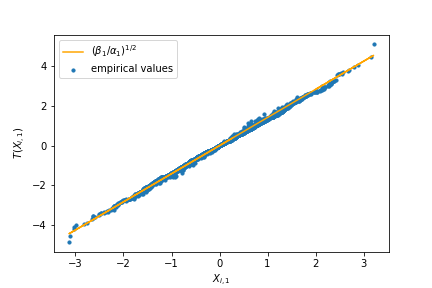}
    \end{subfigure}
    \begin{subfigure}[b]{0.49\textwidth}
        \includegraphics[width=\textwidth]{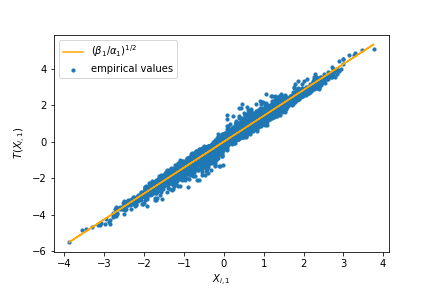}
    \end{subfigure}
    \begin{subfigure}[b]{0.49\textwidth}
        \includegraphics[width=\textwidth]{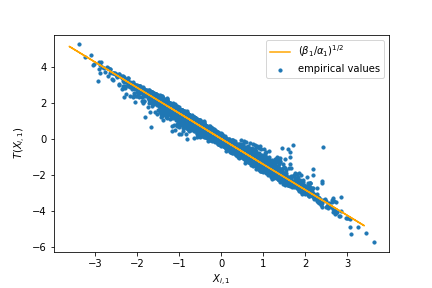}
    \end{subfigure}
    \begin{subfigure}[b]{0.49\textwidth}
        \includegraphics[width=\textwidth]{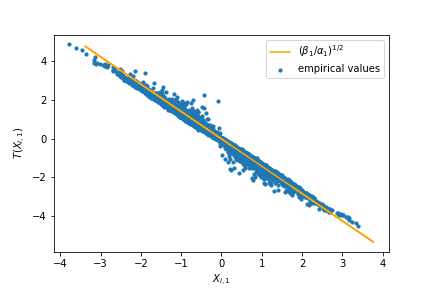}
    \end{subfigure}
    \caption{plot of the first coordinate of samples $ Y_i $ in fonction of the
    the first coordinate of their assigned samples $ X_j $  (blue dots) and line of equation $ y = \pm\sqrt{\beta}x $  (orange line) for $ k = 2000 $, $ \alpha = (1,0.1)^T $ and $ \beta = 2 $ (top left), $ k = 2000 $, $ \alpha = (1,0.1)^T $ and $ \beta = (2,0.3)^T $ (top right), $ k = 2000 $, $ \alpha = (1,0.1,0.01)^T $ and 
    $ \beta = 2 $ (middle left),
    $ k = 7000 $, $ \alpha = (1,0.3)  $ and $\beta = 2$ (middle right), $ k = 7000 $, $ \alpha = (1,0.1)^T $, and $ \beta = (2,1)^T $ (bottom left), and $ k = 7000 $ and $ \alpha = (1,0.3,0.1)$ and $ \beta = 2 $
    (bottom right).}
    \label{fig:expe4}
\end{figure}

To complete the previous study, we perform a simple experiment to illustrate the  behavior of the  solution of the Gromov Wasserstein problem.    
In this experiment, we draw independently $ k $ samples $ (X_j)_{j \leq k} $ and $ (Y_i)_{i \leq k} $ from respectively $ \m = \mathcal{N}(0,\text{diag}(\alpha))$  and $ \n = \mathcal{N}(0,\text{diag}(\beta)) $ with 
$ \alpha \in \reel^m $ and $ \beta \in \reel^n $. Then we compute the Gromov-Wasserstein distance between the two histograms $ X $ and  $ Y $ with the algorithm proposed in \cite{algo} using the 
Python Optimal Transport library \footnote{The library is accessible here: https://pythonot.github.io/index.html}. In Figure \ref{fig:expe4}, we plot the first coordinates of the samples $ Y_i $ in fonction of the
the first coordinate of the samples $ X_j $ they have been assigned to by the algorithm (blue dots). We draw also the line of equation $ y = \pm\sqrt{\beta}x $ to compare with the theorical solution 
of the Gaussian restricted problem (orange line) for $ k = 2000 $, $ \alpha = (1,0.1)^T $ and $ \beta = 2 $ (top left), $ k = 2000 $, $ \alpha = (1,0.1)^T $ and $ \beta = (2,0.3)^T $ (top right), $ k = 2000 $, $ \alpha = (1,0.1,0.01)^T $ and 
$ \beta = 2 $ (middle left),
$ k = 7000 $, $ \alpha = (1,0.3)  $ and $\beta = 2$ (middle right), $ k = 7000 $, $ \alpha = (1,0.1)^T $, and $ \beta = (2,1)^T $ (bottom left), and $ k = 7000 $ and $ \alpha = (1,0.3,0.1)$ and $ \beta = 2 $
(bottom right). Observe that the empirical solution seems to be behaving exactly in the same way as the theoretical solution exhibited in theorem \ref{thm:gaussian} as soon as $ \alpha $ and $ \beta $ are close to be collinear. However, when $ \alpha $ and $ \beta $ are further away from  collinearity, determining the behavior of the empirical solution becomes more complex. Solving Gromov-Wasserstein numerically, even approximately, is a particularly hard task, therefore  we cannot conclude if the empirical solution does not behave in 
the same way as the theorical solution exhibited in theorem \ref{thm:gaussian} or if the algorithm has not converged in these more complex cases. This second assumption seems to be more likely because it seems that increasing the number of points $ k $ 
reduces the gap between the blue dots and the orange line. Thus, we conjecture that the optimal plan which achieves $ GGW_2(\m,\n) $ is also solution of the non-restricted problem $ GW_2(\m,\n) $ and that $ GW_2(\m,\n) = GGW_2(\m,\n) $.

\section{Conclusion}

In this paper, we have exhibited lower and upper bounds for the Gromov-Wasserstein distance (with a squared ground distance)  between Gaussian measures living on different Euclidean spaces. We have also studied the tightness of the provided bounds, both theoretically and numerically.   The upper bound is obtained through the study of the problem with the additional restriction that the optimal plan itself is Gaussian. We have shown that this particular case has  a very simple closed-form solution, which can be described as first performing PCA on both distributions and then deriving the optimal linear plan between these aligned distributions.  We conjecture that the linear solution exhibited when adding this restriction might also be the solution in more general cases.

\section{Appendix: proof of the lemmas}\label{sec:appendix}
\subsection{Proof of Lemma \ref{lemme:maxnorm2}}

\begin{proof}
The proof is inspired from the proof of Equation \eqref{eq:w2gaussian} provided in \cite{OTGaussian}. We want to maximize $ \text{tr}(K^TK) $ with the constraint that $ \Sigma $ is semi-definite positive. Let $ S = \cov_1 - K^T\cov_0^{-1}K $ (Schur complement). Problem \eqref{eq:maxnorm2} can be written in the following way
\begin{equation}\label{eq:order2matrix}
\min_{S \in S^+_n(\mathbb{R}) } - \text{tr}(K^TK).
\end{equation}
For a given $ S $, the set of feasible $ K $ is the set of $ K $ such that $ K^T\cov_0^{-1}K = \cov_1 - S $. Since $ \cov_0 \in S^{++}_m(\mathbb{R}) $,  $ K^T\cov_0^{-1}K \in S^{+}_n(\mathbb{R}) $ and so $ \cov_1 - S \in S^{+}_n(\mathbb{R}) $. We note $ r $ the rank of $ K^T\cov_0^{-1}K $. One can observe that $$ r \leq n \leq m, $$
where the left-hand side inequality comes from the fact that $ \text{rk}(AB) \leq \min\{\text{rk}(A),\text{rk}(B)\}$. Then,  $ \cov_1 - S $ can be diagonalized
\begin{equation}\label{eq:diagonalisation}
\cov_1 - S = K^T\cov_0^{-1}K = U\Lambda^2 U^T = U_r\Lambda^2_rU_r^T,
\end{equation}
with  $ \Lambda^2 = \text{diag}(\lambda_1^2,...,\lambda_r^2,0,...,0)$, $ \Lambda_r^2 = \text{diag}(\lambda_1^2,...,\lambda_r^2) $, and $ U_r \in \mathbb{V}_r(\mathbb{R}^n) := \{M \in \mathbb{R}^{n \times r} | M^TM = I_r \} $ (Stiefel Manifold \cite{james1976topology}) such that  $ U = \begin{pmatrix} U_r  & U_{n-r} \end{pmatrix}$ . 
From \eqref{eq:diagonalisation}, we can deduce that
\begin{equation}
(\cov_0^{-\frac{1}{2}}KU_r\Lambda_r^{-1})^T\cov_0^{-\frac{1}{2}}KU_r\Lambda_r^{-1} = I_r. 
\end{equation}
We can set $ B_r = \cov_0^{-\frac{1}{2}}KU_r\Lambda_r^{-1} $ such that $ B_r \in 
\mathbb{V}_r(\mathbb{R}^m) $. One can deduce that
\begin{equation}
\nonumber KU_r = \cov_0^{\frac{1}{2}}B_r\Lambda_r.
\end{equation}
Moreover, since $ U_{m-r}^TK^T\cov_0^{-1}KU_{m-r} = 0 $ and $\cov_0 \in S_m^{++}(\mathbb{R}) $, it comes that $ KU_{n-r} = 0 $ and so 
\begin{equation}\label{eq:Kexpression}
K = KUU^T = KU_rU_r^T = \cov_0^{\frac{1}{2}}B_r\Lambda_rU_r^T.
\end{equation}
We can write $ \text{tr}(K^TK) $ as a function of $ B_r$ :
\begin{equation}
\begin{split}
\text{tr}(K^TK) &=\text{tr}(U_r\Lambda_rB_r^T\cov_0B_r\Lambda_rU_r^T) \\ &= \text{tr}(U_r^TU_r\Lambda_rB_r^T\cov_0B_r\Lambda_r) \\
&= \text{tr}(\Lambda^2_rB_r^T\cov_0B_r),
\end{split}
\end{equation}
Thus, for a given $ S $, the set of $ K $ such that $ K^T\cov_0^{-1}K = \cov_1 - S $ is parametrized by the $r$-frame $ B_r $. We want to find $ B_r $ which maximizes $ \text{tr}(K^TK) $ for a given $ S $. This problem can be rewritten:
\begin{equation}\label{eq:QQP}
    \min_{B_r \in \mathbb{V}_r(\mathbb{R}^m)} - \text{tr}(\Lambda^2_rB_r^T\cov_0B_r).
\end{equation}
The following is a readaptation of the proof of the Proposition (3.1) in \cite{quadratic} when $ B_r $ is not a squared matrix. The Lagrangian of problem \eqref{eq:QQP} can be written
\begin{equation}
\nonumber \mathcal{L}(B_r,C) = -\text{tr}(\Lambda^2_rB_r^T\cov_0B_r) + \text{tr}(C(B_r^TB_r - I_r)),
\end{equation} 
where $ C \in S_r(\mathbb{R})  $ is the Lagrange multiplier associated to the constraint $ B_r^TB_r = I_r $ ($C$ is symmetric because $ B_r^TB_r - I_r $ is symmetric). We can then derive the first-order condition

\begin{equation}
\nonumber -2\cov_0B_r\Lambda_r^2 + 2B_rC = 0,
\end{equation}
or equivalently
\begin{equation}
\cov_0B_r\Lambda_r^2B_r^T = B_rCB_r^T.
\end{equation}
Since $ C \in S_r(\mathbb{R})  $, $ B_rCB_r^T \in S_m(\mathbb{R}) $ and $ \cov_0B_r\Lambda_r^2B_r^T \in S_m(\mathbb{R}) $. We can deduce that $ \cov_0 $ and $ B_r\Lambda_rB_r^T $ commute. Moreover, since $ \cov_0 $ and $ B_r\Lambda_r^2B_r^T $ are both symmetric,  they can be diagonalized in the same basis. Since $ B_r \in \mathbb{V}_r(\mathbb{R}^m)  $, it can be seen as the $ r $ first vectors of an orthogonal basis of $ \mathbb{R}^m $. It means there exists a matrix $ B_{m-r} $ such that
\begin{equation}
\nonumber B_r\Lambda_r^2B_r^T = B\Lambda_m^2B^T,
\end{equation}
where $ \Lambda^2_m \in \mathbb{R}^{m \times m} = \text{diag}(\lambda_1^2,...,\lambda_r^2,0,...,0) $ and $ B = \begin{pmatrix} B_r & B_{m-r} \end{pmatrix} $. Thus the eigenvalues of  $ B_r\Lambda_r^2B_r^T $ are exactly the eigenvalues of $ \Lambda^2_m $. Since $ \cov_0 $ and $ B_r\Lambda_r^2B_r^T $ can be diagonalized in the same basis, we get that $ \text{tr}(\Lambda_r^2B_r^T\cov_0B_r) = \text{tr}(\cov_0B_r\Lambda_r^2B_r^T) = \text{tr}(D_0\tilde{\Lambda}_m)$ where $ \tilde{\Lambda}_m $ is a diagonal matrix with the same eigenvalues as $ \Lambda_m $, but in a different order. Now, it can be easily seen that the optimal value of \eqref{eq:QQP} is reached when $B_r $ is a permutation matrix which sorts the eigenvalues of $ \Lambda_m $ in decrasing order.

Thus, for a given $S$, the maximum value of $ \text{tr}(K^TK) $ is 
$ \text{tr}(D_0\tilde{\Lambda}_m(S))$. We can now establish for which $ S,  \ \text{tr}(D_0\tilde{\Lambda}_m(S)) $ is optimal. For a given $ S $, we denote $ \lambda_1, ..., \lambda_n $ the eigenvalues of $ \cov_1 - S $ and $ \beta_1,...,\beta_n $ the eigenvalues of $ \cov_1 $ ordered in decreasing order. Since $ S \in S_n^+(\mathbb{R}) $,  $ \forall x \in \mathbb{R}^n $, the following inequality holds: 
\begin{equation}
x^T(\cov_1 - S)x \leq x^T\cov_1x.
\end{equation}
This inequality still holds when restricted to any subspace of $ \reel^n $. Using the Courant-Fischer theorem, we can conclude that:
\begin{equation}
\forall i \leq n, \ \lambda_i \leq \beta_i.
\end{equation}
Thus, the optimal value of $ \text{tr}(D_0\tilde{\Lambda}_m(S)) $ is reached when $ S = 0 $ and $ \tilde{\Lambda}_m(0) = \begin{pmatrix} D_1 & 0 \\ 0 & 0 \end{pmatrix} $ and so $ \text{tr}(D_0\tilde{\Lambda}_m(0)) = \text{tr}(D_0^{(n)}D_1) $. Let $ A = \begin{pmatrix} \tilde{I}_n(D_0^{(n)})^\frac{1}{2}D_1^\frac{1}{2} \\ 0_{m-n,n} \end{pmatrix} $ with $\tilde{I}_n $ of the form $ \text{diag}((\pm 1)_{i \leq n})) $. It can be easily verified that  $ A^TD_0^{-1}A = D_1 $ and if $ K^* = P_0^TAP_1  $, $ K^{*T}\cov_0^{-1}K^{*} = P_1^TA^TP_0\cov_0^{-1}P_0^TAP_1 = P_1^TA^TD_0^{-1}AP_1 = P_1^TD_1P_1 = \cov_1 $ and $ K^{*T}K^* $ has the same eigenvalues as $ A^TA $ and  $ \text{tr}(A^TA) = \text{tr}(D_0^{(n)}D_1) $.
\end{proof}

\subsection{Proof of Lemma \ref{lemme:maxnorm1}}
In order to prove lemma \ref{lemme:maxnorm1}, we will use the following lemma, demonstrated by Antreicher and Wolkowicz \cite{quadratic}.

\begin{lemma}\label{lemme:Antreicher}
    \textbf{\textup{(Anstreicher and Wolkowicz, 1998, \cite{quadratic})}}
    Let $\Sigma_0 $ and  $\Sigma_1 $ be two symmetric matrices of size $ n $. We note $ \Sigma_0 = P_0\Lambda_0P_0^T $ and $ \Sigma_1 = P_1\Lambda_1P_1^T $ there respective diagonalization such that the eigenvalues of $ \Lambda_0 $ are sorted in non-increasing order and the eigenvalues of $ \Lambda_1 $ are sorted in increasing order. Then
    \begin{equation}
    \min_{PP^T = I_n} \textup{tr}(\Sigma_0P\Sigma_1P^T) = \textup{tr}(\Lambda_0\Lambda_1),
    \end{equation}
    and it is achieved for $ P^* = P_0P_1^T $.
\end{lemma}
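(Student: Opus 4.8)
The plan is to reduce the minimization over the orthogonal group to a linear program over the set of doubly stochastic matrices, and then to close it with the rearrangement inequality. First I would introduce the change of variables $Q = P_0^TPP_1$, which is a bijection of the orthogonal group onto itself. Inserting the diagonalizations $\Sigma_0 = P_0\Lambda_0P_0^T$ and $\Sigma_1 = P_1\Lambda_1P_1^T$ and using the cyclic invariance of the trace, the objective becomes
\[
\text{tr}(\Sigma_0P\Sigma_1P^T) = \text{tr}(\Lambda_0Q\Lambda_1Q^T),
\]
so it is enough to minimize the right-hand side over orthogonal $Q$, where now both factors are diagonal.

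Next I would expand this diagonal expression coordinatewise. Writing $a_i = (\Lambda_0)_{ii}$ and $b_j = (\Lambda_1)_{jj}$, a direct computation gives
\[
\text{tr}(\Lambda_0Q\Lambda_1Q^T) = \sum_{i,j} a_ib_j\,Q_{ij}^2.
\]
The crucial observation is that the matrix $S = (Q_{ij}^2)_{i,j}$ is doubly stochastic for every orthogonal $Q$, since each row and each column of $Q$ is a unit vector. Consequently the minimum over orthogonal $Q$ is bounded below by the minimum of the linear functional $S \mapsto \sum_{i,j}a_ib_jS_{ij}$ over the whole Birkhoff polytope of doubly stochastic matrices.

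I would then invoke the Birkhoff--von Neumann theorem: a linear functional on the Birkhoff polytope attains its minimum at a vertex, that is, at a permutation matrix, so the bound equals $\min_{\sigma}\sum_i a_ib_{\sigma(i)}$ over permutations $\sigma$. Since $(a_i)$ is arranged in non-increasing order and $(b_j)$ in non-decreasing order, the rearrangement inequality shows that this minimum is achieved by the identity permutation and equals $\sum_i a_ib_i = \text{tr}(\Lambda_0\Lambda_1)$. To conclude I would check that this lower bound is genuinely attained inside the orthogonal group: the choice $Q = I$ yields $S = I$, realizing the identity permutation, and undoing the change of variables gives $P^* = P_0P_1^T$, which is precisely the claimed minimizer.

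The main point requiring care is the tightness of the relaxation. The set $\{(Q_{ij}^2)_{i,j} : Q \text{ orthogonal}\}$ is only a (generally non-convex) subset of the doubly stochastic matrices, so enlarging the feasible set to the full Birkhoff polytope could in principle lower the value. This causes no loss here because the minimizing vertex is a permutation matrix $P$, which is itself orthogonal and satisfies $P_{ij}^2 = P_{ij}$; hence the optimal doubly stochastic matrix already lies in the image of the squaring map, so the lower bound is attained and the two minima coincide. I would state this observation explicitly rather than treating the relaxation as automatically tight.
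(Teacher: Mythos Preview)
The paper does not actually prove this lemma: it is stated as a cited result from Anstreicher and Wolkowicz and is used as a black box inside the proof of Lemma~\ref{lemme:maxnorm1}. So there is no ``paper's own proof'' to compare against.

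Your argument is correct and is essentially the classical proof of this trace inequality. The reduction via $Q=P_0^TPP_1$, the expansion $\textup{tr}(\Lambda_0Q\Lambda_1Q^T)=\sum_{i,j}a_ib_jQ_{ij}^2$, the relaxation to the Birkhoff polytope via the observation that $(Q_{ij}^2)$ is doubly stochastic, and the use of Birkhoff--von~Neumann together with the rearrangement inequality are all standard and sound. Your explicit discussion of why the relaxation is tight (the minimizing vertex is a permutation matrix, hence already orthogonal with $P_{ij}^2=P_{ij}$) is exactly the point that makes the argument work, and it is good that you spell it out rather than leaving it implicit.
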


\begin{proof}[Proof of Lemma \ref{lemme:maxnorm1}]
We proceed in the same way as before: first, we derive the expression of the optimal value for a given $ S = \cov_1 - K^T\cov_0^{-1}K $, then we determine for which $ S $ this expression is maximum. The start of the proof is exactly the same as the proof of \eqref{lemme:maxnorm2} until formula \eqref{eq:Kexpression}. We diagonalize $ \cov_1 - S = K^T\cov_0^{-1}K = U_r\Lambda_rU_r^T $ where $ r$ is the rank of $ K^T\cov_0^{-1}K$, then we set $ B_r  = \cov_0^{-\frac{1}{2}}KU_r\Lambda_r^{-1}$ while observing that $ B_r \in \mathbb{V}_r(\mathbb{R}^m) $ and we deduce that $ K = \cov_0^{\frac{1}{2}}B_r\Lambda_rU_r^T $. By reinjecting this expression, it comes that
\begin{equation} \text{tr}(KA) = \text{tr}(A^TK^T) = \text{tr}(A^TU_r\Lambda_rB_r^T\cov_0^{\frac{1}{2}}) = \text{tr}(\cov_0^{\frac{1}{2}}A^TU_r\Lambda_rB_r^T). \end{equation}
For a given $ S $, the problem of finding the optimal value is parametrized by $ B_r $ and is:
\begin{equation}
\min_{B_r \in \mathbb{V}_r(\mathbb{R}^m)} - \text{tr}(\cov_0^{\frac{1}{2}}A^TU_r\Lambda_rB_r^T).
\end{equation}
The Lagrangian of this problem can be written:
\begin{equation}
\mathcal{L}(B_r,C) = -\text{tr}(\cov_0^{\frac{1}{2}}A^TU_r\Lambda_rB_r^T) + \text{tr}(C(B_r^TB_r - I_r)),
\end{equation} 
where $ C \in S_r(\mathbb{R})  $ is the Lagrangian multiplier associated to the constraint $ B_r^TB_r = I_r $. We can then derive the first-order condition:
\begin{equation}
\nonumber -\cov_0^{\frac{1}{2}}A^TU_r\Lambda_r + 2B_rC = 0,
\end{equation}
or equivalently:
\begin{equation}
\nonumber \cov_0^{\frac{1}{2}}A^TU_r\Lambda_rB_r^T =  2B_rCB_r^T.
\end{equation}
Since $ C \in S_r(\mathbb{R}) $, $ 2B_rCB_r^T \in S_m(\mathbb{R}) $ and $ \cov_0^{\frac{1}{2}}A^TU_r\Lambda_rB_r^T \in S_m(\mathbb{R}) $. Moreover, the rank of $ \cov_0^{\frac{1}{2}}A^TU_r\Lambda_rB_r^T $ is equal to $ 1 $ because $ \text{rk}(A) = 1 $ and $ \text{rk}(\cov_0^{\frac{1}{2}}A^TU_r\Lambda_rB_r^T) = 0 $ would imply that $ \text{tr}(KA) = 0 $, which cannot be the maximum value of our problem. So there exists a vector $ u_m \in \mathbb{R}^m $ such that  
\begin{equation}
\cov_0^{\frac{1}{2}}A^TU_r\Lambda_rB_r^T = u_mu_m^T.
\end{equation}
    Then we can reinject the value $ B_r $ in the expression:
    \begin{equation}
    \begin{split}
    \cov_0^{\frac{1}{2}}A^TU_r\Lambda_rB_r^T &= \cov_0^{\frac{1}{2}}A^TU_r\Lambda_r\Lambda_r^{-1}U_r^TK^T\cov_0^{-\frac{1}{2}} \\
    &= \cov_0^{\frac{1}{2}}A^TU_rU_r^TK^T\cov_0^{-\frac{1}{2}} \\
    &= \cov_0^{\frac{1}{2}}A^TK^T\cov_0^{-\frac{1}{2}},
    \end{split}
    \end{equation}
    where we used the fact that $ K = KUU^T = KU_rU_r^T $ because $ KU_{n-r} = 0 $. We have so on one hand:
    \begin{equation}
    \text{tr}(KA) = \text{tr}(\cov_0^{\frac{1}{2}}A^TK^T\cov_0^{-\frac{1}{2}}) = \text{tr}(u_mu_m^T) = u_m^Tu_m,
    \end{equation}
    and on the other hand:
    \begin{equation}
    \begin{split}
    \cov_0^{\frac{1}{2}}A^TK^T\cov_0^{-\frac{1}{2}}(\cov_0^{\frac{1}{2}}A^TK^T\cov_0^{-\frac{1}{2}})^T &= \cov_0^{\frac{1}{2}}A^TK^T\cov_0^{-1}KAD_0^{\frac{1}{2}} \\ &= \cov_0^{\frac{1}{2}}A^T(\cov_1 - S)A\cov_0^{\frac{1}{2}} \\ &= u_mu_m^Tu_mu_m^T \\
    &= u_m^Tu_mu_mu_m^T,
    \end{split}
    \end{equation}
    and thus
    \begin{equation}
    \text{tr}(\cov_0^{\frac{1}{2}}A^T(\cov_1 - S)A\cov_0^{\frac{1}{2}}) = u_m^Tu_m\text{tr}(u_mu_m^T) = (u_m^Tu_m)^2 = (\text{tr}(KA))^2.
    \end{equation} 
    Then we will determine for which $ S $, $ \text{tr}(\cov_0^{\frac{1}{2}}A^T(\cov_1 - S)A\cov_0^{\frac{1}{2}}) $ is maximum:
    \begin{equation}
    \begin{split}
    \text{tr}(\cov_0^{\frac{1}{2}}A^T(\cov_1 - S)A\cov_0^{\frac{1}{2}}) &= \text{tr}(A\cov_0A^T(\cov_1 - S)) \\ &=  \text{tr}(A\cov_0A^T\cov_1) - \text{tr}(A\cov_0A^TS).
    \end{split}
    \end{equation}
    Let $ B = A\cov_0A^T $. We can observe that $ B \in S^+_n(\mathbb{R}) $ with rank $ 1 $. Moreover, since $ S \in S_n^+(\mathbb{R}) $, it can be diagonalized, and we denote $ S = PDP^T $. As before, we will first determine the value of $ \text{tr}(BS) $ for a given $ D $, then we will determine which $ D $ minimizes $ \text{tr}(BS) $. For a given $ D $, we want the optimal value of
    \begin{equation}
    \min_{PP^T = I_b} \text{tr}(BPDP^T).
    \end{equation}

    Since $ B $ is symmetric with rank 1, it has only one non null eigenvalue which is equal to its trace. Using Lemma \ref{lemme:Antreicher}, we can deduce that
    \begin{equation} 
    \min_{PP^T = I_n} \text{tr}(BPDP^T) = \text{tr}(B)\lambda_n, \end{equation}
    where $ \lambda_n $  is the smallest eigenvalue of $ D $. Since $ S \in S_n^+(\mathbb{R}) $, the smallest possible value for $ \lambda_n $ is $ 0 $. 
    
    If  $ \cov_0 = \textup{diag}(\alpha)$, $ \cov_1 = \textup{diag}(\beta)$, it can be easily seen that $ \text{tr}(\mathbb{1}_{n,m}\cov_0\mathbb{1}_{m,n}\cov_1)  = \text{tr}(\cov_0)\text{tr}(\cov_1)$. Thus,
    if $ K = \frac{\alpha\beta^T}{\sqrt{\text{tr}(\cov_0)\text{tr}(\cov_1)}} = \frac{\cov_0\mathbb{1}_{m,n}\cov_1}{\sqrt{\text{tr}(\cov_0)\text{tr}(\cov_1)}}  $, we can observe
    that 
    \begin{equation}
    \text{tr}(K\mathbb{1}_{n,m}) = \text{tr}(\mathbb{1}_{n,m}K) = \frac{\text{tr}(\mathbb{1}_{n,m}\cov_0\mathbb{1}_{m,n}\cov_1)}{\sqrt{\text{tr}(\cov_0)\text{tr}(\cov_1)}} = \sqrt{\text{tr}(\cov_0)\text{tr}(\cov_1)},
    \end{equation}
    Now we must show that $ S = \cov_1 - K^T\cov_0^{-1}K \in S_n^+(\reel) $. To do so, we will show that $ \forall i \leq n $, the determinant of the principal minor $S^{(i)}$ is positive. We can derive that
    \begin{equation}
    S = \cov_1 - \frac{\beta\alpha^T\cov_0^{-1}\alpha\beta^T}{\text{tr}(\cov_0)\text{tr}(\cov_1)} = \cov_1 - \frac{\beta\beta^T\text{tr}(\cov_0)}{\text{tr}(\cov_0)\text{tr}(\cov_1)} = \cov_1 - \frac{\beta\beta^T}{\text{tr}(\cov_1)}.
    \end{equation}
    Using the matrix determinant lemma, it comes that
    \begin{equation}
    \forall i \leq n, \ \text{det}(S^{(i)}) = \prod_k^i\beta_k\left(1 - \frac{\text{tr}(\cov_1^{(i)})}{\text{tr}(\cov_1)}\right).
    \end{equation}
    Thus, $ \forall i < n$, $ \text{det}(S^{(i)}) > 0 $, and $ \text{det}(S) = 0 $. We conclude that $ S \in S_n^+(\reel) $ and the smallest eigenvalue of $ S $ is 0.
    \end{proof}

\subsection{Proof of Lemma \ref{lemme:innerproduct}}
\begin{proof}
For $m\geq 1$, let $\Gamma_m$ denote the set of vectors $v=(v_1,\ldots, v_m)$ of $\reel^m$ such that $v_1\geq v_2 \geq \ldots \geq v_m\geq 0$ and $\sum_{i=1}^m v_i^2=1$. We want to prove that 
\begin{equation} 
\forall u, v \in \Gamma_m , \quad \sum_{i=1}^m u_i v_i \geq \frac{1}{\sqrt{m}} .
\end{equation}
We proceed by induction on $m$. For $m=1$, it's obviously true since $\Gamma_1=\{1\}$. Assume now $m>1$, and the result true for $m-1$.
Let $u,v\in\Gamma_m$, then using the result for $(u_2,\ldots,u_m)/(\sum_{i=2}^m u_i^2)^{1/2}$ and $(v_2,\ldots,v_m)/(\sum_{i=2}^m v_i^2)^{1/2}$ that both belong to $\Gamma_{m-1}$, we have
\begin{equation}
\begin{split}
  \sum_{i=1}^m u_i v_i  = u_1 v_1 + \sum_{i=2}^m u_i v_i & \geq  u_1 v_1 + \frac{1}{\sqrt{m-1}} \left( \sum_{i=2}^m u_i^2 \right)^{1/2} \left( \sum_{i=2}^m v_i^2 \right)^{1/2} \\
       &  \quad = u_1 v_1 + \frac{1}{\sqrt{m-1}} \sqrt{1-u_1^2}\sqrt{1-v_1^2} .
\end{split}
\end{equation}
Now since $u,v\in\Gamma_m$, we have $u_1,v_1\in[\frac{1}{\sqrt{m}},1]$. Let us denote
$F(u_1,v_1)=u_1 v_1 + \frac{1}{\sqrt{m-1}} \sqrt{1-u_1^2}\sqrt{1-v_1^2}$.
We have for all $v_1\in[\frac{1}{\sqrt{m}},1]$~:
\begin{equation}
F(1,v_1)=v_1\geq \frac{1}{\sqrt{m}} \quad \text{ and } \quad F(\frac{1}{\sqrt{m}},v_1)= \frac{\sqrt{1-v_1^2} + v_1}{\sqrt{m}} \geq \frac{1-v_1^2 + v_1}{\sqrt{m}} \geq \frac{1}{\sqrt{m}} .
\end{equation}
And computing the partial derivative of $F$ with respect to $u_1$, we get
\begin{equation}
\frac{\partial F}{\partial u_1}(u_1,v_1)= v_1 - \frac{u_1\sqrt{1-v_1^2}}{\sqrt{m-1}\sqrt{1-u_1^2}} .
\end{equation}
This is a decreasing function of $u_1$, with value $v_1$ at $u_1=0$ and value that goes to $-\infty$ when $u_1$ goes to $1$. Therefore the function $F(\cdot,v_1)$ on $[0,1]$  is first increasing and then decrasing, showing that
\begin{equation}
\forall u_1\in  [\frac{1}{\sqrt{m}},1], \quad F(u_1,v_1) \geq \min \left( F(\frac{1}{\sqrt{m}},v_1), F(1,v_1) \right) \geq \frac{1}{\sqrt{m}} .
\end{equation}
Finally we thus have proved that 
$$ \sum_{i=1}^m u_i v_i \geq \frac{1}{\sqrt{m}} , $$
and moreover the equality is achieved when the vectors $u$ and $v$ are the vectors $(1,0,\ldots,0)$ and $(\frac{1}{\sqrt{m}}, \frac{1}{\sqrt{m}},\ldots, \frac{1}{\sqrt{m}})$.
\end{proof}

\bibliographystyle{plain}
\bibliography{article}



 






\end{document}